\setlist[enumerate]{wide,label=\emph{(\roman*)}}
\newtheorem{theorem}{Theorem}[section]
\newtheorem{lemma}[theorem]{Lemma}
\newtheorem{proposition}[theorem]{Proposition}
\newtheorem{claim}[theorem]{Claim}
\theoremstyle{definition}
\numberwithin{equation}{section}
\def\psl#1#2{\left(#1,#2\right)}
\def\pshbbk#1#2{\left(#1,#2\right)_{\dot H^1_{{\boldsymbol{\ell}}_k}}}
\def\pshbbun#1#2{\left(#1,#2\right)_{\dot H^1_{{\boldsymbol{\ell}}_1}}}
\def\Nsol{\mathcal N_{\Omega^C}}
\def\bell{\boldsymbol
	\ell}
\begin{document}
	
	\title[Multi-solitons for critical wave equation]{On multi-solitons for the energy-critical   wave equation in  dimension 5}
	
	\author{XU YUAN}
	
	\address{CMLS, \'Ecole polytechnique, CNRS, Institut Polytechnique de Paris, F-91128 Palaiseau Cedex, France.}
	
	\email{xu.yuan@polytechnique.edu}
\begin{abstract}
We construct $K$-solitons of the focusing energy-critical nonlinear wave equation in five-dimensional space, i.e. solutions $u$ of the equation such that
\begin{equation*}
\|\nabla_{t,x}u(t)-\nabla_{t,x}\big(\sum_{k=1}^{K}W_{k}(t)\big)\|_{L^{2}}\to 0\quad \mathrm{as}\ t\to \infty,
\end{equation*}
where $K\geq 2$ and for any $k\in \{1,\dots,K\}$, $W_{k}$ is the Lorentz transform of the explicit standing soliton $W(x)=(1+|x|^{2}/15)^{-3/2}$, with any speed $\bell_k\in \mathbb{R}^{5}$, $|\bell_{k}|<1$ satisfying $\bell_{k'}\ne \bell_{k}$ for $k'\ne k$, and  an explicit smallness condition. The proof extends the refined method of construction of asymptotic multi-solitons from~\cite{MMwave1,MMwave2}.
\end{abstract}
\maketitle
\section{Introduction}
\subsection{Statement of the main result.} 	In this paper, we are interested in the construction of multi-soliton solutions for the focusing energy-critical wave equation in five-dimensional space:
\begin{equation}\label{wave}
\left\{ \begin{aligned}
&\partial_t^2 u - \Delta u - |u|^{\frac 4{3}} u = 0, \quad (t,x)\in [0,\infty)\times \mathbb{R}^5,\\
& u_{|t=0} = u_0\in \dot H^1,\quad 
\partial_t u_{|t=0} = u_1\in L^2.
\end{aligned}\right.
\end{equation}
Recall that the Cauchy problem for equation \eqref{wave} is locally well-posed in the energy space $\dot H^1\times L^2$, using suitable Strichartz estimates. See \emph{e.g.}~\cite{KM} and references therein.  In particular, for any initial data $(u_{0} , u_{1})\in \dot{H}^{1}\times L^{2}$, there exists a maximal interval of existence
$(T_{-} ,T_{+} )$, $-\infty\le  T_{-} <0< T_{+}\le+\infty$, and a unique solution $(u,\partial_{t} u)\in C
\left(\left(T_{-} ,T_{+}\right);\dot{H}^{1}\times L^{2}\left(\mathbb{R}^{5}\right)\right)\cap L_{\mathrm{loc}}^{\frac{7}{3}}\left( (T_{-} ,T_{+}) ; L^{\frac{14}{3}}(\mathbb{R}^{5})\right)$. For  $\dot H^1\times L^2$ solution, the energy 
$E(u(t),\partial_t u(t))$ and momentum $M(u(t),\partial_t u(t))$ are conserved, where
$$
E(u,v) = \frac 12 \int v^2 + \frac 12 \int |\nabla u|^2 - \frac {3}{10} \int |u|^{\frac {10}{3}},
\quad
M(u,v)=\int v\nabla u.
$$
For a function $u: \mathbb{R}^{5}\to \mathbb{R}$ and $\lambda>0$, we denote
\begin{equation*}
u_{\lambda}(x)=\frac{1}{\lambda^{\frac{3}{2}}}u(\frac{x}{\lambda}).
\end{equation*}
A change of variables shows that
\begin{equation*}
E((u_{0})_{\lambda},\lambda^{-1}(u_{1})_{{\lambda}})= E(u_{0},u_{1} ).
\end{equation*}
Equation~\eqref{wave} is called \emph{energy-critical} since it is invariant under the same scaling: if $(u,\partial_{t}u)$ is a solution of~\eqref{wave} and $\lambda>0$, then
\begin{equation*}
(t,x)\to \left(\frac{1}{\lambda^{\frac{3}{2}}}u\left(\frac{t}{\lambda},\frac{x}{\lambda}\right),\frac{1}{\lambda^{\frac{5}{2}}}\partial_{t}u\left(\frac{t}{\lambda},\frac{x}{\lambda}\right)\right)
\end{equation*}
is also a solution with initial data $\left((u_{
0})_{\lambda},\lambda^{-1}(u_{1} )_{{\lambda}}\right)$
at time $t=0$.

Recall that the function $W$ defined by
\begin{equation}\label{defW}
W(x) = \left( 1+ \frac {|x|^2}{15}\right)^{-\frac{3}2},\quad 
\Delta W + W^{\frac 73}=0 , \quad x\in {\mathbb{R}}^5,
\end{equation}
is a stationary solution of~\eqref{wave}, called here \emph{ground state}, or \emph{soliton}.
By scaling, translation invariances and change of sign, we obtain a family of stationary solutions of~\eqref{wave} defined by $W_{\lambda,x_0,\pm}(x)=\pm \lambda^{-\frac 32}W\left(\lambda^{-1}(x-x_0)\right)$, where $\lambda>0$ and $x_0\in {\mathbb{R}}^5$.

Using the Lorentz transformation, we obtain \emph{traveling waves}.
For $\boldsymbol{\ell}\in {\mathbb{R}}^5$, with $|\boldsymbol{\ell}|< 1$, let
\begin{equation}\label{defWbb}
W_{{\boldsymbol{\ell}}}(x)=W\left(\left(\frac{1}{\sqrt{1-|\boldsymbol{\ell}|^2}}-1\right) \frac{\boldsymbol{\ell}(\boldsymbol{\ell}\cdot x)}{|\boldsymbol\ell|^2} +x\right);
\end{equation}
then $u(t,x)=\pm W_{\bell}(x-\bell t)$ is solution of~\eqref{wave}.

In this paper, we prove the existence of solutions of~\eqref{wave} which display non trivial asymptotic behavior in the nonradial case. Indeed, multi-solitons are canonical objects behaving exactly as the sum of decoupled traveling solitons  as $t\to +\infty$.

\begin{theorem}[Existence of multi-solitons]\label{th:1}
	Let $K\ge 2$. For $k\in \{1,\dots,K\}$, let  $\lambda_k^\infty>0,$ ${\mathbf y}_k^\infty\in {\mathbb{R}}^5$, $\epsilon_k\in \{\pm1\}$, ${\boldsymbol{\ell}}_k=\sum_{i=1}^{5}\ell_{k,i}\mathbf{e}_{i}$ with $|{\boldsymbol{\ell}}_k|<1$.
	Assume that ${\boldsymbol{\ell}}_k\neq {\boldsymbol{\ell}}_{k'}$ for $k\ne k'$, and
	\begin{equation}\label{sc}
\left(\sum_{i=1}^{5}\left(\max_{k\in\{1,\ldots,K\}}\ell^{2}_{k,i}\right)\right)^{\frac{1}{2}}< \frac{3}{5}.
	\end{equation}
	Then, there exist  $T_0>0$ and a solution $u$ of \eqref{wave}   on $[T_0,+\infty)$ in the energy space such that
	\begin{align}\label{eq:th1}
	&\lim_{t\to +\infty} \left\|u(t) 
	-\sum_{k=1}^K \frac{\epsilon_k}{(\lambda_k^\infty)^{3/2}} W_{\boldsymbol{\ell}_k}\left(\frac {\cdot -{\boldsymbol{\ell}_k} t-\mathbf{y}_k^\infty} {\lambda_k^\infty}  \right)
	\right\|_{\dot H^1}  =0,
	\\
	\label{eq:th1bis}
	&  \lim_{t\to +\infty}  \left\|\partial_t u(t) 
	+\sum_{k=1}^K \frac{\epsilon_k}{(\lambda_k^\infty)^{5/2}}\, ({\bell_k}\cdot  \nabla W_{\boldsymbol{\ell}_k})\left(\frac {\cdot-{\boldsymbol{\ell}_k} t-\mathbf{y}_k^\infty} {\lambda_k^\infty}  \right)
	\right\|_{L^2} =0.
	\end{align}
\end{theorem}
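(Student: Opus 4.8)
The plan is to construct the multi-soliton by a compactness argument: for a sequence of times $S_n\to+\infty$, solve \eqref{wave} backwards from $t=S_n$ with data exactly equal to the sum of the $K$ modulated traveling waves evaluated at $t=S_n$, obtaining solutions $u_n$ on intervals $[T_0,S_n]$, and then pass to the limit $n\to\infty$ to produce the desired solution $u$ on $[T_0,+\infty)$. The heart of the matter is a \emph{uniform backwards estimate}: one shows that, for $T_0$ large enough (depending only on the parameters and the smallness constant in \eqref{sc}), each $u_n$ remains, on all of $[T_0,S_n]$, within distance $\lesssim t^{-\alpha}$ (for some $\alpha>0$, say $\alpha$ close to $1$) in $\dot H^1\times L^2$ of the corresponding sum of solitons with suitably modulated parameters. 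Once this bootstrap estimate holds uniformly in $n$, local well-posedness and a diagonal/weak-convergence argument give a limit $u$ satisfying the same $t^{-\alpha}$ bound, which is stronger than \eqref{eq:th1}–\eqref{eq:th1bis}.

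To set up the bootstrap I would first write the approximate solution $\boldsymbol W = \sum_{k=1}^K \epsilon_k (\lambda_k)^{-3/2} W_{\bell_k}\big((\cdot-\bell_k t-\mathbf y_k)/\lambda_k\big)$ with time-dependent modulation parameters $(\lambda_k(t),\mathbf y_k(t))$ (the speeds $\bell_k$ are kept fixed, since the Lorentz boosts are rigid), and decompose $u_n = \boldsymbol W + \varepsilon_n$, imposing $K\cdot 6$ orthogonality conditions on $\varepsilon_n$ to fix the modulation — using the natural scalar product adapted to each boosted soliton (this is the role of the $\dot H^1_{\bell_k}$ inner products for which the paper has reserved notation). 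The key coercivity input is that the linearized energy functional around a single $W$, restricted to the orthogonal complement of the $6$-dimensional manifold of symmetries (scaling, translations) together with the negative direction controlled by the energy/momentum constraints, is positive definite; combined with the almost-orthogonality of well-separated solitons traveling at distinct speeds (here $|\bell_k-\bell_{k'}|\gtrsim 1$ forces spatial separation growing like $t$), this yields a coercive quadratic form for $\varepsilon_n$ modulo exponentially small cross terms. One then differentiates this quadratic form (a virial-type / energy–momentum functional, possibly with a localized momentum correction as in \cite{MMwave1,MMwave2}) in time; the modulation equations control $|\lambda_k'|+|\mathbf y_k'-\bell_k|\lesssim \|\varepsilon_n\| + (\text{interaction})$, the interaction terms between distinct solitons decay polynomially in $t$ (with the rate governed precisely by the slowest relevant direction, which is where the exponent $3/5$ and the anisotropic sum in \eqref{sc} enter), and closing the differential inequality on $[T_0,S_n]$ via a Gronwall/bootstrap argument gives $\|\varepsilon_n(t)\|_{\dot H^1\times L^2}\lesssim t^{-\alpha}$ with a constant independent of $n$ and $S_n$.

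The main obstacle I anticipate is the precise bookkeeping of the interaction between solitons and, relatedly, the source of the condition \eqref{sc}. Because $W$ decays only polynomially (like $|x|^{-3}$ in dimension $5$), the nonlinear interaction between two solitons separated by distance $\sim t$ does not decay exponentially but only like $t^{-\beta}$ for some $\beta$ determined by the decay of $W$ and by how much the Lorentz boost compresses $W$ in the boost direction; the compression factor $(1-|\bell_k|^2)^{-1/2}$ slows the decay in that direction, and demanding that the resulting interaction still decays fast enough to be absorbed in the bootstrap (i.e. $\beta$ large enough relative to the loss coming from the modulation and from the coercivity argument) is exactly what produces the explicit threshold $3/5$ on the anisotropic quantity in \eqref{sc}. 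Carrying this through requires (i) sharp pointwise bounds on $W_{\bell}$ and $\nabla W_{\bell}$ reflecting the anisotropic decay, (ii) a careful estimate of $\int \big(|\boldsymbol W|^{4/3}\boldsymbol W - \sum_k |\cdots|^{4/3}(\cdots)\big)\varepsilon_n$ and of the pure cross terms $\int |W_j|^{\cdots} W_k$, and (iii) checking that the localized-momentum correction used to make the functional monotone does not itself generate terms violating \eqref{sc}. The rest — local well-posedness on $[T_0,S_n]$, existence of modulation parameters by the implicit function theorem, and the final compactness passage to the limit — is standard once the uniform estimate is in hand.
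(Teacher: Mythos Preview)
There are three genuine gaps.

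First, you take as approximate solution the plain sum $\sum_k W_k$. This is not enough: the nonlinear interaction between solitons separated by distance $\sim t$ produces an error of order $t^{-3}$ (Lemma~\ref{le:int}), and with only this approximation the bootstrap does not close at the rate required. The paper uses a \emph{refined} ansatz $\vec{\mathbf W}=\sum_k(\vec W_k + c_k\vec v_k)$, where each $\vec v_k$ solves a non-homogeneous linearized problem absorbing the leading $t^{-3}$ interaction and leaving a remainder of order $t^{-4}$ (Lemma~\ref{pr:AS1}, Lemma~\ref{le:43}). The introduction states explicitly that this refinement is ``decisive for the construction under the smallness condition~\eqref{sc}.''

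Second, the soliton $W$ is linearly \emph{unstable}: $L$ has a negative eigenvalue with eigenfunction $Y$ (Lemma~\ref{l:Q}). For each $k$ this produces exponentially growing/decaying directions $z_k^{\pm}$ that cannot be handled by orthogonality or by ``energy/momentum constraints'' as you suggest. The initial data at $S_n$ must carry free parameters $\zeta_{k,n}^{\pm}$ (see~\eqref{defun}), and a topological shooting argument is needed to select values for which $|z_k^-(t)|^2\leq t^{-7}$ on all of $[T_0,S_n]$ (Claim~\ref{le:modu2}, Lemma~\ref{le:bs2}). Your sentence about the negative direction being controlled by energy/momentum is precisely where the argument would fail.

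Third, your explanation of where \eqref{sc} comes from is incorrect. It does \emph{not} arise from anisotropic decay of $W_{\bell}$ under Lorentz compression or from the soliton interaction estimates. It comes from the time-variation of the localized energy functional
\[
\mathcal H = \int\bigl(|\nabla\varepsilon|^2+\eta^2-2(F(\mathbf W+\varepsilon)-F(\mathbf W)-f(\mathbf W)\varepsilon)\bigr)+2\int(\vec\chi\cdot\nabla\varepsilon)\eta,
\]
where $\vec\chi$ is a vector field equal to $\bell_k$ near the $k$-th soliton and interpolating like $x/t$ in between (see~\eqref{defchiK}); this choice of $\vec\chi$ is the main novelty over~\cite{MMwave1,MMwave2}, which only handled collinear speeds. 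The time derivative of $\mathcal H$ contains a term ${\bf h_1}$ built from derivatives of $\vec\chi$, and the pointwise inequality~\eqref{bf h_1 p.p} needed to bound it by $\frac{6-3\delta}{t}\mathcal H$ holds, region by region, exactly when $\overline\ell<3/5$ (the conditions~\eqref{bd}). So~\eqref{sc} is a constraint coming from the monotonicity argument for $\mathcal H$, not from the decay rate of interactions.
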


The construction of multi-solitons for non integrable dispersive and wave equations has been the subject of several previous works. First, the existence of multi-solitons for the nonlinear Schr\"odinger (NLS)  and the generalized Korteweg-de Vries (gKdV) equations was  studied in the mass critical and subcritical cases by Merle~\cite{Mnls}, Martel~\cite{Ma}, and Martel and Merle~\cite{MMnls}.  The proofs are partly based on standard stability properties of solitary waves (see references in these articles). A key point in~\cite{Ma} and~\cite{MMnls} is the introduction of localized versions of the energy and mass functionals to deal with solutions containing several solitons. Such functionals are reminiscent of Martel, Merle and Tsai~\cite{MMT}, where the stability of the sum of several solitons was studied in the energy space.  For  (gKdV), the technique also allows to prove the uniqueness of multi-solitons~\cite{Ma}, property which is not available for  (NLS). Next, the strategy of these works was extended to the case of exponentially unstable solitons: see C\^ote, Martel and Merle~\cite{CMM} for the construction of multi-solitons for supercritical (gKdV) and (NLS), and Combet~\cite{Com} for a classification result for supercritical (gKdV). In these papers, the exponential instability of each soliton is controlled through a simple topological argument.
For the nonlinear Klein-Gordon equation, the strategy was adapted by C\^ote and Mu\~noz \cite{CMkg} (for real-valued, unstable solitons) and Bellazzini, Ghimenti and Le Coz \cite{BGLC} (for complex-valued, stable solitons). For the water-wave system, see the   work of Ming, Rousset and Tzvetkov~\cite{MRT}.

Closely related to the present work, for the energy-critical wave equation in dimension~$5$, Martel and Merle~\cite{MMwave1} proved the existence of  $K$-solitons for any $K\ge 2$ but only in the case where the  speeds $\{\boldsymbol{\ell}_k\}_{k\in\{1,\cdots,k\}}$ are collinear. 
The Lorentz transform allows to remove this assumption  in the special case  $K= 2$.
Technically, such constructions for the energy critical wave equation are more challenging than for  (gKdV)   and  (NLS) (for which no limitation on the speeds appears)  because of the weak decay of the solitons~\eqref{defW}. Specific localized energy-momentum functionals are introduced to control solitons interactions in this context. In~\cite{MMwave1}, the nature of the localization does not allow non-collinear multi-solitons, even under a smallness condition. The main improvements in the present work are the following: (1) the use of a refined approximate solution from~\cite{MMwave2}; (2) the introduction of a new localized energy functional to deal with non-collinear solitons. The smallness condition~\eqref{sc} is still probably technical and we conjecture that the result holds for any choice of speeds. However, by our method,~\eqref{sc} seems to be the best condition using the refined approximate solution from~\cite{MMwave2}. Moreover, constructing  approximate solutions improving the one in~\cite{MMwave2} seems a difficult task.

The construction of $K$-solitons for the energy critical wave equation is especially motivated by the soliton resolution conjecture, fully solved in the $3$D radial case by Duyckaerts,~Kenig and~Merle~\cite{DKMCJM}, and for a subsequence of time in 3, 4 and 5D by Duyckaerts,~Jia,~Kenig and~Merle~\cite{DJKM}. Indeed, the existence  of such multi-solitons complements the statements in~\cite{DJKM,DKMCJM} by exhibiting examples of solutions containing $K$ solitons as $t\to +\infty$ for any $K\ge 2$. In the same direction, recall that for the energy-critical wave equation in dimension $6$, Jendrej~\cite{JJ} proved the existence of radial two-bubble solutions.

\subsection{Outline of the proof}
The strategy of the proof of Theorem 1 combines the use of a refined approximate solution for the $K$-soliton problem from~\cite{MMwave2},   with uniform estimates and compactness, as in~\cite{MMwave1}. See also references in~\cite{MMwave1, MMwave2} for earlier works.

First, we introduce the refined approximate solution of the form
$\vec {\mathbf W} = \sum_{k=1}^{K} (\vec W_k+c_{k}\vec v_k )$ for large $t>0$, where for any $k\in\{1,\dots,K\}$, $\vec W_k$ is a soliton with two time-dependent parameters regarding scaling and translation respectively, and
$\vec v_k$ is a correction term \cite{MMwave2} improving the simpler approximate solution used in~\cite{MMwave1}.
These correction terms of size $t^{-2}$ in the energy space are solutions of non-homogeneous wave equations whose source terms are the main order of the nonlinear interactions of size $t^{-3}$ between the $K$-solitons.
In this way, $\vec {\mathbf W}$ is an approximate solution of the $K$-solitons problem at order $t^{-4}$.
This is decisive for the construction under the smallness condition~\eqref{sc}.

Let $S_{n}\to +\infty$ as $n\to +\infty$ and, for each $n$, let $u_{n}$ be the backwards solution of~\eqref{wave} with initial data at time $S_{n}$
\begin{equation}\label{simb}
\vec{u}_{n}(S_{n},x)\sim \sum_{k=1}^{K}\left(\vec{ W}^{\infty}_{k}\left(S_{n},x\right)+c_{k}\vec{V}^{\infty}_{k}\left(S_{n},x\right)\right)
\end{equation}
(See~\eqref{defun} for a precise definition of $\vec{u}_{n}(S_{n})$). The goal is to prove that there exists a time $T_{0}$ independent of $n$, such that the following uniform estimates~\eqref{eq:out} hold on $[T_{0},S_{n}]$,
	\begin{equation}\label{eq:out}
\left\|\vec{u}_{n}(t) 
-\sum_{k=1}^K \vec{ W}^{\infty}_{k}(t)-\sum_{k=1}^{K}c_{k}\vec{V}^{\infty}_{k}(t)
\right\|_{\dot H^1\times L^{2}}\lesssim \frac{1}{t}.
\end{equation}
The existence of a multi-soliton then follows easily from standard compactness arguments (note that $\vec{u}_{n}(T_{0})$ converges strongly in the energy space $\dot{H}^{1}\times L^{2}$, see \S 5). Thus, we now focus on the proof of~\eqref{eq:out}.

We introduce
\begin{equation*}
\vec \varepsilon=\left(\begin{array}{c}\varepsilon \\ \eta \end{array}\right),\quad 
\vec u_{n} = \left(\begin{array}{c} u_{n} \\ \partial_t u_{n}\end{array}\right) =
\vec {\mathbf W} + \vec \varepsilon, 
\end{equation*}
where
\begin{equation*}
\vec{\mathbf W} = \left(\begin{array}{c} {\mathbf W}
\\[.2cm] {\mathbf X} \end{array}\right) =
\sum_{k=1}^{K} \left( \vec W_k + c_k\vec v_k \right) .
\end{equation*}
By a standard procedure, we choose modulation parameters $\lambda_{k}(t)$ and $\mathbf{y}_{k}(t)$ close to $\lambda_{k}^{\infty}$ and $\mathbf{y}_{k}^{\infty}$, and obtain suitable orthogonality conditions on $\vec{\varepsilon}$. The equation of $\vec{\varepsilon}$ is thus coupled with equations of $\lambda_{k}(t)$ and $\mathbf{y}_{k}(t)$. See Lemma~\ref{le:4}.

To prove~\eqref{eq:out}, we introduce the following energy functional
\[\mathcal H =\int \left\{|\nabla\varepsilon|^2+|\eta|^2- 2 (F ({\mathbf W} +\varepsilon )-F ({\mathbf W} )-f ({\mathbf W} )\varepsilon ) \right\}
+ 2 \int (\vec{\chi}\cdot\nabla \varepsilon )\eta
\] 
where the bounded function $\vec{\chi}$ is equal to $\bell_{k}$ in a neighborhood of the soliton $W_{k}$ and close to $\frac{x}{t}$ in ``transition regions'' between $K$ solitons (see \S  $\mathrm{5.3}$ for a precise definition). The specific choice of function $\vec{\chi}$ is the main novelty of this paper compared to~\cite{MMwave1,MMwave2}.

The functional $\mathcal{H}(t)$ has the following two important properties (see Lemma~\ref{mainprop} for more precise statements):
\\(1) $\mathcal{H}$ is coercive, in the sense that (up to unstable directions, to be controlled separately) it controls the size of $\vec{\varepsilon}$ in the energy space
\begin{equation*}
\mathcal{H}\sim \|\varepsilon\|_{\dot{H}^{1}}^{2}+\|\eta\|_{L^{2}}^{2}.
\end{equation*}
\\(2) The variation of $\mathcal{H}(t)$ is controlled on $[T_{0},S_{n}]$ in the following sense, for any $\delta>0$ small enough,
\begin{equation}\label{outv}
- \frac d{dt} \left( t^{6-3\delta} \mathcal H\right) \lesssim \frac{1}{t^{1+\delta}}.
\end{equation}
Therefore, integrating~\eqref{outv} on $[t, S_{n}]$, from~\eqref{simb}, we find the uniform bound, for any $t\in [T_{0},S_{n}]$, 
\begin{equation}\label{Energy}
\|\varepsilon\|_{\dot{H}^{1}}^{2}+\|\eta\|_{L^{2}}^{2}\lesssim t^{-3+\delta}.
\end{equation}
Using time integration of the equation of the parameters, we can obtain~\eqref{out2} from the above estimate
\begin{equation}\label{out2}
|\lambda_{k}(t)-\lambda_{k}^{\infty}|\lesssim \frac{1}{t},\quad |\mathbf{y}_{k}(t)-\mathbf{y}_{k}^{\infty}|\lesssim \frac{1}{t}.
\end{equation}
Then~\eqref{eq:out} follows from~\eqref{Energy},~\eqref{out2}.
\subsection*{Acknowledgements}
I would like to give the most sincere thanks to Prof. Yvan Martel for his support and his direction during the writing of this article. I am also grateful to the anonymous referees for careful reading and useful suggestions.
\section{Notation and preliminaries}
\subsection{Notation}
We denote
\begin{equation*}
 (g,{\tilde g})_{L^{2}}   =\int g \tilde g,\quad  \|g\|_{L^{2}}^2 = \int |g|^2,\quad  (g, {\tilde g})_{\dot{H}^{1}} =\int \nabla g \cdot \nabla {\tilde g} ,
\quad  \|g\|_{\dot{H}^{1}}^2=\int |\nabla g|^2.
\end{equation*}
For 
$$\vec{g} = \left(\begin{array}{c}g \\h\end{array}\right),
\ \vec{\tilde g} = \left(\begin{array}{c}\tilde g \\\tilde h\end{array}\right),$$
set
\begin{align*}
({\vec{g}}, {\vec{\tilde g}})_{L^{2}}   = (g ,{\tilde g})_{L^{2}} + (h,{\tilde h})_{L^{2}},\quad
({\vec {g}}, {\vec{\tilde g}})_{\dot H^1\times L^2}  =  (g ,{\tilde g})_{\dot{H}^{1}} +  (h,{\tilde h})_{L^{2}}
.
\end{align*}

Set $\langle x \rangle = (1+|x|^2)^{\frac 12}$. Let
\begin{equation*}
\label{aL}
\Lambda = \frac 32 + x \cdot \nabla ,
\quad \widetilde \Lambda = \frac 5 2 + x \cdot \nabla ,\quad
\widetilde\Lambda \nabla=\nabla\Lambda,\quad 
\vec \Lambda = \left(\begin{array}{c}\widetilde \Lambda \\ \Lambda\end{array}\right).
\end{equation*}
Let
$$
J=\left(\begin{array}{cc}0 & 1 \\-1 & 0\end{array}\right).
$$
For $\bell\in \mathbb{R}^{5}$ with $|\bell|<1$, denote
\begin{equation*}
(g,\tilde{g})_{\dot{H}_{\bell}^{1}}=\int \big(\nabla g\cdot\nabla \tilde{g}-(\bell\cdot\nabla g)(\bell\cdot\nabla \tilde{g}),\quad \|g\|_{\dot{H}_{\bell}^{1}}=\|g\|_{\dot{H}^{1}}^{2}-\|\bell\cdot\nabla g\|_{L^{2}}
\end{equation*}
Note that if we define
\begin{equation*}
g_{\bell}(x)=g\left(\left(\frac{1}{\sqrt{1-|\bell|^{2}}}-1\right)\frac{\bell(\bell\cdot x)}{|\bell|^{2}}+x\right)
\end{equation*}
and similarly for $\tilde{g}$, $\tilde{g_{\bell}}$, 
then
\begin{equation*}
({g_{\bell}},{\tilde{g_{\bell}}})_{\dot{H}_{\bell}^{1}}= (1-|\bell|^ 2)^{\frac 12} (g,{\tilde g})_{\dot{H}^{1}}.
\end{equation*}
Set
\[
x_{\bell} = \left( \frac{1}{\sqrt{1-|\bell|^{2}}}-1\right)\frac{\bell(\bell\cdot x)}{|\bell|^{2}}+x-\frac{\bell t}{\sqrt{1-|\bell|^{2}}},\quad
A_{\bell} = \partial_t + \bell\cdot\nabla,
\]
\[B_{\bell} = \partial_t^2 - (\bell\cdot\nabla)\cdot(\bell\cdot\nabla) = A_{\bell}^2 - 2 (\bell\cdot\nabla) A_{\bell},
\]
\[\Lambda_{\bell} = \frac 32 + (x-\bell t) \cdot \nabla,\quad
\Delta_{\bell} = \Delta-(\bell\cdot\nabla)(\bell\cdot\nabla).
\]
For $\alpha>0$ small to be fixed later, set
\begin{equation}\label{phia}
\varphi_\alpha (x) = (1+|x|^2)^{- \alpha}.
\end{equation}

We recall standard Sobolev and H\"older inequalities
\begin{equation}\label{sobolev}
\|u\|_{L^{10/3}}\lesssim \|u\|_{\dot H^1},\quad \|u\|_{L^{10}} \lesssim \|u\|_{\dot H^2},
\end{equation}
\begin{equation}\label{holder1}
\int |u| |v| |w|^{\frac 43} \lesssim \|u\|_{L^{10/3}}\|v\|_{L^{10/3}}\|w\|_{L^{10/3}}^{4/3}\lesssim
\|u\|_{\dot H^1}\|v\|_{\dot H^1}\|w\|_{\dot H^1}^{4/3},
\end{equation}
\begin{equation}\label{lor}
\|uv\|_{L^{10/7}}\lesssim \|u\|_{L^{10/3}}\|u\|_{L^{5/2}},\quad 
\|uvw\|_{L^{10/7}}\lesssim \|u\|_{L^{10/3}}\|v\|_{L^{10/3}}\|w\|_{L^{10}}.
\end{equation}

Denote
\begin{equation*}
f(u)=|u|^{\frac{4}{3}}u,\quad F(u)=\frac{3}{10}|u|^{\frac{10}{3}}.
\end{equation*}
\subsection{Energy linearization around $W$}
Let
\[
L = -\Delta - \frac 73 W^{\frac 43} ,\quad 
(L g,g) = \int \left(|\nabla g|^2 - \frac 73 W^{\frac 43} g^2\right),
\]
\[ H = \left(\begin{array}{cc} L & 0 \\0 & {\rm Id}\end{array}\right),\quad 
(H \vec g,\vec g) = (L g,g)+ \|h\|_{L^2}^2 \quad \text{for} \quad\vec g = \left(\begin{array}{c}g \\h\end{array}\right).
\]
For $\vec g$ small in the energy space, we recall the expansion of the energy
\begin{equation}\label{enerlin}
	E(W+g,h)=E(W,0)+\frac{1}{2}(H\vec{g},\vec{g})_{L^{2}}+ O(\|g\|_{\dot{H}^{1}}^3).
\end{equation}
We recall some technical properties of the operator $L$.
\begin{lemma}[\cite{MMwave1}]\label{l:Q}
	\emph{(i) Spectrum.} The operator $L$ on $L^2$ with domain $H^2$ is a self-adjoint operator with essential spectrum $[0,+\infty)$, no positive eigenvalue and only one negative eigenvalue $-\lambda_0$, with a smooth radial positive eigenfunction $Y \in \mathcal S(\mathbb{R}^5)$.
	Moreover,
	\begin{equation*}
	L (\Lambda W) = L (\partial_{x_j} W) =0, \quad \hbox{for any $j=1,\ldots,5$.}
	\end{equation*}
	
	\emph{(ii) Localized coercivity.} For $\alpha>0$ small enough, there exists $\mu>0$ such that, for all $g \in \dot H^1$, the following holds.
	\begin{align*}
	& \int |\nabla g|^2 \varphi^2- f'(W) g^2 
	\geq   \mu  \int |\nabla g|^2 \varphi^2 
	-\frac 1{\mu} \left( ( g,{\Lambda W})_{\dot{H}^{1}}^2 + \sum_{j=1}^5 (g,{\partial_{x_j} W})_{\dot{H}^{1}}^2+( g,{Y})_{L^{2}}^2\right).
	\label{stloc}
	\end{align*}
\end{lemma}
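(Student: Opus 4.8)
\emph{Strategy.} This is a classical fact about the linearization at the Aubin--Talenti bubble, so one may simply invoke \cite{MMwave1}; below I outline how I would prove it.

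\emph{Part (i).} The potential $\tfrac73 W^{4/3}$ is smooth, bounded and decays like $\langle x\rangle^{-4}$ at infinity, hence is a $(-\Delta)$-compact perturbation; so $L$ is self-adjoint on $L^2(\mathbb{R}^5)$ with domain $H^2$, and Weyl's theorem gives $\sigma_{\mathrm{ess}}(L)=[0,+\infty)$. The absence of positive (embedded) eigenvalues follows from a Kato/Agmon-type argument using the decay of the potential. The identities $L(\Lambda W)=L(\partial_{x_j}W)=0$ come from differentiating $\Delta W+W^{7/3}=0$ along the scaling and translation symmetries (and can be checked directly from $[\Delta,\Lambda]=2\Delta$). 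To count the discrete spectrum one decomposes $L^2(\mathbb{R}^5)$ into spherical-harmonic sectors: in the sector $\ell=1$ the functions $\partial_{x_j}W$ are zero modes with a sign-definite radial profile, hence ground states of that reduced operator, so all sectors $\ell\ge1$ are nonnegative with kernel exactly $\mathrm{span}\{\partial_{x_1}W,\dots,\partial_{x_5}W\}$; in the radial sector $\ell=0$ the zero-energy solution $\Lambda W$ changes sign exactly once, so Sturm oscillation theory yields exactly one negative eigenvalue $-\lambda_0$, with a positive radial eigenfunction $Y$, and a one-dimensional kernel $\mathrm{span}\{\Lambda W\}$. Finally, $-\lambda_0<0=\inf\sigma_{\mathrm{ess}}(L)$, so Agmon estimates give exponential decay of $Y$, hence $Y\in\mathcal S(\mathbb{R}^5)$.

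\emph{Part (ii): the unweighted estimate.} The first step is the non-weighted coercivity: there is $\mu_0>0$ with, for all $g\in\dot H^1$,
\[
(Lg,g)\ \ge\ \mu_0\|g\|_{\dot H^1}^2-\frac1{\mu_0}\Big((g,\Lambda W)_{\dot H^1}^2+\sum_{j=1}^5(g,\partial_{x_j}W)_{\dot H^1}^2+(g,Y)_{L^2}^2\Big).
\]
I would prove this by contradiction and compactness: were it false, there would be $g_n$ with $\|g_n\|_{\dot H^1}=1$, $(Lg_n,g_n)\to c\le0$ and all three correction terms tending to $0$. Along a subsequence $g_n\rightharpoonup g_\ast$ in $\dot H^1$; since $W^{4/3}$ is bounded and lies in $L^{5/2}$, the term $\tfrac73\int W^{4/3}g_n^2$ converges to $\tfrac73\int W^{4/3}g_\ast^2$ (split into $\{|x|\le R\}$, using Rellich, and $\{|x|>R\}$, using $W^{4/3}\in L^{5/2}$ and Sobolev), so $\tfrac73\int W^{4/3}g_\ast^2=1-c\ge1$ and $g_\ast\neq0$; moreover $g_\ast$ is $\dot H^1$-orthogonal to $\Lambda W,\partial_{x_j}W$, is $L^2$-orthogonal to $Y$, and $(Lg_\ast,g_\ast)\le c\le0$. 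By Part (i), decomposing $g_\ast$ along the spectral subspaces of $L$ forces the positive-spectrum component to vanish, hence $g_\ast\in\ker L=\mathrm{span}\{\Lambda W,\partial_{x_j}W\}$; but then $\dot H^1$-orthogonality to those functions yields $g_\ast=0$, a contradiction.

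\emph{Part (ii): localization.} To pass from the unweighted estimate to the stated one I would localize with a cutoff and compare the slowly decaying weight $\varphi_\alpha$ against the much faster decay $f'(W)\lesssim\langle x\rangle^{-4}$. Concretely: split $\mathbb{R}^5$ into a ball $\{|x|\le R\}$, where $\varphi_\alpha^2$ is bounded below and $f'(W)$ is essentially concentrated, and its complement, where $f'(W)$ is negligible; on the ball one applies the unweighted estimate to $\psi g$ for a suitable cutoff $\psi$, picking up exactly the correction terms; on the complement one uses weighted Hardy inequalities (available because $\langle x\rangle^{-4}\ll\varphi_\alpha^2\langle x\rangle^{-2}=\langle x\rangle^{-4\alpha-2}$ precisely when $\alpha<\tfrac12$) together with a control of the commutator term produced by the cutoff; choosing $R$ large and then $\alpha$ small produces some $\mu>0$ as claimed.

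\emph{Main obstacle.} The substantive point of Part (i) is the non-degeneracy of $W$ (that $\ker L$ is exactly $\mathrm{span}\{\Lambda W,\partial_{x_j}W\}$ and the Morse index is $1$), which requires the spherical-harmonic decomposition and the Sturm oscillation argument and is the only place the explicit form of $W$ is essential. In Part (ii) the compactness argument is elementary in dimension $5$ because $W^{4/3}$ is bounded (no bubbling subtlety); the real technical difficulty is the localization step, where one must balance the slow decay of $\varphi_\alpha$ against the $\langle x\rangle^{-4}$ decay of $f'(W)$ and keep track of the cutoff commutators — this is where the smallness of $\alpha$ is used, and it is the most delicate part of the argument.
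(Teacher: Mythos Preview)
The paper does not give a proof of this lemma at all: it is stated with the citation \cite{MMwave1} in the header and is simply quoted as a known result from Martel--Merle. Your proposal is therefore strictly more detailed than the paper, and the outline you give (Weyl for the essential spectrum, differentiation of the equation along symmetries for the kernel, spherical-harmonic decomposition plus Sturm oscillation for the Morse index and non-degeneracy, Agmon for the decay of $Y$, then compactness for the unweighted coercivity and a cutoff/Hardy argument for the localization) is precisely the standard route used in the references \cite{MMwave1} and \cite{OR} that the paper cites. One minor caveat: in your compactness step you should be a bit more careful in the passage ``$(Lg_\ast,g_\ast)\le 0$ and $g_\ast\perp Y$ forces $g_\ast\in\ker L$,'' since the spectral decomposition of $L$ is a priori defined on $L^2$ while $g_\ast$ is only in $\dot H^1$; the usual fix is either to work with the form $(Lg,g)$ and its $\dot H^1$-coercivity on the orthogonal complement, or to note that $g_\ast\in L^2$ a posteriori from the equation $Lg_\ast=0$ and the decay of $W$.
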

\subsection{Energy linearization around $W_{\bell}$}\label{sec.23}
For $\bell \in \mathbb{R}^5$ with 
$|\boldsymbol{\ell}|< 1$, $W_{\bell}$ defined in \eqref{defWbb} solves
\begin{equation}\label{eqWbb}
\Delta W_{\bell}-{\bell}\cdot\nabla({\bell}\cdot\nabla W_{\bell})+W_{\bell}^{\frac 73}=0.
\end{equation}
so that $u(t,x) = W_{\bell}\left(x-\bell t\right)$ is a solution of \eqref{wave}. 
Note that
\begin{equation}\label{EWb}
E(W_{\bell},-\bell\cdot  \nabla W_{\bell})-\int|\bell\cdot\nabla W_{\bell}|^2
=   (1-|\bell|^2)^{\frac 12} E(W,0).
\end{equation}
The following operators are related to the linearization of the energy around $W_{\bell}$. Let
\begin{align*}
&L_{\bell}   = -\Delta +{\bell}\cdot\nabla({\bell}\cdot\nabla)  - f'(W_{\bell})  ,\\
&(L_{\bell}g,g)_{L^{2}}=\int \nabla g\cdot \nabla g-(\bell\cdot \nabla g)\cdot(\bell\cdot \nabla g)-f'(W_{\bell})g^{2},\\
&H_{\bell} = \left(\begin{array}{cc}  -\Delta - f'(W_{\bell}) & -{\bell}\cdot \nabla \\ {\bell} \cdot \nabla & {\rm Id}\end{array}\right),\quad (H_{\bell}\vec{g},\vec{g})_{L^{2}}=(L_{\bell}g,g)_{L^{2}}+\|\bell\cdot\nabla g+h\|_{L^{2}}^{2}
\end{align*}
Indeed, proceeding as in \eqref{enerlin}, using \eqref{eqWbb} and \eqref{EWb}, we obtain
\begin{align*}
&E(W_{\bell} + g, -\bell\cdot\nabla W_{\bell}+h) +\int \bell\cdot\nabla(W_{\bell}+g)(-\bell\cdot\nabla W_{\bell}+h) \\
&=(1-|\bell|^{2})^{\frac{1}{2}}E(W,0)+\frac{1}{2}(H_{\bell}\vec{g},\vec{g})^{2}_{L^{2}}+O(\|g\|_{\dot{H}^{1}}^{3}).
\end{align*}
Set  
\begin{align*}
\vec{Z}_{\bell}^\Lambda = \left(\begin{array}{c} \Lambda W_{\bell} \\ - {\bell}\cdot \nabla(\Lambda W_{\bell})\end{array}\right),\quad
\vec{Z}_{\bell}^{\nabla_j} = \left(\begin{array}{c} \partial_{x_j} W_{\bell} \\ - {\bell}\cdot \nabla (\partial_{x_{j}} W_{\bell})\end{array}\right),\quad
\vec{Z}_{\bell}^W = \left(\begin{array}{c} W_{\bell} \\ - {\bell}\cdot \nabla W_{\bell} \end{array}\right),
\end{align*}
$$
Y_{\bell}=Y\left(\left(\frac{1}{\sqrt{1-|\boldsymbol{\ell}|^2}}-1\right) \frac{\boldsymbol{\ell}(\boldsymbol{\ell}\cdot x)}{|\boldsymbol\ell|^2} +x\right),
\quad 
\vec{Z}_{\bell}^{\pm} = \left(\begin{array}{c} \left({\bell} \cdot \nabla  Y_{\bell} 
\pm \frac {\sqrt{\lambda_0}}{\sqrt{1-|\bell|^2}} Y_{\bell}\right) e^{\pm \frac {  \sqrt{\lambda_0}}{\sqrt{1-|{\bell}|^2}} {\bell} \cdot x} \\ 
Y_{\bell}  e^{\pm \frac { \sqrt{\lambda_0}}{\sqrt{1-|{\bell}|^2}} {\bell} \cdot  x } \end{array}\right).$$

We recall following several technical facts.

\begin{lemma}[\cite{MMwave2}]\label{surZZ} For $\bell\in \mathbb{R}^{5}$ with $|\bell|<1$,
	\begin{enumerate}[label=\emph{(\roman*)}]
		\item\emph{Properties of $L_{\boldsymbol{\ell}}$.}
		\[
		L_{\boldsymbol{\ell}} (\Lambda W_{\boldsymbol{\ell}}) = L_{\boldsymbol{\ell}} (\partial_{x_j} W_{\boldsymbol\ell})=0,\quad L_{\boldsymbol{\ell}} Y_{\boldsymbol{\ell}} = -\lambda_0 Y_{\boldsymbol{\ell}},\quad 
		L_{\boldsymbol{\ell}} W_{\boldsymbol{\ell}} = - \frac 43 W_{\boldsymbol{\ell}}^{\frac 73}.
		\]
		\item\emph{Properties of $H_{\boldsymbol{\ell}}$ and $H_{\boldsymbol{\ell}} J$.}
		\begin{align*}
		&H_{\boldsymbol\ell} \vec Z_{\boldsymbol\ell}^\Lambda= H_{\boldsymbol\ell} \vec Z_{\boldsymbol\ell}^{\nabla_j}=0,\quad
		H_{\boldsymbol\ell} \vec Z_{\boldsymbol\ell}^W= - \frac 43 \left(\begin{array}{c} W_{\boldsymbol\ell}^{\frac 73} \\ 0 \end{array}\right),
		\\
		&\psl {H_{\boldsymbol\ell} \vec Z_{\boldsymbol\ell}^W}{\vec Z_{\boldsymbol\ell}^W} = -\frac 43 \int W_{\boldsymbol\ell}^{\frac {10}3},
		\quad
		- H_{\boldsymbol\ell} J (\vec Z_{\boldsymbol\ell}^\pm) = \pm \sqrt{\lambda_0} (1-|{\boldsymbol\ell}|^2)^{\frac 12} \vec Z_{\boldsymbol\ell}^\pm,
		\\
		&\left(\vec Z_{\boldsymbol\ell}^\Lambda,\vec Z_{\boldsymbol\ell}^W\right)_{\dot H_{\bell}^1\times L^2}=
		\left(\vec Z_{\boldsymbol\ell}^{\nabla_j},\vec Z_{\boldsymbol\ell}^W\right)_{\dot H_{\bell}^1\times L^2}=0,
		\quad \psl { \vec Z_{\boldsymbol\ell}^\Lambda}{\vec Z_{\boldsymbol\ell}^\pm}=
		\psl { \vec Z_{\boldsymbol\ell}^{\nabla_j}}{\vec Z_{\boldsymbol\ell}^\pm}=0.
		\end{align*}
		\item\emph{Coercivity.}
		For $\alpha>0$ small enough, there exists $\mu>0$ such that, for all $\vec g \in \dot H^1\times L^2$,
		\begin{multline*}
		\int \left(|\nabla g|^2 \varphi_\alpha^2 -\frac 73 W_{\bell}^{\frac 43} g^2 + h^2 \varphi_\alpha^2
		+ 2 ({\bell}\cdot\nabla g)h \varphi_\alpha^2 \right)
		\\ \geq\mu\int\left(|\nabla g|^2+h^2\right)\varphi_\alpha^2 
		- \frac 1{\mu }\left\{ ({g},{\Lambda W_{\bell}})_{\dot{H}_{\bell}^{1}}^2+|({g},{\nabla W_{\bell}})_{\dot{H}_{\bell}^{1}}|^2
		+ ({\vec g},{\vec Z_{\bell}^{+}})_{L^{2}}^2 + ({\vec g},{\vec Z_{\bell}^{-}})_{L^{2}}^2\right\}.
		\end{multline*}
	\end{enumerate}
\end{lemma}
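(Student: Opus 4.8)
The common mechanism is the \emph{Lorentz change of variables}. Let $A_{\bell}$ be the symmetric linear bijection of $\mathbb{R}^5$ given by $A_{\bell}x=\big(\tfrac{1}{\sqrt{1-|\bell|^2}}-1\big)\tfrac{\bell(\bell\cdot x)}{|\bell|^2}+x$ (scaling by $(1-|\bell|^2)^{-1/2}$ along $\bell$, the identity on $\bell^{\perp}$), so that by~\eqref{defWbb}, $W_{\bell}=W\circ A_{\bell}$ and $Y_{\bell}=Y\circ A_{\bell}$. For any $g$ one has $\nabla(g\circ A_{\bell})=A_{\bell}\big((\nabla g)\circ A_{\bell}\big)$ and, since $A_{\bell}$ is symmetric, $\Lambda(g\circ A_{\bell})=(\Lambda g)\circ A_{\bell}$; and, in coordinates adapted to $\bell$ — where $A_{\bell}=\mathrm{diag}\big((1-|\bell|^2)^{-1/2},1,\dots,1\big)$ and $\Delta_{\bell}=(1-|\bell|^2)\partial_1^2+\sum_{i\ge2}\partial_i^2$ — a direct computation gives $\Delta_{\bell}(g\circ A_{\bell})=(\Delta g)\circ A_{\bell}$. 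As also $f'(W_{\bell})=f'(W)\circ A_{\bell}$, this yields the conjugation identity $L_{\bell}(g\circ A_{\bell})=(Lg)\circ A_{\bell}$. Part~(i) is then immediate from Lemma~\ref{l:Q}(i): apply the conjugation identity to $g\in\{\Lambda W,\partial_{x_i}W,Y,W\}$, using that $\Lambda W_{\bell}=(\Lambda W)\circ A_{\bell}$, that $\nabla W_{\bell}=A_{\bell}\big((\nabla W)\circ A_{\bell}\big)$ is a linear combination of the $(\partial_{x_i}W)\circ A_{\bell}$, and $L_{\bell}W_{\bell}=(LW)\circ A_{\bell}=-\tfrac43 W^{\frac73}\circ A_{\bell}=-\tfrac43 W_{\bell}^{\frac73}$.

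For part~(ii), writing each of $\vec Z^{\Lambda}_{\bell},\vec Z^{\nabla_j}_{\bell},\vec Z^{W}_{\bell}$ in the form $(\zeta,-\bell\cdot\nabla\zeta)$ and inserting part~(i) into the $2\times2$ matrix defining $H_{\bell}$, the first line collapses to $L_{\bell}\zeta$ and the second line cancels identically, which gives $H_{\bell}\vec Z^{\Lambda}_{\bell}=H_{\bell}\vec Z^{\nabla_j}_{\bell}=0$, $H_{\bell}\vec Z^{W}_{\bell}=-\tfrac43(W_{\bell}^{\frac73},0)$, and hence $(H_{\bell}\vec Z^W_{\bell},\vec Z^W_{\bell})_{L^2}=-\tfrac43\int W_{\bell}^{\frac{10}{3}}$. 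The relation $-H_{\bell}J\vec Z^{\pm}_{\bell}=\pm\sqrt{\lambda_0}(1-|\bell|^2)^{1/2}\vec Z^{\pm}_{\bell}$ expresses that $\vec Z^{\pm}_{\bell}$ span the stable/unstable directions of the generator $-H_{\bell}J=(JH_{\bell})^{*}$ of the adjoint linearized flow about $W_{\bell}(\cdot-\bell t)$; these are obtained by Lorentz boost from the static ones $\propto(\pm\sqrt{\lambda_0}Y,Y)$ attached to $LY=-\lambda_0Y$, the boost producing the factors $e^{\pm\frac{\sqrt{\lambda_0}}{\sqrt{1-|\bell|^2}}\bell\cdot x}$ and the $\bell\cdot\nabla Y_{\bell}$ terms (time–space mixing, and $\partial_t$) and the rate $\sqrt{\lambda_0}$ being dilated to $\sqrt{\lambda_0}(1-|\bell|^2)^{1/2}$ (consistently with~\eqref{EWb}); I would verify the displayed identity directly from part~(i). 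Finally, in $(\vec Z^{\Lambda}_{\bell},\vec Z^W_{\bell})_{\dot H^1_{\bell}\times L^2}$ and $(\vec Z^{\nabla_j}_{\bell},\vec Z^W_{\bell})_{\dot H^1_{\bell}\times L^2}$ the $\bell$-dependent terms cancel between the two summands, reducing these to $(\Lambda W_{\bell},W_{\bell})_{\dot H^1}=0$ and $(\partial_{x_j}W_{\bell},W_{\bell})_{\dot H^1}=0$, which follow by integration by parts from the radiality of $W$ and the Pohozaev identities $(\Lambda W,W)_{\dot H^1}=\int\Lambda W\,W^{\frac73}=0$ and $(\partial_{x_j}W,W)_{\dot H^1}=\tfrac12\int\partial_{x_j}|\nabla W|^2=0$; and $(\vec Z^{\Lambda}_{\bell},\vec Z^{\pm}_{\bell})_{L^2}=(\vec Z^{\nabla_j}_{\bell},\vec Z^{\pm}_{\bell})_{L^2}=0$ follow by pairing the eigenvalue relation with the kernel elements $\vec Z^{\Lambda}_{\bell},\vec Z^{\nabla_j}_{\bell}$ and moving the $L^2$-self-adjoint $H_{\bell}$ across, which kills the left side while leaving a nonzero multiple of the sought pairing on the right.

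For the coercivity~(iii), the plan is to reduce to the static localized coercivity Lemma~\ref{l:Q}(ii). First complete the square in the cross term, $h^2\varphi_\alpha^2+2(\bell\cdot\nabla g)h\,\varphi_\alpha^2=(h+\bell\cdot\nabla g)^2\varphi_\alpha^2-(\bell\cdot\nabla g)^2\varphi_\alpha^2$, so the left-hand side of the claimed inequality equals
\[
\int\Big(\big(|\nabla g|^2-(\bell\cdot\nabla g)^2\big)\varphi_\alpha^2-\tfrac73 W_{\bell}^{\frac43}g^2\Big)+\int(h+\bell\cdot\nabla g)^2\varphi_\alpha^2 ;
\]
the second integral is nonnegative and, together with $\int(\bell\cdot\nabla g)^2\varphi_\alpha^2\le|\bell|^2\int|\nabla g|^2\varphi_\alpha^2$, will absorb the $\mu\int h^2\varphi_\alpha^2$ term on the right once the first integral is controlled. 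For the latter one may either (a) apply the Lorentz change of variables, which on the pair $(g,h)$ is the \emph{coupled} map mixing $h$ with $\bell\cdot\nabla g$ (reflecting $\partial_t$ in the boost) and which takes the $\bell$-modified form precisely to the static form $\int\big(|\nabla\tilde g|^2\tilde\varphi^2-f'(W)\tilde g^2\big)$ with $\tilde g=g\circ A_{\bell}^{-1}$ and $\tilde\varphi=\varphi_\alpha\circ A_{\bell}^{-1}$ comparable to $\varphi_{\alpha'}$ for a slightly different $\alpha'$, and then translate the orthogonality conditions, using $(\cdot,\cdot)_{\dot H^1_{\bell}}=(1-|\bell|^2)^{1/2}(\,\cdot\circ A_{\bell}^{-1},\cdot\circ A_{\bell}^{-1})_{\dot H^1}$ to turn $(g,\Lambda W_{\bell})_{\dot H^1_{\bell}}=(g,\nabla W_{\bell})_{\dot H^1_{\bell}}=0$ into $(\tilde g,\Lambda W)_{\dot H^1}=(\tilde g,\nabla W)_{\dot H^1}=0$; or (b) first prove the \emph{unweighted} version ($\alpha=0$) by a compactness–contradiction argument for $H_{\bell}$, whose kernel $\mathrm{span}\{\vec Z^{\Lambda}_{\bell},\vec Z^{\nabla_j}_{\bell}\}$, single unstable/stable pair $\vec Z^{\pm}_{\bell}$ and essential spectrum $[0,\infty)$ are all transported from $H$ by the conjugation identity, and then restore the weight $\varphi_\alpha$ by perturbation, exploiting the decay of $W_{\bell}^{\frac43}$ and $|\nabla\varphi_\alpha/\varphi_\alpha|\lesssim\alpha$ at the cost of shrinking $\mu$.

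The step I expect to be the main obstacle is matching the spectral data in~(iii): the weighted $L^2$ pairings against the exponentially weighted, $h$-dependent vectors $\vec Z^{\pm}_{\bell}$ do not translate cleanly into the single static $(\tilde g,Y)_{L^2}^2$ term of Lemma~\ref{l:Q}(ii), so route~(a) requires carrying the coupled transformation through all terms with the correct bookkeeping; I would therefore expect to fall back on route~(b), working directly with the spectrum of $H_{\bell}$ and localizing afterwards, to keep the argument clean.
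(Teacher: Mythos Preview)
Your sketch is correct and follows the natural route: conjugation by the Lorentz change of variables $A_{\bell}$ to reduce everything to the static operators $L$, $H$ and then invoking Lemma~\ref{l:Q}. The algebraic identities in~(i) and~(ii) are verified exactly as you outline (in particular your observation that for $\vec Z=(\zeta,-\bell\cdot\nabla\zeta)$ one has $H_{\bell}\vec Z=(L_{\bell}\zeta,0)$ is the right mechanism, and your self-adjointness argument for the vanishing of $(\vec Z^{\Lambda}_{\bell},\vec Z^{\pm}_{\bell})_{L^2}$ is clean). For~(iii), both routes you propose work; the completion-of-the-square reduction you wrote is precisely how one decouples the $h$ contribution.

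Note, however, that the paper does not actually give a proof of this lemma: it is stated as a recalled result from~\cite{MMwave2} (and implicitly~\cite{MMwave1}) with no argument supplied. So there is no ``paper's own proof'' to compare against here; your sketch supplies what the cited references contain, and indeed the argument in~\cite{MMwave1,MMwave2} proceeds by the same Lorentz reduction to Lemma~\ref{l:Q}. Your instinct that route~(b) for~(iii) is cleaner than trying to match the $\vec Z^{\pm}_{\bell}$ pairings through the change of variables is sound, though in practice the references handle the coercivity by first changing variables on the quadratic form (your route~(a)) and then noting that the $(\vec g,\vec Z^{\pm}_{\bell})_{L^2}$ terms, once expanded, are linear combinations of $(\tilde g,Y)_{L^2}$ and $(\tilde h + \bell\cdot\nabla\tilde g,Y)_{L^2}$-type quantities, which suffices after the square has been completed.
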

\subsection{Approximate solution to a non-homogeneous linearized equation}
Let $|\bell| <1$ and $F$, $G$ be defined by
\begin{equation*}
F = W^{\frac 43} + \kappa_{\bell} \Lambda W , \quad 
G = \kappa_{\bell}(1-|\bell|^2)^{-\frac 12} \bell\cdot\nabla \Lambda W ,\quad \kappa_{\bell} = - (1-|\bell|^2) \frac{(W^{\frac 43},\Lambda W)}{\|\Lambda W\|_{L^2}^2}>0 .
\end{equation*}
Set
\begin{equation*}
w_{\bell} (t,x) = W ( x_{\bell} ),\quad F_{\bell} (t,x) = F ( {x_{\bell}} ),\quad
G_{\bell} (t,x) = G ( {x_{\bell}} ).
\end{equation*}

We recall following technical lemma in~\cite{MMwave2}. 
\begin{lemma}\label{pr:AS1}
	There exists a smooth function $v_{\bell}$ such that, for all $0<\delta<1$ and all $t\geq 1$, 
	\begin{equation*}
	\|(v_{\bell},\partial_t v_{\bell})(t)\|_{\dot H^1\times L^2}\lesssim t^{-2},\quad
	\|v_{\bell}(t)\|_{L^2}\lesssim t^{-\frac 32+\delta},\quad
	\left\| \mathcal E_{\bell}(t) \right\|_{H^{1}}\lesssim t^{-4+\delta},
	\end{equation*}
	where 
	\[
	\mathcal E_{\bell}=
	\partial_t^2 v_{\bell} - \Delta v_{\bell} - \frac 73 w_{\bell}^{\frac 43} v_{\bell} 
	- f_{\bell} - g_{\bell},\quad f_{\bell} = t^{-3} F_{\bell},\quad g_{\bell} =t^{-2} G_{\bell}.
	\]
	Moreover, for all $m\geq 0$, $|\alpha|= 1$, $|\alpha'|\geq 2$, $t\geq 1$, $x\in {\mathbb{R}}^5$,
	\begin{equation}\label{e:n40}\begin{aligned}
	|A_{\bell}^m v_\ell(t,x)| &\lesssim (t+\langle x_{\bell}\rangle)^{-1} t^{-(1+m)} \langle x_{\bell}\rangle^{-2+\delta},\\
	|A_{\bell}^m\partial^\alpha v_{\bell}(t,x)| &\lesssim (t+\langle x_{\bell}\rangle)^{-1}t^{-(1+m)} \langle x_{\bell}\rangle^{-3+\delta},\\
	|A_{\bell}^m\partial^{\alpha'} v_{\bell}(t,x)| &\lesssim (t+\langle x_{\bell}\rangle)^{-1}t^{-(1+m)} \langle x_{\bell}\rangle^{-4+\delta},
	\end{aligned}
	\end{equation}
\begin{equation}\label{rk33}
|A_{\bell}^m \partial_t v_{\bell}|\lesssim |A_{\bell}^{m+1}v_{\bell}|+|A_{\bell}^m\nabla v_{\bell}|\lesssim t^{-(2+m)}\langle x_{\bell}\rangle^{-3+\delta},
\end{equation}
	and
	\begin{equation}\label{pres1}\begin{aligned}
	& |A_{\bell}^m \mathcal E_{\bell}(t,x)|\lesssim t^{-(4+m)+\delta} \langle x_{\bell}\rangle^{-3},\quad
	|A_{\bell}^m\partial^{\alpha} \mathcal E_{\bell}(t,x)|\lesssim t^{-(4+m)+\delta} \langle x_{\bell} \rangle^{-4}
	,\\
	& |A_{\bell}^m\partial^{\alpha'} \mathcal E_{\bell}(t,x)|\lesssim t^{-(4+m)+\delta} \langle x_{\bell} \rangle^{-5}.
	\end{aligned}
	\end{equation}
\end{lemma}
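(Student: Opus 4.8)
\textbf{Proof proposal for Lemma~\ref{pr:AS1}.}

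The plan is to construct $v_{\bell}$ explicitly as a solution of the linear inhomogeneous equation governed by the operator appearing in $\mathcal{E}_{\bell}$, by first killing the slowly-decaying source $g_{\bell}=t^{-2}G_{\bell}$ and then correcting for $f_{\bell}=t^{-3}F_{\bell}$. A key reduction is that after the Lorentz change of variables $x\mapsto x_{\bell}$, the operator $\partial_t^2-\Delta-\frac73 w_{\bell}^{4/3}$ pulls back (up to the factor $(1-|\bell|^2)$ built into $x_{\bell}$) to something of the form $B_{\bell}+L$ acting in the moving frame, so it suffices to solve the analogous problem for $W$ itself with the operator $L$ from Lemma~\ref{l:Q} and then Lorentz-transform back. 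I would look for $v_{\bell}$ as a finite asymptotic expansion in powers of $t^{-1}$ of the form $v_{\bell}(t,x)=t^{-1}a_1(x_{\bell})+t^{-2}a_2(x_{\bell})+\cdots$, where each profile $a_j$ solves an elliptic equation $L a_j=(\text{explicit source built from earlier }a_i\text{ and from }F,G)$. The choice of $\kappa_{\bell}$ is precisely the solvability (Fredholm) condition: because $L(\Lambda W)=0$, the source $W^{4/3}+\kappa_{\bell}\Lambda W$ defining $F$ is orthogonal to $\Lambda W$ exactly when $\kappa_{\bell}=-(1-|\bell|^2)(W^{4/3},\Lambda W)/\|\Lambda W\|_{L^2}^2$, which is the stated value; one checks it is positive since $(W^{4/3},\Lambda W)<0$.

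Next I would establish the pointwise decay estimates \eqref{e:n40}, \eqref{rk33}, \eqref{pres1}. The profiles $a_j$ are obtained by inverting $L$ against sources that decay polynomially in $\langle x\rangle$; the decay rates $\langle x_{\bell}\rangle^{-2+\delta}$, $\langle x_{\bell}\rangle^{-3+\delta}$, $\langle x_{\bell}\rangle^{-4+\delta}$ for $v_{\bell}$ and its first and higher derivatives come from standard elliptic-regularity/decay estimates for $L^{-1}$ applied to data with the decay of $W^{4/3}\sim\langle x\rangle^{-4}$ and $\Lambda W\sim\langle x\rangle^{-3}$, with the $\delta$-loss absorbing a possible logarithm at the critical decay rate. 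The mixed space-time weight $(t+\langle x_{\bell}\rangle)^{-1}$ reflects the fact that $v_{\bell}$ lives along the light cone of the $\bell$-traveling soliton: it follows because $A_{\bell}=\partial_t+\bell\cdot\nabla$ differentiates along the characteristic direction of $x_{\bell}$ and each application of $A_{\bell}$ gains a factor $t^{-1}$ from the explicit $t$-powers in the expansion while not worsening the spatial decay. The bounds on $A_{\bell}^m\mathcal{E}_{\bell}$ then follow by plugging the truncated expansion into the equation: all terms up to order $t^{-3}$ cancel by construction (this is where the $a_j$ equations are used), the leading remainder is of order $t^{-4}$, and the $\langle x_{\bell}\rangle^{-3}$, $\langle x_{\bell}\rangle^{-4}$, $\langle x_{\bell}\rangle^{-5}$ spatial weights track the decay of the worst uncancelled profile and its derivatives. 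The $\dot H^1\times L^2$, $L^2$, and $H^1$ norm bounds $t^{-2}$, $t^{-3/2+\delta}$, $t^{-4+\delta}$ are then immediate consequences of \eqref{e:n40} and \eqref{pres1} by integrating the pointwise weights over $\mathbb{R}^5$ (the $L^2$ bound loses a half-power and a $\delta$ because $\langle x\rangle^{-2}$ is borderline non-$L^2$ in dimension $5$ only after accounting for the $(t+\langle x\rangle)^{-1}$ factor, so one splits the integral at $|x_{\bell}|\sim t$).

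The main obstacle, and the technically delicate part, is the second-order profile: the source at order $t^{-2}$ that defines the leading correction involves $\bell\cdot\nabla\Lambda W$ (through $G$), which does not have a definite sign structure and whose inversion under $L$ requires care with the kernel directions $\partial_{x_j}W$ — one must verify the corresponding orthogonality/solvability conditions and check that the resulting profile has the claimed $\langle x_{\bell}\rangle^{-3+\delta}$ decay rather than the slower rate one might naively fear, since $\bell\cdot\nabla\Lambda W$ itself decays like $\langle x\rangle^{-4}$. A secondary subtlety is propagating the $A_{\bell}$-derivative estimates through the whole hierarchy: because $A_{\bell}$ acting on a function of $x_{\bell}$ alone produces $\bell\cdot\nabla$ applied to the profile (the explicit $-\bell t/\sqrt{1-|\bell|^2}$ shift in $x_{\bell}$ is annihilated by $A_{\bell}$ up to lower order), one has to bookkeep carefully how spatial derivatives and $t$-powers interact, and confirm that the gain is genuinely $t^{-1}$ per $A_{\bell}$ without spatial-decay loss. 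These computations are precisely the content of~\cite{MMwave2}, so I would organize the proof to follow that construction, emphasizing the points above and leaving the routine elliptic estimates to references.
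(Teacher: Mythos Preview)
Your proposal is correct and follows the same route as the paper: both defer to the construction in~\cite{MMwave2}. The paper's own proof is literally a one-line citation together with the observation that, since~\cite{MMwave2} treats only the special direction $\bell=\ell\,\mathbf{e}_1$, the general case follows by rotational invariance; you instead work with a general $\bell$ from the outset, which amounts to the same thing. One small imprecision worth flagging: the ansatz you write, $v_{\bell}=t^{-1}a_1(x_{\bell})+t^{-2}a_2(x_{\bell})+\cdots$, does not quite match the pointwise bound $|v_{\bell}|\lesssim (t+\langle x_{\bell}\rangle)^{-1}t^{-1}\langle x_{\bell}\rangle^{-2+\delta}$, whose mixed weight $(t+\langle x_{\bell}\rangle)^{-1}$ signals that the profiles are not purely functions of $x_{\bell}$ but carry an additional spatial cutoff at scale $t$; the actual construction in~\cite{MMwave2} accounts for this, and since you explicitly say you would follow that reference, this is not a gap in your argument.
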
 
\begin{proof}
See proof of Lemma 3.1 in~\cite{MMwave2}. Although this lemma is proved only for $\bell=\ell \mathbf{e}_{1}$, by rotation, we see that it holds for any $\bell \in \mathbb{R}^{5}$ with $|\bell|<1$.
\end{proof}

\section{Refined approximate solution of $K$-solitons problem}

In this section, we construct a refined approximate solution to the $K$-soliton problem. Let $K\ge 2$ and for any $k\in \{1,\dots,K\}$, let
\[
\lambda_k^\infty>0,\quad \mathbf{y}_k^\infty\in {\mathbb{R}}^5,\quad \epsilon_k=\pm1,\quad {\boldsymbol{\ell}}_k\in {\mathbb{R}}^5,
\quad \hbox{where $|{\boldsymbol{\ell}}_k|<1$}.\]

Let $C_0\gg1$ and $T_0\gg 1$ to be fixed and $I\subset [T_0,+\infty)$ be an interval of ${\mathbb{R}}$. We assume that these functions satisfy, for all $t\in I$,
\begin{equation}\label{BS0}
|\lambda_k(t)-\lambda^\infty_k|+|\mathbf{y}_k(t)-\mathbf{y}_k^\infty| \leq C_0 t^{-1}.
\end{equation}
For $k\in\{1,\dots,K\}$,
we consider $\mathcal C^1$ functions $\lambda_k>0$, $\mathbf{y}_k\in {\mathbb{R}}^5$ defined on $I$.
For $\vec G= (G,H)$, define
\[(\theta_k G)(t,x) = \frac {\epsilon_k}{\lambda_k^{\frac 32}(t)} G\left(\frac {x -{\boldsymbol{\ell}}_k t - \mathbf{y}_k(t)}{\lambda_k(t)}\right),
\quad
\vec \theta_k \vec G = \left(\begin{array}{c}\theta_k G \\[.2cm] \displaystyle \frac {\theta_k} {\lambda_k } H \end{array}\right),\quad
\vec {\tilde \theta}_k \vec G = \left(\begin{array}{c}\displaystyle \frac {\theta_k}{\lambda_k} G\\[.4cm] \theta_k H \end{array}\right).
\]
In particular, set
\begin{equation}\label{defWk}
W_k = \theta_k W_{{\boldsymbol{\ell}}_k} ,\quad X_k=-{\boldsymbol{\ell}}_k\cdot \nabla W_k,\quad 
\vec W_k = \left(\begin{array}{c} W_k \\[.2cm] \displaystyle X_k \end{array}\right).
\end{equation}

\subsection{Main interaction terms}
Expanding the nonlinearity $|u|^{\frac 43}u$ at $u=\sum_{k=1}^{K}W_{k}$, we prove that the main order of the nonlinear interactions is equivalent to the form $t^{-3} \sum_{k=1}^{K} c_k |W_k|^{\frac 43}$. The remaining error term is of size $t^{-4}$.
\begin{lemma}\label{le:int}
	Assume~\eqref{BS0}. For $k,k'\in\{1,2,\dots, K\}$, $k\neq k'$, let
	\[
	{\boldsymbol{\sigma}}_{k,k'} = 
	\left(\frac 1{\sqrt{1-|{\boldsymbol{\ell}}_{k'}|^2}} - 1\right) \frac{{\boldsymbol{\ell}}_{k'} ({\boldsymbol{\ell}}_{k'}\cdot({\boldsymbol{\ell}}_k-{\boldsymbol{\ell}}_{k'}))}{|{\boldsymbol{\ell}}_{k'}|^2}
	+{\boldsymbol{\ell}}_k-{\boldsymbol{\ell}}_{k'},
	\]
	and
	$
	c_{k} = \frac 73(15)^{\frac 32} \sum_{k'\ne k}{\epsilon_{k'}(\lambda_{k'}^\infty)^{\frac 32} }{|{\boldsymbol{\sigma}}_{k,k'}|^{-3}}.
	$
	Then,
	\[
	\left| \sum_{k=1}^{K} W_k\right|^{\frac 43} \left(\sum_{k=1}^{K} W_k\right)- \sum_{k=1}^{K} |W_k|^{\frac 43} W_k =
	t^{-3}\sum_{k=1}^{K} c_k |W_k|^{\frac 43}+{\mathbf R}_{\Sigma },
	\]
	where, for all $t\in I$,
	\begin{equation}\label{NLIN} 
	\left\| {\mathbf R}_{\Sigma }\right\|_{H^1} \lesssim t^{-4}.
	\end{equation}
\end{lemma}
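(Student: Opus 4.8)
The plan is to expand the nonlinearity pointwise and isolate the leading interaction term. Write $S=\sum_{k=1}^K W_k$ and $f(u)=|u|^{4/3}u$, so that we must control $f(S)-\sum_k f(W_k)$. The key geometric fact is that the solitons are \emph{well-separated}: for $k\neq k'$, the centers $\boldsymbol{\ell}_k t+\mathbf{y}_k(t)$ and $\boldsymbol{\ell}_{k'}t+\mathbf{y}_{k'}(t)$ drift apart at linear speed $|\boldsymbol{\ell}_k-\boldsymbol{\ell}_{k'}|t$ because $\boldsymbol{\ell}_k\neq\boldsymbol{\ell}_{k'}$, so under~\eqref{BS0} one has for $t\geq T_0$ large that on the support region of $W_k$ (say $\langle (x-\boldsymbol{\ell}_kt-\mathbf y_k)/\lambda_k\rangle\lesssim t^{1/2}$) the other solitons $W_{k'}$ are of size $\lesssim t^{-3}$ with all derivatives, while off that region $W_k$ itself decays. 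I would first record this separation estimate, together with the pointwise bounds $|W_k(t,x)|\lesssim \langle x_{\boldsymbol{\ell}_k}/\lambda_k\rangle^{-3}$ and analogous derivative bounds, which follow from the explicit form~\eqref{defW}, \eqref{defWbb} of $W$ and of the Lorentz-transformed soliton.

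Next I would run the pointwise expansion. On the region near soliton $W_k$, write $S=W_k+R_k$ with $R_k=\sum_{k'\neq k}W_{k'}$, and Taylor-expand $f(W_k+R_k)=f(W_k)+f'(W_k)R_k+O(|W_k|^{1/3}|R_k|^2 + |R_k|^{7/3})$; since $f'(W)=\frac73|W|^{4/3}$ and $f'(W_{\boldsymbol{\ell}})=\frac73 W_{\boldsymbol{\ell}}^{4/3}$ (by the identities used in Lemma~\ref{surZZ}), the linear term is $\frac73 |W_k|^{4/3}R_k=\frac73|W_k|^{4/3}\sum_{k'\neq k}W_{k'}$. The leading contribution comes from evaluating $W_{k'}$ at the center of $W_k$: at $x=\boldsymbol{\ell}_kt+\mathbf y_k$, one computes, using~\eqref{defWbb}, that $(\theta_{k'}W_{\boldsymbol{\ell}_{k'}})(t,\boldsymbol{\ell}_kt+\mathbf y_k)=\frac{\epsilon_{k'}}{(\lambda_{k'})^{3/2}}W\big((\ldots)\big)$ where the argument has magnitude $\sim |\boldsymbol{\sigma}_{k,k'}|\,t/\lambda_{k'}$ (after substituting the asymptotic parameters, up to $O(t^{-1})$ corrections from~\eqref{BS0}), so $W$ of it is $\sim(15)^{3/2}(\lambda_{k'}t/\lambda_{k'})^{-3}|\boldsymbol{\sigma}_{k,k'}|^{-3}(\lambda_{k'})^{3}$ — collecting constants gives exactly $t^{-3}\epsilon_{k'}(\lambda_{k'}^\infty)^{3/2}(15)^{3/2}|\boldsymbol{\sigma}_{k,k'}|^{-3}$, hence summing over $k'\neq k$ and multiplying by $\frac73|W_k|^{4/3}$ yields $t^{-3}c_k|W_k|^{4/3}$. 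Everything else — the difference between $W_{k'}$ at $x$ and at the center of $W_k$ (a gradient term, gaining one more power of $t^{-1}$ times a factor $\langle x_{\boldsymbol{\ell}_k}\rangle$ that is absorbed by the decay of $|W_k|^{4/3}$), the quadratic and higher Taylor remainders, and the contributions away from all soliton cores — is $O(t^{-4})$ pointwise with spatial decay, and I would bound it in $H^1$ using the pointwise bounds of Lemma~\ref{pr:AS1}-type and the Sobolev/Hölder inequalities~\eqref{sobolev}--\eqref{lor} after differentiating the expansion once (the $H^1$ norm requires controlling one derivative, which is routine since $f\in C^2$ away from zero but one must be mildly careful at points where $W_k+R_k$ could vanish — there $f$ is still $C^{1,1/3}$, enough for an $H^1$ bound).

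The main obstacle is the low regularity of the nonlinearity $f(u)=|u|^{4/3}u$ combined with the need for an $H^1$ (not merely $L^2$) estimate on ${\mathbf R}_\Sigma$: $f$ is only $C^2$ where $u\neq 0$, and $f''(u)\sim|u|^{-2/3}$ blows up at $u=0$, so the naive pointwise second-order Taylor remainder $|f''|\,|R_k|^2$ is not obviously controllable near the zero set of $S$ once one differentiates. The way around this is the standard one for subcritical-power nonlinearities: use the Hölder-type bound $|f(a+b)-f(a)-f'(a)b|\lesssim |a|^{1/3}|b|^2+|b|^{7/3}$ (valid for all real $a,b$, no smallness needed), and for the derivative $\nabla$ of the remainder use $|f'(a+b)-f'(a)|\lesssim (|a|^{1/3}+|b|^{1/3})|b|$ so that $\nabla$ of the error is $\lesssim (|W_k|^{1/3}+|R_k|^{1/3})(|R_k||\nabla W_k|+|W_k||\nabla R_k|)+\ldots$; each such term, after exploiting the separation (either $|R_k|\lesssim t^{-3}$ near the core of $W_k$, or $|W_k|$ small away from it, and using $|\boldsymbol{\ell}_k-\boldsymbol{\ell}_{k'}|\geq c>0$), is seen to be $\lesssim t^{-4}\langle x_{\boldsymbol{\ell}_k}\rangle^{-\text{(enough)}}$ summed over a finite set, hence in $H^1$ with norm $\lesssim t^{-4}$. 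Thus the hard part is purely bookkeeping: organizing space into the $K$ soliton regions plus the transition region and checking in each that the combined powers of $t^{-1}$ and the spatial weights close at $t^{-4}$; the geometric separation $\boldsymbol{\ell}_k\neq\boldsymbol{\ell}_{k'}$ is what makes this possible and is used crucially and repeatedly.
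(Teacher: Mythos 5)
Your proposal follows essentially the same route as the paper: split space into comoving neighborhoods of each soliton, Taylor-expand $f(W_k+R_k)$ with the Hölder remainder $O(|W_k|^{1/3}R_k^2+|R_k|^{7/3})$, freeze $W_{k'}$ at the center of $W_k$ and use the asymptotics $W(x)\approx 15^{3/2}|x|^{-3}$ to extract $t^{-3}c_k|W_k|^{4/3}$, and then differentiate the expansion for the $\dot H^1$ part. Two points need correction, one of which is a real quantitative issue. First, the comoving regions must have radius proportional to $t$, not $t^{1/2}$: the paper takes $B_k(t)=\{|x-\boldsymbol{\ell}_kt|\le\sigma t\}$ with $\sigma$ a fixed fraction of $\min_{k\ne k'}|\boldsymbol{\ell}_k-\boldsymbol{\ell}_{k'}|$. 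With your radius $t^{1/2}$ the exterior estimates do not close at $t^{-4}$: for instance $\|t^{-3}|W_k|^{4/3}\|_{L^2(|x-\boldsymbol{\ell}_kt|>t^{1/2})}\sim t^{-3}\cdot(t^{1/2})^{-3/2}=t^{-15/4}$, and the cross term $|W_k|^{4/3}|W_{k'}|$ over the annulus $t^{1/2}<|x-\boldsymbol{\ell}_kt|<\sigma t$ gives the same $t^{-15/4}$, which is larger than $t^{-4}$. The point is that the pairing of $\frac73|W_k|^{4/3}W_{k'}$ with its frozen value $t^{-3}c_k|W_k|^{4/3}$ (via the mean-value bound $\sup_{B_k}|\nabla W_{k'}|\lesssim t^{-4}$) must be exploited on the \emph{whole} cone $|x-\boldsymbol{\ell}_kt|\lesssim\sigma t$, so that in the remaining exterior every individual term is already $O(t^{-9/2})$ in $L^2$; this is exactly the paper's estimates~\eqref{inbwpp}--\eqref{step3}. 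Second, your stated obstruction is misdiagnosed: for $f(u)=|u|^{4/3}u$ one has $f''(u)=\frac{28}{9}\,\mathrm{sgn}(u)|u|^{1/3}$, which is continuous and vanishes at $u=0$ (it is $f'''$ that is singular), so $f\in C^{2,1/3}$ and the Hölder bounds $|f(a+b)-f(a)-f'(a)b|\lesssim|a|^{1/3}b^2+|b|^{7/3}$ and $|f'(a+b)-f'(a)|\lesssim(|a|^{1/3}+|b|^{1/3})|b|$ that you invoke are valid with no delicacy near the zero set; these are indeed what the paper uses for both the $L^2$ and the $\dot H^1$ parts. With the region size corrected, your argument is the paper's argument.
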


\begin{proof}
The proof is similar to that of Lemma 3.1 in~\cite{MMwave2}. Let $\sigma= \frac 1{10} \min_{k_{1}\ne k_{2}}|{\boldsymbol{\ell}}_{k_{1}}-{\boldsymbol{\ell}}_{k_{2}}|$ and 
$
B_k(t)=\{x,\ |x-{\boldsymbol{\ell}}_k t|\leq \sigma t\}$, $B(t)=\cup_k B_k(t)$.

First, we claim, $\forall k\in\{1,\dots,K\}$ and $k\ne k'$
\begin{equation}\label{inbwpp}
\|W_{k}^{\frac{7}{3}}\|_{L^{2}(B^{c}_{k})}\lesssim t^{-4},\quad \|W_{k}^{\frac{4}{3}}\|_{L^{2}(B^{c}_{k})}\lesssim t^{-\frac{9}{2}}
\end{equation}
and 
\begin{equation}\label{outbwpp}
 \||W_{k}|^{\frac{1}{3}}|W_{k'}|^{2}\|_{L^{2}(B_{k})}\lesssim t^{-\frac{9}{2}},\quad\|W_{k'}^{\frac{7}{3}}\|_{L^{2}(B_{k})}\lesssim t^{-\frac{9}{2}}
\end{equation}
Note that, $\forall x\in B^{c}_{k}$, we have
\begin{equation*}
|W_{k}|^{\frac{7}{3}}\lesssim \langle x-\boldsymbol{\ell}_{k}t \rangle^{-7}\lesssim t^{-4}\langle x-\boldsymbol{\ell}_{k}t \rangle^{-3}, \quad |W_{k}|^{\frac{4}{3}}\lesssim \langle x-\boldsymbol{\ell}_{k}t \rangle^{-4}\lesssim t^{-1}\langle x-\boldsymbol{\ell}_{k}t \rangle^{-3},
\end{equation*}
and from $x\mapsto\langle x \rangle^{-3} \in L^2({\mathbb{R}}^5)$, we obtain \eqref{inbwpp}. Note that,  $\forall x\in B_{k}$ and $k'\ne k$
\begin{equation}\label{outdW}
|W_{k}|^{\frac{1}{3}}|W_{k'}|^{2}\lesssim t^{-6}\langle x-\boldsymbol{\ell}_{k}t \rangle, \quad |W_{k'}|^{\frac{7}{3}}\lesssim t^{-\frac{9}{2}}
\end{equation}
Integrating \eqref{outdW} on $B_{k}(t)$, we obtain \eqref{outbwpp}.

Second, we claim, $\forall\ k\in\{1,\dots,K\}$,
\begin{equation}\label{taylor int}
\||\sum_{k'=1}^{K}W_{k'}|^{\frac{4}{3}}(\sum_{k'=1}^{K}W_{k'})-|W_{k}|^{\frac{4}{3}}W_{k}-\frac{7}{3}|W_{k}|^{\frac{4}{3}}\sum_{k'\ne k}W_{k'}\|_{L^{2}(B_{k})}\lesssim t^{-\frac{9}{2}}
\end{equation}
From Taylor expansion, we have,
\begin{equation}\label{taylor p.p}
\begin{aligned}
&\left|\sum_{k'=1}^{K}W_{k'}\right|^{\frac{4}{3}}(\sum_{k'=1}^{K}W_{k'})\\
&=|W_{k}|^{\frac{4}{3}}W_{k}+\frac{7}{3}|W_{k}|^{\frac{4}{3}}\sum_{k'\ne k}W_{k'}+O\left(|W_k|^{\frac 13} \sum_{k'\ne k}|W_{k'}|^2\right)+O\left(\sum_{k'\ne k}|W_{k'}|^{\frac 73}\right).
\end{aligned}
\end{equation}
Using \eqref{outbwpp} and \eqref{taylor p.p}, we obtain \eqref{taylor int}.

Third, we claim, $k'\neq k$,
\begin{equation}\label{step3}
\left\| |W_k|^{\frac 43} \left(W_{k'}(t,x)-W_{k'}(t,{\boldsymbol{\ell}}_k t) \right) \right\|_{L^2(B_k)} \lesssim t^{-4}.
\end{equation}
Indeed, for $x\in B_k$,
\[\left|W_{k'}(t,x)-W_{k'}(t,{\boldsymbol{\ell}}_k t) \right| \lesssim \sup_{B_k} |\nabla W_{k'}(t)| \cdot |x-{\boldsymbol{\ell}}_k t|
\lesssim t^{-4} |x-{\boldsymbol{\ell}}_k t|,
\]
and so,
\[
|W_k|^{\frac 43} \left|W_{k'}(t,x)-W_{k'}(t,{\boldsymbol{\ell}}_k t) \right| \lesssim
t^{-4} \langle x-{\boldsymbol{\ell}}_k t\rangle^{-3},
\]
which implies~\eqref{step3}.

Then, note that, from the explict expression \eqref{defW} of $W$, we have,
\begin{equation}\label{ W asy}
|W(x)-15^{\frac{3}{2}}|x|^{-3}|\lesssim |x|^{-5},\quad \mathrm{for}\ |x|\gg 1.
\end{equation}
Thus, using the assumption on the parameters \eqref{BS0} and the definition \eqref{defWk} of $W_{k'}$ and \eqref{ W asy}, we obtain
\begin{multline*}
W_{k'}(t,{\boldsymbol{\ell}}_kt) = \frac {\epsilon_{k'}}{\lambda_{k'}^{\frac 32}(t)}
W_{\ell_{k'}}\left(\frac{({\boldsymbol{\ell}}_k-{\boldsymbol{\ell}}_{k'})t-\mathbf y_{k'}(t)}{\lambda_{k'}(t)}\right) = \frac {\epsilon_{k'}}{(\lambda_{k'}^\infty)^{\frac 32}}
W_{\ell_{k'}}\left(\frac{({\boldsymbol{\ell}}_k-{\boldsymbol{\ell}}_{k'})t}{\lambda_{k'}^\infty}\right) +O(t^{-4})\\
= \frac {\epsilon_{k'}}{(\lambda_{k'}^\infty)^{\frac 32}}
W \left(\frac{{\boldsymbol{\sigma}}_{k,k'}t}{\lambda_{k'}^\infty}\right) +O(t^{-4})
= {15^{\frac 32}\epsilon_{k'}}{(\lambda_{k'}^\infty)^{\frac 32}}
|{\boldsymbol{\sigma}}_{k,k'}|^{-3} t^{-3} +O(t^{-4}).
\end{multline*}
Gathering above estimates, we obtain the $L^2$ estimate of ${\mathbf R}_{\Sigma}$ in~\eqref{NLIN}.

Now, we observe for $j\in \{1,\cdots,5\}$,
\begin{align*}
\partial_{x_{j}}{\mathbf R}_{\Sigma}
&=\frac{7}{3}\left|\sum_{k=1}^{K}W_{k}\right|^{\frac{4}{3}}\left(\sum_{k=1}^{K}\partial_{x_{j}}W_{k}\right)-\frac{7}{3}\sum_{k=1}^{K}\left(\left|W_{k}\right|^{\frac{4}{3}}\partial_{x_{j}}W_{k}\right)\\
&-\frac{4}{3}t^{-3}\sum_{k=1}^{K}c_{k}\left(\left|W_{k}\right|^{-\frac{2}{3}}W_{k}\right)\partial_{x_{j}}W_{k}.
\end{align*}
From Taylor expansion and the decay of $W$ and $\partial_{x_{j}}W$, we have, for $k\in\{1,\cdots,K\}$,
\begin{align*}
&\left|\sum_{k'=1}^{K}W_{k'}\right|^{\frac{4}{3}}\left(\sum_{k'=1}^{K}\partial_{x_{j}}W_{k'}\right)\\
&=\big|W_{k}\big|^{\frac{4}{3}}\partial_{x_{j}}W_{k}+\frac{4}{3}\left(\big(\big|W_{k}\big|^{-\frac{2}{3}}W_{k}\big)\partial_{x_{j}}W_{k}\right)\big(\sum_{k'\ne k}W_{k'}\big)\\
&+O\big(\big|W_{k}\big|\sum_{k'\ne k}\big|W_{k'}\big|^{\frac{4}{3}}\big)+O\big(\big|W_{k}\big|^{\frac{4}{3}}\sum_{k'\ne k}\big|\partial_{x_{j}}W_{k'}\big|\big)+O\big(\sum_{k'\ne k}\big|W_{k'}\big|^{\frac{7}{3}}\big).
\end{align*}
And then, using similar arguments, we prove $\dot{H}^{1}$ estimate of ${\mathbf R}_{\Sigma}$ in~\eqref{NLIN}.
\end{proof}
\subsection{The approximate solution $\vec{\mathbf W}$}
We recall a result on approximate solution $\vec{\mathbf W}$ in 5D from \cite{MMwave2}. To remove the main interaction terms $\sum_{k=1}^{K}c_k t^{-3} |W_k|^{\frac 43}$ computed in Lemma~\ref{le:int}, we define suitably rescaled versions of the function $v_{\bell_k}$ given by Lemma~\ref{pr:AS1}.
Let
\begin{align}
v_k(t,x) & = \frac 1{ \lambda_k^3} v_{\bell_k}\left( \frac t{ \lambda_k} , \frac {x-\mathbf{y}_k}{ \lambda_k}\right),
\label{def.vk}\\
z_k(t,x) & = \frac 1{ \lambda_k^4} (\partial_tv_{\bell_k})\left( \frac t{ \lambda_k} , \frac {x-\mathbf{y}_k}{ \lambda_k}\right)
+ \frac{\kappa_{\bell_k}\epsilon_k}{2 \lambda_k^{\frac 12}t^2}\Lambda_k W_k(t,x)
\label{def.wk} \end{align}
where $\Lambda_k = \frac 32 + (x -{\boldsymbol{\ell}}_k t -\mathbf{y}_k)\cdot \nabla$,
and
\[
\vec v_k = \left(\begin{array}{c} v_k 
\\[.2cm]z_k \end{array}\right),\quad 
\kappa_{\bell_k} = - (1-|\bell_k|^2) \frac{(W^{\frac 43},\Lambda W)}{\|\Lambda W\|_{L^2}^2},\quad
a_k = - \frac {c_k\kappa_{\bell_k}\epsilon_k}2.
\]
Set
\begin{equation}\label{def:Wapp}
\vec{\mathbf W} = \left(\begin{array}{c} {\mathbf W}
\\[.2cm] {\mathbf X} \end{array}\right) =
\sum_{k=1}^{K} \left( \vec W_k + c_k\vec v_k \right) .
\end{equation}
\begin{lemma}\label{le:43}
 The function $\vec{\mathbf W}$ satisfies on $I\times {\mathbb{R}}^5$ 
	\begin{equation}\label{syst_WX}\left\{\begin{aligned}
	\partial_t{\mathbf W} & = {\mathbf X} - {\rm Mod}_{{\mathbf W}}-{\rm Mod}_{{\mathbf{V}}}\\
	\partial_t	{\mathbf X} & 
	= \Delta {\mathbf W} +|{\mathbf W}|^{\frac 43} {\mathbf W} - {\rm Mod}_{{\mathbf X}}-{\rm Mod}_{{\mathbf{Z}}}- {\mathbf R}_{{\mathbf X}}
	\end{aligned}\right.\end{equation} 
	where
	\begin{align*}
	{\rm Mod}_{{\mathbf W}} &= \sum_{k=1}^{K} \left( \frac{\dot\lambda_k}{\lambda_k} - \frac{a_k}{\lambda_k^{\frac 12}t^2} \right) \Lambda_k W_k
	+ \sum_{k=1}^{K} \dot {\mathbf{y}}_k \cdot \nabla W_k \\
	{\rm Mod}_{{\mathbf X}} &= 
	- \sum_{k=1}^{K} \left( \frac{\dot\lambda_k}{\lambda_k} - \frac{a_k}{\lambda_k^{\frac 12}t^2} \right) ({\boldsymbol{\ell}}_k \cdot \nabla) \Lambda_k W_k 
	- \sum_{k=1}^{K} ({\dot {\mathbf{y}}}_k \cdot \nabla) ( {\boldsymbol{\ell}}_k \cdot \nabla) W_k ,\\
\end{align*}
and
\begin{align*}
	{\rm Mod}_{{\mathbf{V}}} &= \sum_{k=1}^{K}\left(3 \frac{\dot \lambda_k}{\lambda_k^4} v_{{\bell}_k} \left( \frac t{ \lambda_k} , \frac {x-\mathbf{y}_k}{ \lambda_k}\right)
	+ \frac{\dot \lambda_k}{\lambda_k^4} \frac{t}{\lambda_k}A_{{\bell}_k} v_{{\bell}_k} \left( \frac t{ \lambda_k} , \frac {x-\mathbf{y}_k}{ \lambda_k}\right)\right) \\ 
	& \ \ +\sum_{k=1}^{K}\left(\frac{\dot \lambda_k}{\lambda_k^4}\left(\frac{x-{\boldsymbol{\ell}}_k t-\mathbf{y}_k}{\lambda_k}\right)\cdot \nabla v_{{\bell}_k} \left( \frac t{\lambda_k} , \frac {x-\mathbf{y}_k}{ \lambda_k}\right) + \frac{\dot {\mathbf{y}}_k}{\lambda_k^4} \cdot\nabla v_{{\bell}_k} \left( \frac t{ \lambda_k} , \frac {x-\mathbf{y}_k}{ \lambda_k}\right)\right) ,\\
	{\rm Mod}_{{\mathbf{Z}}} &= \sum_{k=1}^{K}\left(\frac{\dot {\mathbf{y}}_k}{\lambda_k^5} \cdot \nabla \partial_t v_{{\bell}_{k}} \left( \frac t{ \lambda_k} , \frac {x-\mathbf{y}_k}{ \lambda_k}\right)+4 \frac{\dot \lambda_k}{\lambda_k^5} \partial_t v_{{\bell}_k}\left( \frac t{ \lambda_k} , \frac {x-\mathbf{y}_k}{ \lambda_k}\right) \right)
	\\ &\quad 
	+ \sum_{k=1}^{K}\left(\frac{\dot \lambda_k}{\lambda_k^5} \frac{t}{\lambda_k} A_{\bell_k} \partial_t v_{\bell_k}\left( \frac t{ \lambda_k} , \frac {x-\mathbf{y}_k}{ \lambda_k}\right) 
	+ \frac{\dot \lambda_k}{\lambda_k^5} \left(\frac{x-{\boldsymbol{\ell}}_{k} t-\mathbf{y}_k}{\lambda_k}\right)\cdot
	\nabla \partial_t v_{\bell_k} \left( \frac t{ \lambda_k} , \frac {x-\mathbf{y}_k}{ \lambda_k}\right)\right) 
	\\&\quad  +\sum_{k=1}^{K}\left(\frac {\kappa_{\bell_k}\epsilon_k}{2t^2\lambda_k^{\frac 12}} \frac{\dot\lambda_k}{\lambda_k}
	\left(\frac 12 \Lambda_k W_k+\Lambda_k^2 W_k\right)
	+\frac {\kappa_{\bell_k}\epsilon_k }{2t^2\lambda_k^{\frac 12}} \dot{\mathbf{y}}_k\cdot\nabla\Lambda_k W_k\right),
\end{align*}
	\begin{align}
	\vec{\mathbf{R}}= \left(\begin{array}{c} 0
	\\[.2cm] {\mathbf R}_{{\mathbf X}}\end{array}\right),\quad 
	\|\vec{\mathbf{R}}\|_{\dot H^1\times L^2} +\|\nabla \vec{\mathbf{R}}\|_{\dot H^1\times L^2}\lesssim t^{-4+\delta}.
	\label{RWX}\end{align}
	Moreover, for all $0<\delta< 1$,
	\begin{equation}\label{e:WX}\begin{aligned}
	&|{\mathbf W}|+\langle x-{\boldsymbol{\ell}}_k t\rangle |\nabla {\mathbf W}|
	\lesssim \sum_{k=1}^{K} \left( \langle x-{\boldsymbol{\ell}}_k t\rangle^{-3}+t^{-1}\langle x-{\boldsymbol{\ell}}_k t\rangle^{-3+\delta}\right),\\
	&|{\mathbf X}|\lesssim \sum_{k=1}^{K} \left( \langle x-{\boldsymbol{\ell}}_k t\rangle^{-4}+t^{-2}\langle x-{\boldsymbol{\ell}}_k t\rangle^{-3+\delta}\right).
	\end{aligned}\end{equation}
\end{lemma}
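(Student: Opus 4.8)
The plan is to derive the system \eqref{syst_WX} by inserting the definition \eqref{def:Wapp} of $\vec{\mathbf W}$ into the wave equation in first-order form, differentiating term by term, and carefully sorting the resulting terms into four groups: (i) the modulation terms ${\rm Mod}_{\mathbf W}$, ${\rm Mod}_{\mathbf X}$ coming from the $t$-dependence of $\lambda_k,\mathbf y_k$ through the soliton profiles $W_k$; (ii) the modulation terms ${\rm Mod}_{\mathbf V}$, ${\rm Mod}_{\mathbf Z}$ coming from the same $t$-dependence acting on the correction profiles $v_k$ and on the extra $\Lambda_k W_k$ piece in $z_k$; (iii) the cancellation of the main nonlinear interaction $t^{-3}\sum_k c_k|W_k|^{4/3}$ against the source terms $f_{\bell_k}+g_{\bell_k}$ absorbed by the $v_{\bell_k}$ equation from Lemma~\ref{pr:AS1}; and (iv) the remainder $\mathbf R_{\mathbf X}$, whose $H^1$ (and $\nabla H^1$) smallness of order $t^{-4+\delta}$ is what we must check.

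First I would record that $\partial_t(\theta_k G) = \theta_k(-\Lambda_k G/\lambda_k)\,\dot\lambda_k/\lambda_k \cdot(\text{rescaled})$ — more precisely the $t$-derivative hitting $W_k=\theta_k W_{\bell_k}$ produces, via the chain rule on the parameters $\lambda_k(t),\mathbf y_k(t)$ and on the explicit $-\bell_k t$ translation, the terms $-\bell_k\cdot\nabla W_k$ (which builds $X_k$, giving the leading $\partial_t W_k = X_k + \dots$), $-(\dot\lambda_k/\lambda_k)\Lambda_k W_k$, and $-\dot{\mathbf y}_k\cdot\nabla W_k$. The $a_k/(\lambda_k^{1/2}t^2)$ shift inside ${\rm Mod}_{\mathbf W}$ is precisely what is needed so that when $\partial_t{\mathbf X}$ is computed, the term $\frac{\kappa_{\bell_k}\epsilon_k}{2\lambda_k^{1/2}t^2}\Lambda_k W_k$ appearing in $z_k$ (which is the second component of $c_k\vec v_k$) combines with the $\Lambda W$-part of the source $F=W^{4/3}+\kappa_{\bell}\Lambda W$ from Lemma~\ref{pr:AS1} — by the definitions $\kappa_{\bell_k}$, $a_k=-c_k\kappa_{\bell_k}\epsilon_k/2$ this bookkeeping closes exactly. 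For the second equation, I would compute $\partial_t{\mathbf X}=\sum_k(\partial_t X_k + c_k\partial_t z_k)$, use $\Delta W_k + |W_k|^{4/3}W_k = -\bell_k\cdot\nabla(\bell_k\cdot\nabla W_k)$ from \eqref{eqWbb} rescaled (so that the linear+nonlinear action on the sum reproduces $\Delta{\mathbf W}+|{\mathbf W}|^{4/3}{\mathbf W}$ up to the interaction term of Lemma~\ref{le:int}), and then invoke Lemma~\ref{pr:AS1} for $v_{\bell_k}$: the equation $\partial_t^2 v_{\bell_k}-\Delta v_{\bell_k}-\frac73 w_{\bell_k}^{4/3}v_{\bell_k} = f_{\bell_k}+g_{\bell_k}+\mathcal E_{\bell_k}$, rescaled by $\theta_k/\lambda_k^{?}$ and multiplied by $c_k$, supplies exactly $c_k t^{-3}|W_k|^{4/3}$ (the $F$-part, since $\kappa_{\bell_k}c_k t^{-3}\Lambda W$-part is the one paired off above) plus a genuinely small error.

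The remainder $\mathbf R_{\mathbf X}$ then consists of: (a) $\mathbf R_\Sigma$ from Lemma~\ref{le:int}, bounded by $t^{-4}$ in $H^1$ (note Lemma~\ref{le:int} gives $H^1$, hence also controls $\nabla$ at the stated order after the analogous argument sketched there); (b) the rescaled $\mathcal E_{\bell_k}$ terms, bounded in $H^1$ by $t^{-4+\delta}$ via \eqref{pres1} (the spatial decay $\langle x_{\bell}\rangle^{-3}$, resp. $^{-4}$, $^{-5}$ for zero, one, two derivatives is integrable over $\mathbb R^5$ after rescaling, and the extra $\nabla$ is handled by the second and third lines of \eqref{pres1}); (c) cross terms from expanding $|{\mathbf W}|^{4/3}{\mathbf W}$ with ${\mathbf W}=\sum_k(W_k+c_k v_k)$ beyond the leading $|\sum W_k|^{4/3}\sum W_k$ — these involve at least one factor $c_k v_k$ of size $t^{-2}$ in $\dot H^1$ (and $t^{-3/2+\delta}$ in $L^2$) multiplying $W_k^{4/3}$ or lower powers, and using the Hölder/Sobolev bounds \eqref{holder1}, \eqref{lor} together with the pointwise decay \eqref{e:n40} one gets at worst $t^{-4+\delta}$; (d) the discrepancy between $w_{\bell_k}^{4/3}$ appearing in Lemma~\ref{pr:AS1} and $W_k^{4/3}$ times $v_k$, which is an exact match after rescaling since $w_{\bell_k}$ is the rescaled $W$. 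Finally \eqref{e:WX} follows from the explicit decay \eqref{defW}, \eqref{ W asy} of $W_{\bell_k}$ and $\nabla W_{\bell_k}$ together with the pointwise bounds \eqref{e:n40}–\eqref{rk33} on $v_{\bell_k}$ and its derivatives, summed over $k$. The main obstacle I anticipate is not any single estimate but the sheer bookkeeping in (c): one must verify that every cross term in the Taylor expansion of the nonlinearity, and every term produced by $\partial_t$ hitting the various rescaled arguments, either lands in one of the explicitly named ${\rm Mod}$ quantities or is genuinely of order $t^{-4+\delta}$ in $\dot H^1\cap\{\nabla\cdot\in\dot H^1\}$ — in particular the terms where a single spatial derivative meets a slowly decaying soliton tail are the borderline ones and require using the gain from $v_k$ being $O(t^{-2})$ rather than $O(t^{-1})$.
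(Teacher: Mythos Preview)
Your sketch is correct and follows the same direct-computation approach as the paper, which simply refers to Lemma~4.3 of~\cite{MMwave2} and notes that the argument there (stated for collinear speeds and $K=2$) extends verbatim once Lemmas~\ref{pr:AS1} and~\ref{le:int} are available in the present generality. You have essentially reconstructed that computation in outline; the bookkeeping you flag in (c) is indeed the main labor but contains no hidden difficulty beyond the pointwise bounds \eqref{e:n40}--\eqref{rk33} and the interaction estimate \eqref{NLIN}.
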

\begin{proof}
See proof of Lemma 4.3 in~\cite{MMwave2}. Although the original argument in this lemma only holds for ${\bell}_{k}=\ell_{k} \mathbf{e}_{1}$ and $K=2$, after using Lemma 2.3 and Lemma 3.1, we easily check that the argument still holds for any $K\ge2$ and any ${\bell}_{k} \in \mathbb{R}^{5}$ with $|{\bell}_{k}|<1$.
\end{proof}
\section{Decomposition around refined approximate solution}
We prove in this section a general decomposition around refined approximate solution. Let $K\ge 2$ and for any $k\in \{1,\dots,K\}$, let $\lambda_{k}^{\infty}>0$, $\mathbf{y}_{k}^{\infty}\in \mathbb{R}^{5}$, $\bell_{k}\in \mathbb{R}^{5}$, $|\bell_{k}|< 1$ with $\bell_{k}\ne \bell_{k'}$ for $k'\ne k$.
 For $\vec{G}=(G,H)$, set
 \[
 (\theta_k^\infty G)(t,x) = \frac {\epsilon_k}{(\lambda_k^\infty)^{\frac 32}} G\left(\frac {x -{\boldsymbol{\ell}}_k t -\mathbf{y}_k^\infty}{\lambda_k^\infty}\right),
 \quad 
 \vec \theta_k^\infty \vec G 
 = \left(
 \begin{array}{c}\theta_k^\infty G \\[.2cm] 
 \displaystyle \frac {\theta_k^\infty} {\lambda_k^\infty} H \end{array}\right),\]
 \[
 W_k^\infty =\theta_k^\infty W_{{\boldsymbol{\ell}}_k}, \quad X_k^\infty=-{\boldsymbol{\ell}}_k\cdot \nabla W_k^\infty,\quad \vec W_k^\infty =\left(\begin{array}{c} W_k^\infty \\[.2cm] \displaystyle X_k^\infty \end{array}\right),\quad \Lambda^{\infty}_{k}=\frac{3}{2}+(x-\bell_{k}t-\mathbf{y}_{k}^{\infty}).
 \]
 We also set
 \begin{align*}
 &v_{k}^{\infty}=\frac{1}{(\lambda_{k}^{\infty})^{3}}v_{\bell_{k}}\left(\frac{t}{\lambda_{k}^{\infty}},\frac{x-\mathbf{y}^{\infty}_{k}}{\lambda_{k}^{\infty}}\right),\\
 &z_{k}^{\infty}=\frac{1}{(\lambda_{k}^{\infty})^{4}}(\partial_{t}v_{\bell_{k}})\left(\frac{t}{\lambda_{k}^{\infty}},\frac{x-\mathbf{y}^{\infty}_{k}}{\lambda_{k}^{\infty}}\right)+\frac{\kappa_{\bell_k}\epsilon_k}{2 (\lambda^{\infty}_{k})^{\frac 12}t^2}\Lambda^{\infty}_{k} W^{\infty}_{k}(t,x),\quad \vec{V}_{k}^{\infty}=\left(\begin{array}{c} v_{k}^{\infty}\\[.2cm] 
 \displaystyle z_{k}^{\infty}\end{array}\right).
 \end{align*}
\begin{lemma}[Properties of the decomposition]\label{le:4} 
	There exist $T_0\gg 1$ and $0<\delta_0\ll 1$ such that if $u(t)$ is a solution of~\eqref{wave} which satisfies on $I$,
	\begin{equation}\label{hyp:4}
	\left\|\vec u - \sum_{k=1}^{K} \left( \vec W_k^\infty + c_k\vec V_k^{\infty} \right) \right\|_{\dot H^1\times L^2}< \delta_0,
	\end{equation}
	then there exist $\mathcal C^1$ functions $\lambda_k>0$, $\mathbf{y}_k$ on $I$ such that, 
	$\vec \varepsilon(t)$ being defined by
	\begin{equation}
	\label{eps2}
	\vec \varepsilon=\left(\begin{array}{c}\varepsilon \\ \eta \end{array}\right),\quad 
	\vec u = \left(\begin{array}{c} u \\ \partial_t u\end{array}\right) =
	\vec {\mathbf W} + \vec \varepsilon, 
	\end{equation}
	the following hold on $I$, for $k\in\{1,\dots,K\}$.
	\begin{enumerate}[label=\emph{(\roman*)}]
		\item\emph{First properties of the decomposition.} For $j=1,\ldots,5$, 
		\begin{equation}
		\label{ortho}
		\pshbbk {\varepsilon}{ \Lambda_k W_k}=
		\pshbbk {\varepsilon}{ \partial_{x_j} W_k}= 0,
		\end{equation}
		\begin{equation}\label{bounds}
		|\lambda_k -\lambda_k^{\infty}|
		+|\mathbf{y}_k -\mathbf{y}_k^\infty|
		+\|\vec \varepsilon \|_{\dot H^1\times L^2}
		\lesssim \left\|\vec u -\sum_{k=1}^{K} \left( \vec W_k^\infty + c_k\vec V_k^{\infty} \right) \right\|_{\dot H^1\times L^2}
		\end{equation}
		\item\emph{Equation of $\vec \varepsilon$.} 
		\begin{equation}\label{syst_e}\left\{\begin{aligned}
		\partial_t \varepsilon & = \eta + {\rm Mod}_{{\mathbf W}}+{\rm Mod}_{{\mathbf V}}\\
		\partial_t \eta & 
		= \Delta \varepsilon +\left| {\mathbf W} + \varepsilon\right|^{\frac 43} ({\mathbf W} + \varepsilon)
		- |{\mathbf W}|^{\frac 43}{\mathbf W} + {\rm Mod}_{{\mathbf X}}+{\rm Mod}_{{\mathbf Z}}+ {\mathbf R}_{{\mathbf X}} .
		\end{aligned}\right.\end{equation} 
		\item\emph{Parameter estimates.} For any $0<\delta<1$, \begin{equation}
		\label{le:p}
		\left|\frac {\dot \lambda_k }{\lambda_k} - \frac{a_k}{\lambda_k^{\frac 12}t^2}\right|+|\dot {\mathbf{y}}_k|
		\lesssim \left\|\vec \varepsilon\right\|_{\dot H^1\times L^2}+\frac{1}{t^{4}}.
		\end{equation}
		\item\emph{Unstable directions.} Let
		$z_k^{\pm} = (\vec \varepsilon ,\vec {\tilde\theta}_k \vec Z_{{\boldsymbol{\ell}}_k}^{\pm})$.
		Then, for any $0<\delta<1$, 
		\begin{equation}\label{le:z}
		\left| \frac d{dt} z_k^{\pm}\mp\frac{\sqrt{\lambda_0}}{\lambda_k}(1-|{\boldsymbol{\ell}}_k|^2)^{\frac 12}z_k^{\pm} \right|
		\lesssim \left\| {\vec\varepsilon }\right\|_{\dot H^1\times L^2}^2 + t^{-1} \left\|{\vec\varepsilon }\right\|_{\dot H^1\times L^2} + t^{-4+\delta}
		\end{equation}
	\end{enumerate}
\end{lemma}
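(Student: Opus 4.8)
The plan is to follow the standard modulation argument, adapting the construction of Lemma 4.1-type results from~\cite{MMwave1,MMwave2} to the $K$-soliton setting with non-collinear Lorentz-boosted solitons. First I would set up the modulation via the implicit function theorem. Consider the map $\Phi$ which, given a profile $\vec u$ and parameters $(\lambda_k,\mathbf{y}_k)_{k=1}^K \in (0,\infty)^K\times(\mathbb{R}^5)^K$, returns the vector of inner products $\big(\pshbbk{\varepsilon}{\Lambda_k W_k},\ \pshbbk{\varepsilon}{\partial_{x_j}W_k}\big)_{k,j}$ where $\varepsilon = u - \mathbf{W}$ and $\mathbf{W}$ is built from these parameters as in~\eqref{def:Wapp}. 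At the reference point $\vec u = \sum_k(\vec W_k^\infty + c_k\vec V_k^\infty)$ with parameters $(\lambda_k^\infty,\mathbf{y}_k^\infty)$, one has $\varepsilon = O(t^{-2})$ (the correction terms $\vec v_k$ are not exactly annihilated, but are small), so $\Phi$ is close to zero there. The key computation is that the Jacobian of $\Phi$ with respect to $(\lambda_k,\mathbf{y}_k)$ is, to leading order, block-diagonal in $k$ (because solitons $W_k$, $W_{k'}$ are spatially separated by $\sim\sigma t$ for $k\ne k'$, so their cross inner products are $O(t^{-3})$ and negligible), with each block essentially the Gram matrix of $\{\Lambda_k W_k,\partial_{x_j}W_k\}$ in the $\dot H^1_{\bell_k}$ inner product, which is nondegenerate by the structure of $L_{\bell_k}$ in Lemma~\ref{surZZ}(i)--(ii) (note $\partial_{\lambda_k}W_k$ is proportional to $-\lambda_k^{-1}\Lambda_k W_k$ plus lower order, and $\partial_{\mathbf{y}_k}W_k = -\lambda_k^{-1}\nabla W_k$). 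Hence the Jacobian is invertible for $T_0$ large, and the implicit function theorem yields unique $\mathcal C^1$ functions $\lambda_k,\mathbf{y}_k$ on $I$ satisfying~\eqref{ortho}; the bound~\eqref{bounds} follows from the quantitative version of the implicit function theorem together with the smallness hypothesis~\eqref{hyp:4} and the $O(t^{-2})$ control on the discrepancy between $\vec V_k^\infty$ and $\vec v_k$.

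Next, for part (ii): the equation~\eqref{syst_e} for $\vec\varepsilon$ is obtained by subtracting the system~\eqref{syst_WX} of Lemma~\ref{le:43} from the equation~\eqref{wave} written for $\vec u = \vec{\mathbf W}+\vec\varepsilon$; this is purely algebraic once one records that $\partial_t u = \partial_t\mathbf{W} + \partial_t\varepsilon$ and uses the identity $|\mathbf{W}+\varepsilon|^{4/3}(\mathbf{W}+\varepsilon) - |\mathbf{W}|^{4/3}\mathbf{W}$ as the nonlinear remainder, with the modulation terms $\mathrm{Mod}_{\mathbf{W}},\mathrm{Mod}_{\mathbf{V}},\mathrm{Mod}_{\mathbf{X}},\mathrm{Mod}_{\mathbf{Z}}$ and ${\mathbf R}_{\mathbf X}$ carried over directly from Lemma~\ref{le:43}.

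For part (iii), the parameter estimates: differentiate each orthogonality relation in~\eqref{ortho} in $t$, substitute the $\vec\varepsilon$-equation~\eqref{syst_e}, and isolate the terms containing $\dot\lambda_k$ and $\dot{\mathbf{y}}_k$. The dominant contribution comes from $(\mathrm{Mod}_{\mathbf W},\Lambda_k W_k)_{\dot H^1_{\bell_k}}$ and analogous pairings, which produce the same nondegenerate Gram matrix as above acting on the vector $\big(\tfrac{\dot\lambda_k}{\lambda_k}-\tfrac{a_k}{\lambda_k^{1/2}t^2},\dot{\mathbf{y}}_k\big)$; inverting it bounds that vector by the remaining terms. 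Those remaining terms are: the nonlinear term $\big(|\mathbf{W}+\varepsilon|^{4/3}(\mathbf{W}+\varepsilon)-|\mathbf{W}|^{4/3}\mathbf{W} - f'(\mathbf{W})\varepsilon\big)$ paired against $\Lambda_k W_k$, which is $O(\|\varepsilon\|_{\dot H^1}^2)$ by~\eqref{holder1}; the linear term $f'(\mathbf{W})\varepsilon$ paired against $\Lambda_k W_k$, which one handles by noting $L_{\bell_k}(\Lambda_k W_k)=0$ so only the cross-soliton part of $f'(\mathbf{W})-f'(W_k)$ and the $\vec v_k$-correction part survive, giving $O(t^{-3}\|\varepsilon\|_{\dot H^1})$; the terms from $\mathrm{Mod}_{\mathbf V},\mathrm{Mod}_{\mathbf Z}$, which carry an extra factor $t^{-2}$ or a factor $|\dot\lambda_k|+|\dot{\mathbf y}_k|$ that is absorbed by the left-hand side for $T_0$ large; and ${\mathbf R}_{\mathbf X}$ paired against $\Lambda_k W_k$, which is $O(t^{-4+\delta})$ by~\eqref{RWX}. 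Collecting these yields~\eqref{le:p}. Finally, part (iv): compute $\tfrac{d}{dt}z_k^\pm$ by differentiating $z_k^\pm = (\vec\varepsilon,\vec{\tilde\theta}_k\vec Z_{\bell_k}^\pm)$, using~\eqref{syst_e} and the relation $-H_{\bell_k}J(\vec Z_{\bell_k}^\pm) = \pm\sqrt{\lambda_0}(1-|\bell_k|^2)^{1/2}\vec Z_{\bell_k}^\pm$ from Lemma~\ref{surZZ}(ii); the main linear part reproduces $\pm\tfrac{\sqrt{\lambda_0}}{\lambda_k}(1-|\bell_k|^2)^{1/2}z_k^\pm$ after accounting for the $\lambda_k$-rescaling, and the error is again the sum of a quadratic-in-$\varepsilon$ contribution, a $t^{-1}\|\varepsilon\|$ contribution from soliton interactions and from $\dot\lambda_k,\dot{\mathbf y}_k$ acting on $\vec{\tilde\theta}_k\vec Z_{\bell_k}^\pm$ (using part (iii)), and a $t^{-4+\delta}$ contribution from ${\mathbf R}_{\mathbf X}$.

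The main obstacle is the precise bookkeeping of the cross-soliton interaction terms in both the modulation equations and the evolution of $z_k^\pm$: one must verify that every pairing of a term localized near soliton $k'$ against a function localized near soliton $k$ ($k\ne k'$) decays like $t^{-3}$ or faster, which relies on the spatial separation $|x-\bell_k t|+|x-\bell_{k'}t|\gtrsim \sigma t$ together with the pointwise decay bounds~\eqref{e:WX} and~\eqref{e:n40}, and that the $\vec v_k$-correction terms (which are only $O(t^{-2})$ in $\dot H^1$ but sit on top of soliton $k$) do not spoil the invertibility of the leading Gram matrix — this is where the factor $t^{-2}$ smallness and the largeness of $T_0$ are essential. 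The non-collinearity of the $\bell_k$ plays no role here beyond ensuring $\sigma>0$; it becomes delicate only later, in the choice of $\vec\chi$ for the energy functional.
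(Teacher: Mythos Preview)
Your overall strategy matches the paper's proof step by step: implicit function theorem for (i), algebraic subtraction of~\eqref{syst_WX} from~\eqref{wave} for (ii), differentiation of the orthogonality relations for (iii), and the eigenvector identity $-H_{\bell_k}J\vec Z_{\bell_k}^\pm=\pm\sqrt{\lambda_0}(1-|\bell_k|^2)^{1/2}\vec Z_{\bell_k}^\pm$ for (iv).

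There is one concrete bookkeeping slip in your outline of (iii). The orthogonality conditions~\eqref{ortho} involve only $\varepsilon$, not $\eta$, so differentiating them uses only the \emph{first} line of~\eqref{syst_e}, namely $\partial_t\varepsilon=\eta+\mathrm{Mod}_{\mathbf W}+\mathrm{Mod}_{\mathbf V}$. Consequently the nonlinear term $f(\mathbf W+\varepsilon)-f(\mathbf W)-f'(\mathbf W)\varepsilon$, the linear piece $f'(\mathbf W)\varepsilon$, and $\mathbf R_{\mathbf X}$ never enter this computation; your list of ``remaining terms'' is not the right one. What actually appears is the direct pairing $(\eta,\Lambda_k W_k)_{\dot H^1_{\bell_k}}$ (this is the source of the linear-in-$\|\vec\varepsilon\|$ bound on the right of~\eqref{le:p}), together with the terms from $\partial_t(\Lambda_k W_k)$ and from $\mathrm{Mod}_{\mathbf V}$, the latter contributing the $t^{-4}$. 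The cancellation $L_{\bell_k}(\Lambda_k W_k)=0$ plays no role here. This does not change the structure of the argument, but your proof as written would not close because you are tracking the wrong error terms.

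For (iv) you should also make explicit the algebraic cancellation $(\vec Z_{\bell_k}^{\Lambda},\vec Z_{\bell_k}^{\pm})=(\vec Z_{\bell_k}^{\nabla_j},\vec Z_{\bell_k}^{\pm})=0$ from Lemma~\ref{surZZ}(ii): without it, the $\vec{\mathrm{Mod}}_1$ contribution would be $O(\|\vec\varepsilon\|+t^{-4})$ via (iii), which is too large for~\eqref{le:z}; with the cancellation only the cross-soliton pairings survive and these are $O(t^{-3})$, yielding the required $t^{-1}\|\vec\varepsilon\|$.
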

\begin{proof}
	\textbf{Step 1.} Decomposition. The existence of parameters ${\lambda_k}$ and ${\mathbf y}_k$ such that~\eqref{ortho} and~\eqref{bounds} hold is proved similarly as~(i) of Lemma~3.1 in~\cite{MMwave1}.
	\smallbreak
	
	\textbf{Step 2.} Equation of $\vec \varepsilon$.
We formally derive the equation of $\vec{\varepsilon}(t)$, $\lambda_{k}(t)$ and $\mathbf{y}_{k}(t)$ from~\eqref{wave} and~\eqref{syst_WX}.
\begin{equation*}
\varepsilon_{t}=\partial_{t}u-\partial_{t}\mathbf{W}=\eta+\mathbf{X}-\partial_{t}\mathbf{W}=\eta+{\rm Mod}_{{\mathbf{W}}}+{\rm Mod}_{{\mathbf{V}}}.
\end{equation*}
	Second, since $\eta=\partial_t u-{\mathbf X}$, we have
	\begin{align*}
	\partial_t \eta & = \partial_t ^2 u -\partial_t {\mathbf X} =\Delta u +|u|^{\frac 43} u -\Delta{\mathbf W} -|{\mathbf W}|^{\frac 43} {\mathbf W} +{\rm Mod}_{\mathbf X} +{\rm Mod}_{{\mathbf{Z}}}+ \mathbf{R}_{\mathbf X}\\
	& = \Delta \varepsilon +|{\mathbf W}+\varepsilon|^{\frac 43} ({\mathbf W}+\varepsilon) - |{\mathbf W}|^{\frac 43} {\mathbf W} + {\rm Mod}_{{\mathbf X}} +{\rm Mod}_{{\mathbf{Z}}}+ \mathbf{R}_{\mathbf X}.
	\end{align*}
	We also denote
	\begin{align*}
	{\mathbf R}_{\rm NL} &= \left| {\mathbf W} + \varepsilon\right|^{\frac 43} ({\mathbf W} + \varepsilon)
	- |{\mathbf W}|^{\frac 43}{\mathbf W} -\frac 73 \sum_k |W_k|^{\frac 43}\varepsilon ={\mathbf{R}}_{1}+ {\mathbf{R}}_{2}, \\
	{\mathbf{R}}_{1} &= \frac 73 \left( |{\mathbf W}|^{\frac 43} - \sum_k |W_k|^{\frac 43}\right) \varepsilon,\quad 
	{\mathbf{R}}_{2} = \left| {\mathbf W} + \varepsilon\right|^{\frac 43} ({\mathbf W} + \varepsilon)
	- |{\mathbf W}|^{\frac 43}{\mathbf W} - \frac 73 |{\mathbf W}|^{\frac 43} \varepsilon ,
	\end{align*}
	and
	\[
	\vec {\mathcal L} = \left(\begin{array}{cc}0 & 1 \\
	\Delta + \frac 73 \sum_k |W_k|^{\frac 43} & 0\end{array}\right),\quad
	\vec {\rm Mod}_{1}= \left(\begin{array}{c} {\rm Mod}_{\mathbf W} \\{\rm Mod}_{\mathbf X} \end{array}\right),\quad
	\vec {\rm Mod}_{2}= \left(\begin{array}{c} {\rm Mod}_{\mathbf V} \\{\rm Mod}_{\mathbf Z} \end{array}\right),
	\]
	\[ 
	\vec {{\mathbf R}} = \left(\begin{array}{c} 0 \\{\mathbf R}_{\mathbf X}\end{array}\right),\quad\vec {{\mathbf R}}_{\rm NL} = \left(\begin{array}{c}0 \\{\mathbf R}_{\rm NL}\end{array}\right),\quad
	\vec {{\mathbf R}}_{1} = \left(\begin{array}{c}0 \\{\mathbf R}_{1}\end{array}\right),\quad \vec {{\mathbf R}}_{2} = \left(\begin{array}{c}0 \\{\mathbf R}_{2}\end{array}\right).
	\] 
	With this notation, the system~\eqref{syst_e} rewrites 
	\begin{equation}\label{syst_e3}
	\partial_t \vec \varepsilon = \vec {\mathcal L} \vec \varepsilon + \vec {\rm Mod}_{1}+\vec {\rm Mod}_{2} + \vec {\mathbf{R}}+\vec {\mathbf{R}}_{\rm NL} = \vec {\mathcal L} \vec \varepsilon + \vec {\rm Mod}_{1}+\vec {\rm Mod}_{2} + \vec {\mathbf{R}}+\vec {\mathbf{R}}_1+\vec {\mathbf{R}}_2.
	\end{equation}
	We claim the following estimates on ${\mathbf R}_1$ and ${\mathbf R}_2$
	\begin{equation}\label{R1bis}
	\| {\mathbf{R}}_1\|_{L^{\frac{10}7}}\lesssim 
	t^{-1} \|\varepsilon\|_{L^{\frac{10}3}} ,\quad
	| {\mathbf{R}}_2|\lesssim |{\mathbf W}|^{\frac13}\varepsilon^2 + |\varepsilon|^{\frac 73},\quad
	\| {\mathbf{R}}_2\|_{L^{\frac {10}7}} \lesssim \|\varepsilon\|_{L^{\frac{10}3}}^2 \lesssim \|\varepsilon\|_{\dot H^1}^{2}.
	\end{equation}
	The estimate on $\mathbf R_2$ follows from~\eqref{lor}. To prove the estimate on $\mathbf R_1$, we first recall
	the inequality, for $p>1$, for any reals $(r_k)$,
	\begin{equation*}
	\left|\left|\sum r_k \right|^{p} - \sum |r_k|^p\right| \lesssim \sum_{k'\neq k} |r_{k'}||r_k|^{p-1}.
	\end{equation*}
	Therefore,
	\begin{align*}
	\left| |{\mathbf W}|^{\frac 43} - \sum_{k=1}^{K} |W_k|^{\frac 43}\right|&\lesssim \left| |{\mathbf W}|^{\frac 43} - \left|\sum_{k=1}^{K} W_k\right|^{\frac 43}\right|+\left| \left|\sum_{k=1}^{K} W_k\right|^{\frac 43} - \sum_{k=1}^{K}|W_k|^{\frac 43}\right|\\
	&\lesssim \left(\sum_{k=1}^{K} |v_k|\right) \left( \sum_{k=1}^{K} \left(|W_k| +|v_k|\right)\right)^{\frac 13} + \sum_{k'\neq k} |W_k| |W_{k'}|^{\frac 13},
	\end{align*}
	and thus
	\[
	| {\mathbf{R}}_1|\lesssim 
	|\varepsilon| \left(\sum_{k=1}^{K} |v_k|\right) \left( \sum_{k=1}^{K} \left(|W_k| +|v_k|\right)\right)^{\frac 13} + |\varepsilon|\sum_{k'\neq k} |W_k| |W_{k'}|^{\frac 13}.
	\]
	By~\eqref{lor},
	we obtain
	\[
	\| {\mathbf{R}}_1\|_{L^{\frac{10}7}}\lesssim 
	\|\varepsilon\|_{L^{\frac{10}3}}
	\left( \sum_{k=1}^{K} \|v_k\|_{L^{\frac{10}3}}\right) \left(\sum_{k=1}^{K}\left(\|W_k\|_{L^{\frac{10}3}}^{\frac 13} + \|v_k\|_{L^{\frac{10}3}}^{\frac 13}\right)\right) + \|\varepsilon\|_{L^{\frac{10}3}}\sum_{k'\neq k} 
	\left \|W_k|W_{k'}|^{\frac 13}\right\|_{L^{\frac 52}}.
	\]
	By~\eqref{e:n40}, we have $\|v_k\|_{L^{\frac{10}3}}\lesssim t^{-2}$.
	Moreover $\|W_k|W_{k'}|^{\frac 13} \|_{L^{\frac 52}}\lesssim t^{-1}$ is a consequence of the following technical result.
	\begin{claim}[Claim~2 in~\cite{MMwave1}]\label{WW}
		Let $0<r_2\leq r_1$ be such that $r_1+r_2> \frac 53$. For $t$ large, 
		if $r_1> \frac 53$ then $\int |W_1|^{r_1} |W_2|^{r_2}\lesssim t^{-3 r_2}$, whereas if $r_1\leq \frac 53$ then $\int |W_1|^{r_1} |W_2|^{r_2} \lesssim t^{5-3 (r_1+r_2)}$.
	\end{claim}
	
	\textbf{Step 3.} Parameter estimates.
	Now, we derive the equations of $\lambda_k$ and $\mathbf{y}_k$ from the orthogonality conditions~\eqref{ortho}. First,
	\begin{align*}
	\frac d{dt} \pshbbun {\varepsilon}{ \Lambda_1 W_1 } = 
	\pshbbun{\partial_t \varepsilon}{ \Lambda_1 W_1 }+
	\pshbbun {\varepsilon}{\partial_t \left(\Lambda_1 W_1\right) }=0.
	\end{align*}
	Thus, using the first line of~\eqref{syst_e}, and the expression of ${\rm Mod}_{\mathbf W}$ in Lemma~\ref{le:43},
	\begin{align*}
	0= & \pshbbun{\eta}{ \Lambda_1 W_1 }
	-\pshbbun{\varepsilon}{ {\boldsymbol{\ell}}_1 \cdot \nabla (\Lambda_1 W_1)} -\frac{\dot\lambda_1}{\lambda_1}\pshbbun {\varepsilon}{ \Lambda^2_1W_1 } -\pshbbun {\varepsilon}{{\dot {\mathbf{y}}_1}\cdot \nabla\Lambda_1 W_1} 
	\\
	& +\left( \frac{\dot\lambda_1}{\lambda_1} - \frac{a_1}{\lambda_1^{\frac 12}t^2}\right) \pshbbun{\Lambda_1 W_1 }{ \Lambda_1 W_1 }
	+ \pshbbun{ \dot {\mathbf{y}}_1 \cdot \nabla W_1 }{ \Lambda_1 W_1 } \\
	& + \sum_{k=2}^{K}\left( \frac{\dot\lambda_k}{\lambda_k} - \frac{a_k}{\lambda_k^{\frac 12}t^2}\right) \pshbbun{\Lambda_k W_k}{\Lambda_1 W_1}
	+ \sum_{k=2}^{K}\pshbbun{ \dot {\mathbf{y}}_k \cdot \nabla W_k }{ \Lambda_1 W_1 }+ \pshbbun{{\rm Mod}_{\mathbf{V}}}{ \Lambda_1 W_1 } .
	\end{align*}
	By the decay properties of $W$, we obtain 
	\[
	\left| \pshbbun{\eta}{ \Lambda_1 W_1 }\right|
	+ \left|\pshbbun{\varepsilon}{ \nabla (\Lambda_1 W_1)} \right|
	+\left| \pshbbun {\varepsilon}{ \Lambda_1^2 W_1 }\right| 
	+\left|\pshbbun {\varepsilon}{ \nabla\Lambda_1 W_1}\right|\lesssim \|\vec \varepsilon\|_{\dot H^1\times L^2}.
	\]
	Next, $
	\pshbbun{\Lambda_1 W_1 }{ \Lambda_1 W_1 } 
	= (1-|{\boldsymbol{\ell}}_1|^2)^{\frac 12} \|\Lambda W\|_{\dot H^1}^2 $
	and by parity,
	$
	\pshbbun{ \nabla W_1 }{ \Lambda_1 W_1 }=0$.
	Using Claim~\ref{WW}, we have, for $k\in\{2,\dots,K\}$,
	\[
	\left| \pshbbun{ \Lambda_k W_k}{ \Lambda_1 W_1 }\right| 
	+ \left| \pshbbun{ \nabla W_k}{ \Lambda_1 W_1 } \right|
	\lesssim t^{-3}.
	\]
	And then, using the expression of ${\rm Mod}_{\mathbf{V}}$ and~\eqref{e:n40}, we obtain
	\begin{equation*}
	\left|({\rm Mod}_{\mathbf{V}},\Lambda_{1}W_{1})_{\dot{H}_{\bell}^{1}}\right|\lesssim \frac{1}{t^{2}}\sum_{k=1}^{K}\left(\left|\frac{\dot{\lambda_{k}}}{\lambda_{k}}\right|+|\dot{\mathbf{y}_{k}}|\right)\lesssim \frac{1}{t^{2}}\sum_{k=1}^{K}\left(\left|\frac{\dot{\lambda_{k}}}{\lambda_{k}}-\frac{a_{k}}{\lambda_{k}^{\frac{1}{2}} t^{2}}\right|+|\dot{\mathbf{y}_{k}}|\right)+t^{-4}.
	\end{equation*}
	In conclusion of the previous estimates, the orthogonality condition $(\varepsilon,\Lambda_{1}W_{1})_{\dot{H}^{1}_{\bell_{1}}}=0$, gives the following
	\[
	\left|\frac{\dot \lambda_1}{\lambda_1} - \frac{a_1}{\lambda_2^{\frac 12}t^2}\right| \lesssim \left\|\vec \varepsilon\right\|_{\dot H^1\times L^2}+\left(\left\|\vec \varepsilon\right\|_{\dot H^1\times L^2}+t^{-2}\right)\sum_{k=1}^{K}\left(\left|\frac{\dot{\lambda_{k}}}{\lambda_{k}}-\frac{a_{k}}{\lambda_{k}^{\frac{1}{2}}t^{2}}\right|+|\dot{\mathbf{y}_{k}}|\right)+t^{-4}.
	\]
	Using the other orthogonality conditions, 
	we obtain similarly,
	\begin{align*}
	&\sum_{k=1}^{K} \left(\left|\frac{\dot\lambda_k}{\lambda_k} - \frac{a_k}{\lambda_k^{\frac 12}t^2}\right|+\left|\dot {\mathbf{y}}_k \right| \right)\\ 
	&\lesssim \left\|\vec \varepsilon\right\|_{\dot H^1\times L^2}+\left(\left\|\vec \varepsilon\right\|_{\dot H^1\times L^2}+t^{-2}\right)\sum_{k=1}^{K}\left(\left|\frac{\dot{\lambda_{k}}}{\lambda_{k}}-\frac{a_{k}}{\lambda_{k}^{\frac{1}{2}}t^{2}}\right|+|\dot{\mathbf{y}_{k}}|\right)+t^{-4}.
	\end{align*} 
	Therefore, for $\delta_0$ small enough and $T_0$ large enough, we find~\eqref{le:p}. 
	\smallbreak
	
	\textbf{Step 4.} Equations of the unstable directions. We prove the case of $z_{1}^{+}$. The other case is similar.
	Recall that $\vec Z_{{\boldsymbol{\ell}}_k}^{\pm} \in \mathcal S$ by their definition in \S\ref{sec.23}.
	By~\eqref{syst_e3}, we have
	\begin{align*}
	\frac d{dt} z_1^{+} & =\frac d{dt} \psl {\vec \varepsilon}{\vec {\tilde \theta}_1\vec Z_{{\boldsymbol{\ell}}_1}^+} = 
	\psl {\partial_t \vec \varepsilon}{\vec {\tilde \theta}_1 \vec Z_{{\boldsymbol{\ell}}_1}^{+}}+
	\psl {\vec \varepsilon}{\partial_t\left(\vec {\tilde \theta}_1 \vec Z_{{\boldsymbol{\ell}}_1}^{+}\right)} \\
	& = \psl {\vec {\mathcal L} \vec \varepsilon}{\vec {\tilde \theta}_1 \vec Z_{{\boldsymbol{\ell}}_1}^{+}}
	- \frac {{\boldsymbol{\ell}}_1}{\lambda_1} \cdot \psl{\vec \varepsilon}{\vec{\tilde \theta}_1 \nabla\vec Z_{\ell_1}^{+}} 
	-\frac{\dot\lambda_1}{\lambda_1} \psl{\vec \varepsilon}{\vec{\tilde \theta}_1 \vec\Lambda \vec Z_{{\boldsymbol{\ell}}_1}^{+}} - \frac {\dot {\mathbf{y}}_1}{\lambda_1} \cdot \psl{\vec \varepsilon}{\vec{\tilde \theta}_1 \nabla\vec Z_{{\boldsymbol{\ell}}_1}^{+}} 
	\\ &\quad + \psl{\vec {\rm Mod_{1}}}{\vec {\tilde \theta}_1 \vec Z_{{\boldsymbol{\ell}}_1}^{+}} +\psl{\vec {\rm Mod_{2}}}{\vec {\tilde \theta}_1 \vec Z_{{\boldsymbol{\ell}}_1}^{+}}+ \psl{\vec{\mathbf{R}}}{\vec {\tilde \theta}_1 \vec Z_{{\boldsymbol{\ell}}_1}^{+}} +\psl{\vec{\mathbf{R}}_{\rm NL}}{\vec {\tilde \theta}_1 \vec Z_{{\boldsymbol{\ell}}_1}^{+}}.
	\end{align*}
	First, by direct computations, using~(ii) of Lemma~\ref{surZZ},
	\begin{align*}
	\psl {\vec {\mathcal L} \vec \varepsilon}{\vec {\tilde \theta}_1 \vec Z_{{\boldsymbol{\ell}}_1}^{+}}
	- \frac {{\boldsymbol{\ell}}_1}{\lambda_1} \cdot \psl{\vec \varepsilon}{\vec{\tilde \theta}_1 \nabla\vec Z_{{\boldsymbol{\ell}}_1}^{+}} &= \frac 1{\lambda_1} \psl{\vec \varepsilon}{\vec{\tilde \theta}_1 \left( -H_{{\boldsymbol{\ell}}_1} J \vec Z_{{\boldsymbol{\ell}}_1}^+\right)} + \sum_{k\ge 2}\psl \varepsilon {f'(W_k) (\theta_1 Z_{{\boldsymbol{\ell}}_1}^+)}\\
	& =  \frac {\sqrt{\lambda_0}}{\lambda_1} (1-|{\boldsymbol{\ell}}_1|^2)^{\frac 12} z_1^+ 
	+ \sum_{k\ge 2}\psl \varepsilon {f'(W_k) (\theta_1 Z_{{\boldsymbol{\ell}}_1}^+)}.
	\end{align*}
	By the decay properties of $\vec Z_{\bell_1}^+$ and Claim~\ref{WW},
	\[
	\sum_{k\ge 2}\left| \psl \varepsilon {f'(W_k) (\theta_1 Z_{{\boldsymbol{\ell}}_1}^\pm)}\right| \lesssim t^{-4} {\| \varepsilon\|_{\dot H^1}} .
	\]
	Next, by~\eqref{le:p},
	\[\left| \frac{\dot\lambda_1}{\lambda_1}\right|\left|\psl{\vec \varepsilon}{\vec{\tilde \theta}_1 \vec \Lambda \vec Z_{\bell_1}^{+}} \right| +
	\left|\frac {\dot {\mathbf{y}}_1}{\lambda_1} \cdot \psl{\vec \varepsilon}{\vec{\tilde \theta}_1 \nabla\vec Z_{{\boldsymbol{\ell}}_1}^{+}} \right|
	\lesssim \left\|\vec\varepsilon\right\|_{\dot H^1\times L^2}^2 +t^{-2}\left\|\vec \varepsilon\right\|_{\dot H^1\times L^2}.
	\]
	Concerning the term with $\vec{\rm Mod}_{1}$.
	From (ii) of Lemma~\ref{surZZ}, we obtain
	\begin{equation*}
(\vec \theta_1 \vec Z_{{\boldsymbol{\ell}}_1}^{\Lambda},\vec {\tilde \theta}_1 \vec Z_{{\boldsymbol{\ell}}_1}^{+})= (\vec Z_{{\boldsymbol{\ell}}_1}^{\Lambda},\vec Z_{{\boldsymbol{\ell}}_1}^{+}) =0,\quad (\vec \theta_1 \vec Z_{{\boldsymbol{\ell}}_1}^{\nabla},\vec {\tilde \theta}_1 \vec Z_{{\boldsymbol{\ell}}_1}^{+})= (\vec Z_{{\boldsymbol{\ell}}_1}^{\nabla},\vec Z_{{\boldsymbol{\ell}}_1}^{+}) =0.
\end{equation*}
	Moreover, by Claim~\ref{WW}, we have
	\[
	\sum_{k=2}^{K}\left(\left|\psl {\vec\theta_k \vec Z_{\bell_k}^{\Lambda}}
	{\vec {\tilde \theta}_1 \vec Z_{\bell_1}^{+}}\right|
	+\left|\psl{\vec \theta_k \vec Z_{\bell_k}^{\nabla}}{\vec {\tilde \theta}_1 \vec Z_{\bell_1}^{+}}\right|\right) \lesssim t^{-3},
	\]
	and thus, by~\eqref{le:p},
	\[
	\sum_{k=2}^{K}\left(\left|\frac{\dot\lambda_k}{\lambda_k} - \frac{a_k}{\lambda_k^{\frac 12}t^2}\right| \left|\psl {\vec\theta_k \vec Z_{\bell_k}^{\Lambda}}
	{\vec {\tilde \theta}_1 \vec Z_{\bell_1}^{+}}\right|+
	\left| \frac {\dot {\mathbf{y}}_k}{\lambda_k} \cdot\psl{\vec \theta_k \vec Z_{\bell_k}^{\nabla}}{\vec {\tilde \theta}_1 \vec Z_{\bell_1}^{+}}\right|\right) \lesssim 
	t^{-3} \left\|\vec\varepsilon\right\|_{\dot H^1\times L^2}+t^{-7}.
	\]
	Concerning the term with $\vec{\rm Mod}_{2}$. From~\eqref{e:n40}, ~\eqref{le:p} and the definition of $\vec{\rm Mod}_{2}$, we obtain
	\begin{equation*}
	\left|\psl{\vec {\rm Mod_{2}}}{\vec {\tilde \theta}_1 \vec Z_{{\boldsymbol{\ell}}_1}^{+}}\right|\lesssim \frac{1}{t^{2}}\sum_{k=1}^{K}\left(\left|\frac{\dot{\lambda_{k}}}{\lambda_{k}}\right|+|\dot{\mathbf{y}_{k}}|\right)\lesssim t^{-2}\left\|\vec\varepsilon\right\|_{\dot H^1\times L^2}+t^{-4}
	\end{equation*}
	Finally, we claim
	\[
	\left| \psl{\vec {\mathbf{R}}}{\vec {\tilde \theta}_1 \vec Z_{\bell_1}^{+}}\right|+
	\left| \psl{\vec {{\mathbf R}}_{\rm NL}}{\vec {\tilde \theta}_1 \vec Z_{\bell_1}^{+}}\right|
	\lesssim
	t^{-4+\delta} + t^{-1} {\| \varepsilon\|_{\dot H^1}} + \| \varepsilon\|_{\dot H^1}^2.
	\]
	Indeed, from~\eqref{RWX}, we have
	$
	|(\vec {\mathbf{R}},\vec {\tilde \theta}_1 \vec Z_{\bell_1}^{+})|\lesssim
	t^{-4+\delta}$.
	Second, by~\eqref{R1bis} and the decay of~$Y$, we have
	\begin{equation*}
\left|\left(\vec{{\mathbf R}}_{1},\vec{\tilde \theta}_1 \vec Z_{\bell_1}^{+}\right)\right|\lesssim \|\mathbf{R}_{1}\|_{\frac{10}{7}}\lesssim t^{-1}\|\varepsilon\|_{\dot{H}^{1}} \quad \mathrm{and} \quad \left|\left(\vec{{\mathbf R}}_{2},\vec{\tilde \theta}_1 \vec Z_{\bell_1}^{+}\right)\right|\lesssim \|\mathbf{R}_{2}\|_{\frac{10}{7}}\lesssim \|\varepsilon\|_{\dot{H}^{1}}^{2} 
\end{equation*}

Gathering these estimates, we obtain~\eqref{le:z}. The proof of Lemma~\ref{le:4} is complete.
\end{proof}
\section{Proof of Theorem~\ref{th:1}}
To construct the $K$-soliton solution at $+\infty$, we follow the strategy of~\cite{MMwave1} using the refined approximate solution $\vec{\mathbf W}$ defined in the previous section. We argue by compactness and obtain the solution $u(t)$ as the limit of a sequence of approximate multi-solitons $u_n(t)$.
\begin{proposition}\label{pr:S1}
	There exist $T_0>0$
	and a solution $u(t)$ of~\eqref{wave} on $[T_0,+\infty)$ satisfying, for all $0<\delta<1$, for all $t\in [T_0,+\infty)$,
	\begin{equation}\label{pr:S11}
	\left\|\nabla u(t) - \nabla {\mathbf W}(t)\right\|_{L^2}
	+\left\| \partial_t u(t) - {\mathbf X}(t) \right\|_{L^2} \lesssim t^{-3+\delta}
	\end{equation}
	where $\lambda_k(t)$, $\mathbf{y}_k(t)$ are such that, for all $t\in [T_0,+\infty)$,
	\begin{equation}\label{blay}
	|\lambda_k(t)-\lambda_k^\infty|+|\mathbf{y}_k(t)-\mathbf{y}_k^\infty|\lesssim t^{-1}.
	\end{equation}
\end{proposition}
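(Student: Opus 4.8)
The plan is to obtain $u$ as a weak limit, as $n\to\infty$, of backward solutions $u_n$ of \eqref{wave} launched at times $S_n\to+\infty$ from data very close to the refined approximate $K$-soliton, the uniform bound \eqref{pr:S11} being produced by a bootstrap argument whose engine is the localized energy functional $\mathcal H$ (its coercivity up to finitely many directions, and the monotonicity \eqref{outv}), together with a topological argument for the directions which are unstable under the backward flow. Since we integrate backwards, these are the $z_k^-=(\vec\varepsilon,\vec{\tilde\theta}_k\vec Z_{\bell_k}^-)$, while the $z_k^+$ will turn out to be automatically controlled.

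Fix $\delta>0$ small (it may be shrunk finitely many times below) and let $S_n\to+\infty$. For each $n$ and each $\mathbf a=(a_{k,n}^-)_{1\le k\le K}$ in the ball $\overline B_n=\{\mathbf a\in\mathbb R^K:\ |\mathbf a|\le S_n^{-4+\delta}\}$, let $u_{n,\mathbf a}$ be the solution of \eqref{wave} with data at $t=S_n$ prescribed by \eqref{defun}, that is, $\sum_{k}(\vec W_k^\infty+c_k\vec V_k^\infty)(S_n)$ plus a perturbation of size $|\mathbf a|$ along the $K$ backward-unstable directions built from the $\vec Z_{\bell_k}^-$. For $T_0$ large this data satisfies \eqref{hyp:4} for every $\mathbf a\in\overline B_n$, so Lemma~\ref{le:4} provides, on a maximal subinterval of $[T_0,S_n]$, a decomposition $\vec u_{n,\mathbf a}=\vec{\mathbf W}+\vec\varepsilon$ with $\mathcal C^1$ parameters $\lambda_k,\mathbf y_k$, the orthogonality \eqref{ortho}, and the estimates \eqref{le:p}, \eqref{le:z}. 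The crucial remark at the endpoint is that when $\mathbf a=0$ the data is exactly $\vec{\mathbf W}$ evaluated at the parameters $(\lambda_k^\infty,\mathbf y_k^\infty)$, so by uniqueness of the modulation $\lambda_k(S_n)=\lambda_k^\infty$, $\mathbf y_k(S_n)=\mathbf y_k^\infty$ and $\vec\varepsilon(S_n)=0$; hence in general $\|\vec\varepsilon(S_n)\|_{\dot H^1\times L^2}\lesssim|\mathbf a|$, so that $\mathcal H(S_n)\lesssim|\mathbf a|^2\le S_n^{-8+2\delta}$, while (using \eqref{surZZ}, which makes $\vec Z_{\bell_k}^-$ orthogonal to the modulation modes, and the fact that distinct solitons are far apart at $S_n$) the map $\mathbf a\mapsto(z_k^-(S_n))_{k}$ equals an invertible linear map plus an $O(|\mathbf a|^2)$ correction, and $z_k^+(S_n)=O(|\mathbf a|)$.

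Next I run a bootstrap on $[T_0,S_n]$: let $T_n(\mathbf a)$ be the infimum of the times $\tau\in[T_0,S_n]$ such that $u_{n,\mathbf a}$ exists and, for all $t\in[\tau,S_n]$ and all $k$,
\[
\|\vec\varepsilon(t)\|_{\dot H^1\times L^2}\le t^{-3+\delta},\qquad
|\lambda_k(t)-\lambda_k^\infty|+|\mathbf y_k(t)-\mathbf y_k^\infty|\le C_\star t^{-1},\qquad
|z_k^-(t)|\le A\,t^{-4+\delta},
\]
with $A$ and $C_\star$ large constants fixed in that order. On $[T_n(\mathbf a),S_n]$ the assumption \eqref{BS0} holds, so Lemmas~\ref{le:43}, \ref{le:4} and the coercivity/monotonicity properties of $\mathcal H$ (Lemma~\ref{mainprop}) all apply, and each bound self-improves. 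First, integrating \eqref{outv} on $[t,S_n]$ and using $\mathcal H(S_n)\lesssim S_n^{-8+2\delta}$ gives $\mathcal H(t)\lesssim t^{-6+C\delta}$ with an absolute constant; then coercivity of $\mathcal H$, the orthogonality \eqref{ortho} and the $z_k^\pm$ bounds yield $\|\vec\varepsilon(t)\|_{\dot H^1\times L^2}^2\lesssim\mathcal H(t)+\sum_k|z_k^\pm(t)|^2\lesssim t^{-6+C\delta}$, hence $\|\vec\varepsilon(t)\|_{\dot H^1\times L^2}\le\tfrac12 t^{-3+\delta}$ for $T_0$ large and $\delta$ small. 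Second, feeding $\|\vec\varepsilon\|\lesssim t^{-3+\delta}$ into \eqref{le:p} and integrating backwards from the endpoint values $(\lambda_k^\infty,\mathbf y_k^\infty)$, using that $a_k$ is a constant, gives $|\lambda_k(t)-\lambda_k^\infty|\le |a_k|(\lambda_k^\infty)^{1/2}t^{-1}+Ct^{-2+\delta}$ and $|\mathbf y_k(t)-\mathbf y_k^\infty|\le Ct^{-2+\delta}$, both $\le\tfrac12 C_\star t^{-1}$ for $C_\star$ and $T_0$ large. Third, in \eqref{le:z} the forcing obeys $|g_k^\pm|\lesssim\|\vec\varepsilon\|^2+t^{-1}\|\vec\varepsilon\|+t^{-4+\delta}\lesssim t^{-4+\delta}$; solving the linear ODE backwards, $z_k^+$ is exponentially damped, so $|z_k^+(t)|\lesssim|z_k^+(S_n)|+t^{-4+\delta}\lesssim t^{-4+\delta}$ with no adjustment, whereas at any time $t^\star$ with $|z_k^-(t^\star)|=A(t^\star)^{-4+\delta}$ one checks, for $A$ large, that $\tfrac{d}{dt}|z_k^-|^2\le -\tfrac{\beta_k}{\lambda_k^\infty}A^2 t^{-8+2\delta}<0$ there (with $\beta_k=\sqrt{\lambda_0}(1-|\bell_k|^2)^{1/2}$), so this threshold is crossed transversally. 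Consequently, for a fixed $\mathbf a\in\overline B_n$, $T_n(\mathbf a)>T_0$ can occur only if some $|z_k^-|$ is exactly at its threshold at $t=T_n(\mathbf a)$ (the other bounds have just been strictly improved, and backward blow-up for $t>T_0$ is excluded by the uniform bounds). If this held for every $\mathbf a\in\overline B_n$, then, using the transversality above and that $\mathbf a\mapsto(z_k^-(S_n))_k$ sends $\partial\overline B_n$ onto a sphere already violating the bootstrap, a classical continuity/Brouwer argument would give a continuous retraction of $\overline B_n$ onto $\partial\overline B_n$, which is impossible. Hence there is $\mathbf a_n\in\overline B_n$ with $T_n(\mathbf a_n)=T_0$; writing $u_n=u_{n,\mathbf a_n}$, the three bounds above hold on all of $[T_0,S_n]$.

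It remains to pass to the limit. The uniform bounds give $\|\vec u_n(T_0)\|_{\dot H^1\times L^2}\le C$ and, for the modulation parameters of $u_n$ at $T_0$, $|\lambda_k(T_0)-\lambda_k^\infty|+|\mathbf y_k(T_0)-\mathbf y_k^\infty|\le C_\star T_0^{-1}$, uniformly in $n$. Extracting a subsequence, $\vec u_n(T_0)\rightharpoonup\vec u_\infty(T_0)$ weakly in $\dot H^1\times L^2$ and the parameters converge; let $u$ be the solution of \eqref{wave} with data $\vec u_\infty(T_0)$. By weak continuity of the flow of \eqref{wave} on compact time intervals (the $u_n$ being uniformly bounded there in the energy and Strichartz norms, as small perturbations of $\vec{\mathbf W}$) and weak lower semicontinuity of the norm, $u$ is defined on $[T_0,+\infty)$ and, for each $t$, $\|\vec u(t)-\sum_k\vec W_k^\infty(t)-\sum_k c_k\vec V_k^\infty(t)\|_{\dot H^1\times L^2}\le\liminf_n(\cdots)\lesssim t^{-1}$; applying Lemma~\ref{le:4} to $u$ itself and using the weak convergence of the $\vec\varepsilon_n$ then gives $\|\vec\varepsilon(t)\|_{\dot H^1\times L^2}\le t^{-3+\delta}$ together with $|\lambda_k(t)-\lambda_k^\infty|+|\mathbf y_k(t)-\mathbf y_k^\infty|\lesssim t^{-1}$, i.e. \eqref{pr:S11} and \eqref{blay}. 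The hard part of this program is not the assembly above but the two properties of $\mathcal H$ on which it rests (Lemma~\ref{mainprop}): coercivity in the presence of the transition-region weight $\vec\chi\approx x/t$, and — mainly — the monotonicity \eqref{outv}, where the virial-type error terms generated by $\nabla\cdot\vec\chi$ and $\partial_t\vec\chi$ in the regions between the solitons must be absorbed against the slowly decaying soliton interactions; it is precisely this balance that forces the use of the $t^{-4+\delta}$-accurate approximate solution $\vec{\mathbf W}$ from \cite{MMwave2} and that produces the smallness condition \eqref{sc}.
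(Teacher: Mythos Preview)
Your overall strategy matches the paper's: backward solutions $u_n$ from $S_n\to\infty$, bootstrap via the coercivity and almost-monotonicity of $\mathcal H$ (Lemma~\ref{mainprop}), modulation estimates from Lemma~\ref{le:4}, and a Brouwer-type argument for the backward-unstable directions $z_k^-$. The one substantive difference is in the compactness step. The paper does \emph{not} rely on weak continuity of the critical flow: Proposition~\ref{pr:s5} also furnishes a uniform $\dot H^2\times\dot H^1$ bound on $\vec u_n$ together with the tightness estimate~\eqref{cpq}, and these give \emph{strong} convergence of $\vec u_n(T_0)$ in $\dot H^1\times L^2$ (Rellich on balls plus no mass at infinity), after which one simply invokes continuous dependence of the flow on the data. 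Your route via weak convergence plus ``weak continuity of the flow'' can in principle be made to work here (uniform smallness of $\vec\varepsilon$ does yield uniform Strichartz control, so the nonlinearity passes to the limit in Duhamel), but for an energy-critical equation this is not automatic and deserves more than a one-line justification; the paper's extra-regularity-plus-tightness approach buys a cleaner and more robust limit at the modest cost of proving the $\dot H^2\times\dot H^1$ bound.

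Two minor points. The paper perturbs along all $2K$ directions $\vec Z_{\bell_k}^\pm$ and uses Claim~\ref{le:modu2} to arrange $z_k^+(S_n)=0$ exactly, parametrizing directly by $\xi_k=z_k^-(S_n)$; your variant (only $K$ parameters along $\vec Z_{\bell_k}^-$, with $z_k^+(S_n)=O(|\mathbf a|)$ absorbed by backward damping) is equally valid. However, your bookkeeping in the topological argument is slightly off: with the ball $|\mathbf a|\le S_n^{-4+\delta}$ and threshold $|z_k^-|\le A\,t^{-4+\delta}$ for $A$ large, the boundary $\partial\overline B_n$ maps at $t=S_n$ to a sphere of radius $\sim S_n^{-4+\delta}$, not $A\,S_n^{-4+\delta}$, so immediate exit on $\partial\overline B_n$ is not guaranteed and the retraction argument as stated does not close. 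The paper avoids this by matching the ball radius to the threshold (radius $S_n^{-7/2}$, threshold $|z_k^\pm|^2\le t^{-7}$) and parametrizing directly by $z_k^-(S_n)$; you should do the analogous rescaling.
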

This section is devoted to the proof of Proposition~\ref{pr:S1}. Note that Proposition~\ref{pr:S1} implies Theorem~\ref{th:1}.

\smallbreak

Let $S_n\to +\infty$.
Let $\zeta_{k,n}^{\pm}\in {\mathbb{R}}$ small to be determined later. These free parameters correspond to two exponentially stable/unstable directions for each soliton - see statements of Proposition~\ref{pr:s5}, Claim~\ref{le:modu2} and Lemma~\ref{le:bs2}.
For any large $n$, we consider the solution $u_n$ of
\begin{equation}\label{defun}
\left\{
\begin{aligned} 
& \partial_t^2 u_n - \Delta u_n - |u_n|^{\frac 4{3}} u_n = 0 \\ 
& (u_n(S_n),\partial_t u_n(S_n))^\mathsf{T} = \sum_{k=1}^{K} \left( \vec W_{k}^\infty (S_n) + c_k \vec V_k^{\infty}(S_n)
+\sum_\pm \zeta_{k,n}^\pm ( \vec \theta_k^\infty \vec Z_{{\boldsymbol{\ell}}_k}^\pm)(S_n)\right).
\end{aligned}
\right.
\end{equation}
Note that since $(u_n(S_n),\partial_t u_n(S_n))\in \dot H^1\times L^2$, the solution $\vec u_n$ is well-defined in $\dot H^1\times L^2$ at least on a small interval of time around $S_n$.

Now, we state uniform estimates on $u_n$ backwards in time up to some uniform $T_0\gg1$.
\begin{proposition}\label{pr:s5} There exist $n_0>0$ and $T_0>0$ such that, for any $n\geq n_0$, there exist
	$(\zeta_{k,n}^{\pm})_{k\in\{1,\dots,K\}}\in {\mathbb{R}}^K\times {\mathbb{R}}^K$, with
	\[
	\sum_{k=1}^{K} |\zeta_{k,n}^{+}|^2+\sum_{k=1}^{K} |\zeta_{k,n}^{-}|^2 \lesssim {S_n^{-7}},
	\]
	and such that the solution $\vec u_n=(u_n,\partial_t u_n)^\mathsf{T}$ of~\eqref{defun} is well-defined in $\dot H^1\times L^2$ on the time interval $[T_0,S_n]$
	and satisfies, for all $t\in [T_0,S_{n}]$,
	\begin{equation}\label{eq:un} 
	\left\|\vec u_n(t) -\vec {\mathbf W}_{n}(t)\right\|_{\dot H^1\times L^2} \lesssim t^{-3+\delta}
	\end{equation}
and for all $\nu>0$, there exists  $M>0$ such that, for all $n\geq n_0$,
\begin{equation}\label{cpq}
\|\vec u_n(T_0)\|_{(\dot H^1\times L^2)(|x|>M)} <\nu,
\end{equation}
	where $
	\vec {\mathbf W}_{n}(t,x)=
	\vec {\mathbf W}\left(t,x;\{\lambda_{k,n}(t)\},\{\mathbf{y}_{k,n}(t)\}\right)$ is defined in \S 4,
	and
	\begin{equation}\label{eq:deux}
	|\lambda_{k,n}(t)-\lambda_k^\infty|+|\mathbf{y}_{k,n}(t)-\mathbf{y}_{k}^\infty|\lesssim t^{-1},\quad
	\left| \dot \lambda_{k,n} (t)\right|+\left|\dot {\mathbf{y}}_{k,n} (t)\right|\lesssim t^{-2}.
	\end{equation}
	Moreover, $\vec u_n\in \mathcal C([T_0,S_n],\dot H^2\times \dot H^1)$ and satisfies, for all $t\in [T_0,S_n]$,
	$\|\vec u_n(t)\|_{\dot H^2\times \dot H^1} \lesssim 1$.
\end{proposition}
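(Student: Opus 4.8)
The plan is to run a continuity (bootstrap) argument on $[T_0,S_n]$ for the remainder in the modulated decomposition of Lemma~\ref{le:4}, to use a topological shooting argument on the parameters $\zeta_{k,n}^\pm$ to control the two exponential directions attached to each soliton, and to conclude with an exterior-localized energy estimate for~\eqref{cpq} and a persistence-of-regularity argument for the $\dot H^2\times\dot H^1$ bound. For $n$ large and any $(\zeta_{k,n}^\pm)$ with $\sum_k(|\zeta_{k,n}^+|^2+|\zeta_{k,n}^-|^2)\le S_n^{-7}$, the data in~\eqref{defun} is $o(\delta_0)$-close in $\dot H^1\times L^2$ to $\sum_k(\vec W_k^\infty+c_k\vec V_k^\infty)$, so Lemma~\ref{le:4} applies on a maximal interval $(T_n^\ast,S_n]$ and provides $\mathcal C^1$ parameters $\lambda_{k,n},\mathbf y_{k,n}$, the orthogonality conditions~\eqref{ortho}, the system~\eqref{syst_e}, and the estimates~\eqref{le:p},~\eqref{le:z}. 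I would let $T_n^\ast$ be the infimum of the times $\tau$ such that the bootstrap bounds
\begin{equation*}
|\lambda_{k,n}(t)-\lambda_k^\infty|+|\mathbf y_{k,n}(t)-\mathbf y_k^\infty|\le C_0 t^{-1},\qquad
\|\vec\varepsilon(t)\|_{\dot H^1\times L^2}\le t^{-3+\delta},\qquad
\sum_k\big(|z_{k,n}^+(t)|^2+|z_{k,n}^-(t)|^2\big)\le t^{-7}
\end{equation*}
hold on $[\tau,S_n]$, where $z_{k,n}^\pm=(\vec\varepsilon,\vec{\tilde\theta}_k\vec Z_{\bell_k}^\pm)$. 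Because the modulation near $S_n$ gives $\|\vec\varepsilon(S_n)\|_{\dot H^1\times L^2}+\sum_k|z_{k,n}^\pm(S_n)|+|\lambda_{k,n}(S_n)-\lambda_k^\infty|+|\mathbf y_{k,n}(S_n)-\mathbf y_k^\infty|\lesssim S_n^{-7/2}$, these bounds hold with a strict margin at $t=S_n$; the aim is to choose $(\zeta_{k,n}^\pm)$ so that $T_n^\ast=T_0$ with each bound strictly improved on $[T_0,S_n]$.

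The core of the argument is the localized energy functional $\mathcal H$ built from the cutoff $\vec\chi$ ($\vec\chi=\bell_k$ near $W_k$, $\vec\chi$ close to $x/t$ in the transition zones between the trajectories $\bell_k t$). On $[T_n^\ast,S_n]$ I would establish two facts. First, coercivity: patching the weighted coercivity of $H_{\bell_k}$ from Lemma~\ref{surZZ}(iii) over the soliton regions and using, in the transition zones, that $|\vec\chi|$ stays below the threshold fixed by~\eqref{sc} so that the quadratic form $|\nabla\varepsilon|^2+\eta^2+2(\vec\chi\cdot\nabla\varepsilon)\eta$ remains positive definite there (where no potential term is present), one obtains
\begin{equation*}
\mathcal H\ \gtrsim\ \|\vec\varepsilon\|_{\dot H^1\times L^2}^2-\frac1\mu\sum_k\big(|z_{k,n}^+|^2+|z_{k,n}^-|^2\big)-o\big(\|\vec\varepsilon\|_{\dot H^1\times L^2}^2\big).
\end{equation*}
Second, almost-monotonicity: differentiating $\mathcal H$ along~\eqref{syst_e} and inserting~\eqref{le:p}, the interaction estimate~\eqref{NLIN}, the error bound~\eqref{RWX}, the pointwise bounds~\eqref{e:WX},~\eqref{e:n40}, and the bootstrap control of $z_{k,n}^\pm$, the worst surviving terms are $t^{-4+\delta}\|\vec\varepsilon\|_{\dot H^1\times L^2}$ and lower order, so
\begin{equation*}
-\frac{d}{dt}\big(t^{6-3\delta}\mathcal H\big)\ \lesssim\ t^{-1-\delta}.
\end{equation*}
Integrating this on $[t,S_n]$, using $\mathcal H(S_n)\lesssim S_n^{-7}$ and then coercivity together with $\sum_k|z_{k,n}^\pm|^2\le t^{-7}$, yields $\|\vec\varepsilon(t)\|_{\dot H^1\times L^2}^2\lesssim t^{-6+2\delta}$, which strictly improves the second bootstrap bound for $T_0$ large and proves~\eqref{eq:un}.

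Next I would close the other two bounds. From~\eqref{le:p} with $\|\vec\varepsilon\|_{\dot H^1\times L^2}\lesssim t^{-3+\delta}$, the term $a_k\lambda_k^{-1/2}t^{-2}$ dominates, so $|\dot\lambda_{k,n}|+|\dot{\mathbf y}_{k,n}|\lesssim t^{-2}$; integrating from $S_n$ and using $|\lambda_{k,n}(S_n)-\lambda_k^\infty|+|\mathbf y_{k,n}(S_n)-\mathbf y_k^\infty|\lesssim S_n^{-7/2}$ gives $|\lambda_{k,n}(t)-\lambda_k^\infty|+|\mathbf y_{k,n}(t)-\mathbf y_k^\infty|\lesssim t^{-1}$, which improves the first bound and gives~\eqref{eq:deux}. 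For the exponential directions, going backwards in time $z_{k,n}^+$ is the stable one: \eqref{le:z} with $\|\vec\varepsilon\|_{\dot H^1\times L^2}\lesssim t^{-3+\delta}$ gives $\frac d{dt}z_{k,n}^+-\nu_k z_{k,n}^+=O(t^{-4+\delta})$ with $\nu_k=\sqrt{\lambda_0}\,\lambda_k^{-1}(1-|\bell_k|^2)^{1/2}>0$, and the exponentially decaying backward kernel localizes the forcing, so $|z_{k,n}^+(t)|\lesssim S_n^{-7/2}+t^{-4+\delta}$, improving its part of the third bound. The direction $z_{k,n}^-$ is unstable backwards, and here I would run the classical shooting argument (as in~\cite{MMwave1}, following~\cite{CMM}): if for every admissible $(\zeta_{k,n}^-)_k$ some $z_{k,n}^-$ reached the value $\pm t^{-7/2}$ before time $T_0$, then an outgoing-flux computation from~\eqref{le:z} would make the resulting first-exit map a continuous retraction of a closed ball of $\mathbb R^K$ onto its boundary, which is impossible; this selects $(\zeta_{k,n}^-)_k$ (one may take $\zeta_{k,n}^+=0$) with $\sum_k|\zeta_{k,n}^\pm|^2\lesssim S_n^{-7}$ and $|z_{k,n}^-(t)|<t^{-7/2}$ on $[T_0,S_n]$. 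The third bootstrap bound is thus strictly improved, so $T_n^\ast=T_0$ and~\eqref{eq:un}--\eqref{eq:deux} hold.

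Finally, for~\eqref{cpq} I would introduce an exterior analogue $\Nsol(t)$ of $\mathcal H$, localized in a region $\{|x|\ge\rho t\}$ staying away from all the $\bell_k t$; as in~\cite{MMwave1}, a monotonicity formula shows $\Nsol$ cannot grow much backwards in time, so the exterior smallness of the data at $S_n$ (there $\vec\varepsilon(S_n)$ is tiny and the solitons, centered at $\bell_kS_n$ with $|\bell_k|<1$, carry negligible exterior energy for $\rho$ close to $1$) propagates down to $T_0$; combined with the uniform-in-$n$ decay of the tails of $\vec{\mathbf W}_n(T_0)$ (whose soliton centers lie in a fixed compact set by~\eqref{eq:deux}), this gives~\eqref{cpq}. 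The $\dot H^2\times\dot H^1$ bound follows from persistence of regularity: the data~\eqref{defun} lies in $\dot H^2\times\dot H^1$ since $W_{\bell_k}\in\dot H^2(\mathbb R^5)$, $Y\in\mathcal S$, and $v_{\bell_k}$ is smooth with the decay of Lemma~\ref{pr:AS1}, and the uniform $\dot H^1\times L^2$ bound~\eqref{eq:un}, via Strichartz estimates and a further bootstrap, propagates a uniform $\dot H^2\times\dot H^1$ bound on $[T_0,S_n]$. The main obstacle is proving the coercivity and almost-monotonicity of $\mathcal H$ simultaneously: the transition-zone profile of $\vec\chi$ is exactly what makes the monotonicity computation work for non-collinear $\bell_k$, while~\eqref{sc} is precisely the condition under which the error it introduces in the coercivity estimate can still be absorbed.
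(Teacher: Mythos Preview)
Your proposal is essentially the paper's own proof: the bootstrap setting, the coercivity and almost-monotonicity of the localized functional $\mathcal H$ (Lemma~\ref{mainprop}), the closing of $\lambda_k,\mathbf y_k,\|\vec\varepsilon\|,z_k^+$ by integration (Lemma~\ref{le:bs1}), and the topological argument for $z_k^-$ (Lemma~\ref{le:bs2}), followed by references to~\cite{MMwave2} for~\eqref{cpq} and the $\dot H^2\times\dot H^1$ bound. Two small points where the paper is more careful than your sketch: the bootstrap assumption on $\vec\varepsilon$ must carry a large constant $C_0$ (i.e.\ $\|\vec\varepsilon\|\le C_0 t^{-3+\delta}$), since the energy argument only returns $\|\vec\varepsilon\|\lesssim \sqrt{C_0}\,t^{-3+\delta}$ and this closes only for $C_0$ large; and one does not simply take $\zeta_{k,n}^+=0$ but rather invokes Claim~\ref{le:modu2}, which for each $(\xi_k)\in\overline{\mathcal B}_{\mathbb R^K}(S_n^{-7/2})$ produces the unique $(\zeta_{k,n}^\pm)$ achieving $z_k^+(S_n)=0$ and $z_k^-(S_n)=\xi_k$ after modulation---this is what makes the shooting map well defined and continuous on a genuine ball.
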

\subsection{Proof of Proposition~\ref{pr:S1}, assuming Proposition~\ref{pr:s5}}
In view of the estimates obtained in Proposition~\ref{pr:s5} on $(\vec u_n(T_0))$ and~\eqref{cpq}, up to the extraction of a subsequence,  $(\vec u_n(T_0))$ converges strongly in $\dot H^1\times L^2$ to some $(u_0,u_1)^\mathsf{T}$ as $n\to +\infty$.
Consider the solution $u(t)$ of~\eqref{wave} associated to the initial data $(u_0,u_1)^\mathsf{T}$ at $t=T_0$.
Then, by the uniform bounds~\eqref{eq:un} and the continuous dependence of the solution of~\eqref{wave} with respect to its initial data in the energy space $\dot H^1\times L^2$ 
(see \emph{e.g.}~\cite{KM} and references therein), the solution $u$ is well-defined in the energy space on $[T_0,\infty)$.

Recall that we denote by $\lambda_{k,n}$ and $\mathbf y_{k,n}$ the parameters of the decomposition of $u_n$ on $[T_0,S_n]$.
By the uniform estimates in~\eqref{eq:deux}, using Ascoli's theorem and a diagonal argument, it follows that there exist continuous functions $\lambda_k$ and $\mathbf y_k$ such that up to the extraction of a subsequence, $\lambda_{k,n}\to \lambda_k$, $\mathbf y_{k,n}\to \mathbf y_k$ uniformly on compact sets of $[T_0,+\infty)$.
Moreover on $[T_0,+\infty)$,
\[
|\lambda_k(t)-\lambda_k^\infty|\lesssim t^{-1},\quad |\mathbf y_k(t)-\mathbf y_k^\infty|\lesssim t^{-1}.
\]
Passing to the limit in~\eqref{eq:un} for any $t\in [T_0,+\infty)$, 
we finish the proof of Proposition~\ref{pr:S1}.

The rest of this section is devoted to the proof of Proposition~\ref{pr:s5}.
\subsection{Bootstrap setting}
We denote by $\mathcal B_{{\mathbb{R}}^K}(r)$ (respectively, $\mathcal S_{{\mathbb{R}}^K}(r)$) the open ball (respectively, the sphere) of ${\mathbb{R}}^K$ of center $0$ and of radius $r>0$, for 
the norm $|(\xi_k)_k|= (\sum_{k=1}^{K} \xi_k^2 )^{1/2}$.

For $t=S_n$ and for $t<S_n$ as long as $u_{n}(t)$ is well-defined in $\dot{H}^1\times L^2$ and satisfies~\eqref{hyp:4}, we decompose $u_n(t)$ as in Lemma~\ref{le:4}. In particular, we denote by $(\varepsilon,\eta)$, $(\lambda_k)_k$, $(\mathbf{y}_k)_k$, $(z_k^{\pm})_k$ the parameters of the decomposition of $u_n$.

We start with a technical result similar to Lemma 3 in \cite{CMM}. This claim will allow us to adjust the initial values of
$(z_k^\pm(S_n))_k$  from the choice of $\zeta_{k,n}^\pm$ in \eqref{defun}.
\begin{claim}[Choosing the initial unstable modes] \label{le:modu2}
	There exist $n_0>0$ and $C>0$ such that, for all $n\geq n_0$, for any
	$(\xi_k)_{k\in\{1,\dots,K\}}\in \overline{\mathcal B}_{{\mathbb{R}}^K}(S_n^{-7/2})$, there exists
	a unique $(\zeta_{k,n}^{\pm})_{k\in\{1,\dots,K\}}\in \mathcal B_{{\mathbb{R}}^{2K}}(C S_n^{-7/2})$ such that
	the decomposition of $u_n(S_n)$ satisfies
	\begin{equation}\label{modu:2}
	z_k^-(S_n)=\xi_k,\quad
	z_k^+(S_n)=0,
	\end{equation}
	\begin{equation}\label{modu3}
	|\lambda_k(S_n)-\lambda_k^\infty|+
	|\mathbf{y}_k(S_n)-\mathbf{y}_k^\infty|
	+ \left\|\vec \varepsilon(S_n)\right\|_{\dot H^1\times L^2}+ \left\|\vec \varepsilon(S_n)\right\|_{\dot H^2\times \dot{H}^{1}}\lesssim S_n^{-7/2}.
	\end{equation}
\end{claim}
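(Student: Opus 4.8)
The plan is to adjust the finite-dimensional parameters $(\zeta_{k,n}^\pm)_k$ by a fixed-point argument at the single time $S_n$, in the spirit of Lemma~3 in~\cite{CMM}. Write $\zeta_n=(\zeta_{k,n}^\pm)_k\in{\mathbb{R}}^{2K}$ and fix $C>0$ to be determined. First I would check that the decomposition of Lemma~\ref{le:4} applies to $u_n(S_n)$ for $|\zeta_n|\leq C S_n^{-7/2}$: since the $\vec\theta_k^\infty\vec Z_{\bell_k}^\pm$ are bounded in $\dot H^1\times L^2$,
\[
\Big\|\vec u_n(S_n)-\sum_{k=1}^{K}\big(\vec W_k^\infty(S_n)+c_k\vec V_k^\infty(S_n)\big)\Big\|_{\dot H^1\times L^2}=\Big\|\sum_{k,\pm}\zeta_{k,n}^\pm(\vec\theta_k^\infty\vec Z_{\bell_k}^\pm)(S_n)\Big\|_{\dot H^1\times L^2}\lesssim|\zeta_n|<\delta_0
\]
for $S_n$ large, so Lemma~\ref{le:4} produces $\lambda_k(S_n),\mathbf y_k(S_n),\vec\varepsilon(S_n)$ and $z_k^\pm(S_n)=(\vec\varepsilon(S_n),\vec{\tilde\theta}_k\vec Z_{\bell_k}^\pm)_{L^2}$, with, by~\eqref{bounds},
\[
\sum_{k=1}^{K}\big(|\lambda_k(S_n)-\lambda_k^\infty|+|\mathbf y_k(S_n)-\mathbf y_k^\infty|\big)+\|\vec\varepsilon(S_n)\|_{\dot H^1\times L^2}\lesssim|\zeta_n|.
\]
Because $\vec u_n(S_n)$ is a finite sum of smooth functions with good decay ($\vec Z_{\bell_k}^\pm\in\mathcal S$, $\vec v_k$ controlled up to two derivatives by Lemma~\ref{pr:AS1}) and $\vec{\mathbf W}$ depends smoothly on its parameters, the same bound holds in $\dot H^2\times\dot H^1$, so that $\|\vec\varepsilon(S_n)\|_{\dot H^2\times\dot H^1}\lesssim|\zeta_n|$ and $\|\vec u_n(S_n)\|_{\dot H^2\times\dot H^1}\lesssim1$; once $|\zeta_n|\lesssim S_n^{-7/2}$ is proved, \eqref{modu3} follows. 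Then the map $F_n:\overline{\mathcal B}_{{\mathbb{R}}^{2K}}(C S_n^{-7/2})\to{\mathbb{R}}^{2K}$, $F_n(\zeta_n)=\big((z_k^-(S_n))_k,(z_k^+(S_n))_k\big)$, is of class $\mathcal C^1$, since the decomposition depends smoothly (via the implicit function theorem) on the data and the data depends linearly on $\zeta_n$; moreover $F_n(0)=0$ because at $\zeta_n=0$ one has $\vec u_n(S_n)=\sum_k(\vec W_k^\infty+c_k\vec V_k^\infty)(S_n)=\vec{\mathbf W}(S_n)$ at parameters $\lambda_k^\infty,\mathbf y_k^\infty$, hence $\vec\varepsilon(S_n)=0$.

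The core step is to show that $F_n$ is a small perturbation of a fixed invertible linear map. Writing $\vec\varepsilon(S_n)=\vec u_n(S_n)-\vec{\mathbf W}(S_n)$ and using that $\vec{\mathbf W}$, with parameters frozen at $\lambda_k^\infty,\mathbf y_k^\infty$, equals $\sum_k(\vec W_k^\infty+c_k\vec V_k^\infty)$, one gets
\[
\vec\varepsilon(S_n)=\sum_{k,\pm}\zeta_{k,n}^\pm(\vec\theta_k^\infty\vec Z_{\bell_k}^\pm)(S_n)-\Big(\vec{\mathbf W}(S_n)-\textstyle\sum_k(\vec W_k^\infty+c_k\vec V_k^\infty)(S_n)\Big).
\]
The parameter-difference term has leading part $\sum_k\big((\lambda_k(S_n)-\lambda_k^\infty)\partial_{\lambda_k}\vec{\mathbf W}+(\mathbf y_k(S_n)-\mathbf y_k^\infty)\cdot\partial_{\mathbf y_k}\vec{\mathbf W}\big)$, where $\partial_{\lambda_k}\vec W_k=-\lambda_k^{-1}\vec\theta_k\vec Z_{\bell_k}^\Lambda$ and $\partial_{\mathbf y_{k,j}}\vec W_k=-\lambda_k^{-1}\vec\theta_k\vec Z_{\bell_k}^{\nabla_j}$. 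Pairing $\vec\varepsilon(S_n)$ with $\vec{\tilde\theta}_k\vec Z_{\bell_k}^\pm$, using $(\vec\theta_k\vec G,\vec{\tilde\theta}_k\vec G')_{L^2}=\lambda_k(\vec G,\vec G')_{L^2}$ and the identities $(\vec Z_{\bell_k}^\Lambda,\vec Z_{\bell_k}^\pm)_{L^2}=(\vec Z_{\bell_k}^{\nabla_j},\vec Z_{\bell_k}^\pm)_{L^2}=0$ from (ii) of Lemma~\ref{surZZ}, the contribution of the $k$-th bubble's own modulation parameters \emph{vanishes at leading order}; the residual contribution of the $\vec v_k$-part of $\partial_{\lambda_k}\vec{\mathbf W}$ is $O(S_n^{-2+\delta}|\zeta_n|)$ by~\eqref{e:n40}, and the contribution of the bubbles $k'\ne k$ is $O(S_n^{-3}|\zeta_n|)$ by the spatial separation $|\bell_k-\bell_{k'}|S_n$ of the solitons together with the decay of $\vec Z_{\bell_k}^\pm$. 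Likewise $(\vec\theta_k^\infty\vec Z_{\bell_k}^{\pm'},\vec{\tilde\theta}_k\vec Z_{\bell_k}^{\pm})_{L^2}=\lambda_k^\infty(\vec Z_{\bell_k}^{\pm'},\vec Z_{\bell_k}^{\pm})_{L^2}+O(|\zeta_n|)$ and the $k'\ne k$ cross pairings are $O(S_n^{-3})$. Therefore
\[
F_n(\zeta_n)=\mathcal A_n\zeta_n+R_n(\zeta_n),\qquad R_n(0)=0,\qquad |R_n(\zeta_n)-R_n(\zeta_n')|\lesssim S_n^{-2+\delta}|\zeta_n-\zeta_n'|\ \text{on the ball},
\]
where $\mathcal A_n\to\mathcal A_\infty$ as $n\to\infty$, and $\mathcal A_\infty$ is block diagonal with $k$-th block $\lambda_k^\infty\,\mathrm{Gram}_{L^2}(\vec Z_{\bell_k}^+,\vec Z_{\bell_k}^-)$, which is invertible because $\vec Z_{\bell_k}^+$ and $\vec Z_{\bell_k}^-$ are linearly independent (eigenvectors of $-H_{\bell_k}J$ for the distinct eigenvalues $\pm\sqrt{\lambda_0}(1-|\bell_k|^2)^{1/2}$, by Lemma~\ref{surZZ}); in particular $\sup_{n\ge n_0}\|\mathcal A_n^{-1}\|<\infty$.

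Finally, given $(\xi_k)_k\in\overline{\mathcal B}_{{\mathbb{R}}^K}(S_n^{-7/2})$, set $\xi^\sharp=\big((\xi_k)_k,0\big)\in{\mathbb{R}}^{2K}$ and solve $F_n(\zeta_n)=\xi^\sharp$ by the Banach fixed-point theorem applied to $\Psi_n(\zeta_n)=\mathcal A_n^{-1}\xi^\sharp-\mathcal A_n^{-1}R_n(\zeta_n)$: choosing $C=2\sup_{n\ge n_0}\|\mathcal A_n^{-1}\|$ and $n_0$ so large that $\|\mathcal A_n^{-1}\|S_n^{-2+\delta}\le\tfrac12$, the map $\Psi_n$ sends $\overline{\mathcal B}_{{\mathbb{R}}^{2K}}(C S_n^{-7/2})$ into itself and is a contraction there, so there is a unique $\zeta_n=(\zeta_{k,n}^\pm)_k$ with $|\zeta_n|\le C S_n^{-7/2}$ and $F_n(\zeta_n)=\xi^\sharp$; this is exactly~\eqref{modu:2}, and together with the first paragraph it yields~\eqref{modu3}. (One may equivalently invoke the quantitative inverse function theorem, since $\|DR_n\|\lesssim S_n^{-2+\delta}$ on the ball.) I expect the main difficulty to be the second paragraph: the modulation parameters at time $S_n$ are perturbed by a quantity of the \emph{same order} $|\zeta_n|$ as the unknowns $\zeta_{k,n}^\pm$, and one must check that this perturbation does not contaminate the leading-order relation $z_k^\pm(S_n)\approx\lambda_k^\infty\,\mathrm{Gram}_{L^2}(\vec Z_{\bell_k}^+,\vec Z_{\bell_k}^-)(\zeta_{k,n}^+,\zeta_{k,n}^-)^{\mathsf{T}}$ — which is precisely where the orthogonality relations $(\vec Z_{\bell_k}^\Lambda,\vec Z_{\bell_k}^\pm)_{L^2}=(\vec Z_{\bell_k}^{\nabla_j},\vec Z_{\bell_k}^\pm)_{L^2}=0$ enter.
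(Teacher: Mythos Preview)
Your proposal is correct and follows exactly the approach the paper intends: the paper's own proof is nothing more than the sentence ``similar to Lemma~3 in~\cite{CMM} and we omit it'' together with the remark that~\eqref{modu3} follows from~\eqref{bounds}, and your fixed-point argument is precisely a detailed rendering of that reference. In particular, your identification of the key cancellation --- that $(\vec Z_{\bell_k}^\Lambda,\vec Z_{\bell_k}^\pm)_{L^2}=(\vec Z_{\bell_k}^{\nabla_j},\vec Z_{\bell_k}^\pm)_{L^2}=0$ from Lemma~\ref{surZZ}(ii) kills the leading-order contamination of $z_k^\pm$ by the modulation parameters --- is the heart of why the linearized map is a small perturbation of the invertible block-diagonal Gram matrix, and this is exactly how the argument in~\cite{CMM} is structured.
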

\begin{proof}[Sketch of the proof of Claim~\ref{le:modu2}]
	The proof of existence of $(\zeta_{k,n}^{\pm})_k$ in Claim~\ref{le:modu2} is similar to Lemma~3 in~\cite{CMM} and we omit it. Estimates in~\eqref{modu3} are consequences of~\eqref{bounds}.
\end{proof}
The proof of Proposition~\ref{pr:s5} is based on the following bootstrap estimates, for some $0<\delta\ll 1$ to be fixed later, $k\in\{1,\dots,K\} $ and $C_{0} $ to be chosen,
\begin{equation}\label{eq:BS}\left\{\begin{aligned}
& |\lambda_k(t)-\lambda_k^\infty|\leq C_0 t^{-1},\quad |\mathbf{y}_k(t)-\mathbf{y}_k^\infty|\leq C_0 t^{-1}, \\
& |z_k^\pm(t)|^2\leq t^{-7},\quad 
\left\|\vec \varepsilon(t)\right\|_{\dot H^1\times L^2}\leq C_{0}t^{-3+\delta}.\end{aligned}\right.
\end{equation}
Set
\begin{equation}\label{def:tstar}
T^*=T_n^*((\xi_{k})_k)=\inf\{t\in[T_0,S_n]\ ; \ \hbox{$u_n$ satisfies~\eqref{hyp:4} and (\ref{eq:BS}) holds on $[t,S_n]$} \} .
\end{equation}
In what follows, we will prove that there exists $T_0$ large enough and at least one choice of $(\xi_{k})_{k}\in \mathcal B_{{\mathbb{R}}^K}(S_n^{-7/2})$ so that $T^*=T_0$, which is enough to finish the proof of Proposition~\ref{pr:s5}.
For this, we derive general estimates for any $(\xi_k)_k\in \overline{\mathcal B}_{{\mathbb{R}}^K}(S_n^{-7/2})$ (see Lemma~\ref{le:bs1}) and use a topological argument (see Lemma~\ref{le:bs2}) to control the unstable directions, in order to strictly improve~\eqref{eq:BS} on $[T^*,S_n]$.

\smallbreak
As a consequence of the bootstrap estimates~\eqref{le:p} and~\eqref{eq:BS}, we have, for $k\in\{1,\dots,K\}$,
\[
\left|\frac {\dot \lambda_k }{\lambda_k} - \frac{a_k}{\lambda_k^{\frac 12}t^2}\right|+|\dot {\mathbf{y}}_k| \lesssim \left\|\vec \varepsilon(t)\right\|_{\dot H^1\times L^2} +t^{-4+\delta} \lesssim C_{0}t^{-3+\delta}.
\]
In particular, from the expression of ${\rm Mod}_{{\mathbf V}}$ and ${\rm Mod}_{{\mathbf Z}}$ in Lemma~\ref{le:43},~\eqref{e:n40},~\eqref{rk33},~\eqref{le:p} and~\eqref{eq:BS}, we have
\begin{equation}\label{lemvz}
\|{\rm Mod}_{{\mathbf V}}\|_{\dot{H}^{2}\cap\dot{H}^{1}}\le t^{-4}\quad \mathrm{and}\quad \|{\rm Mod}_{{\mathbf Z}}\|_{{H}^{1}}\le t^{-4+\delta}.
\end{equation}
From the expression of ${\rm Mod}_{{\mathbf W}}$ and ${\rm Mod}_{{\mathbf X}}$ in Lemma~\ref{le:43}, for all $\alpha \in {\mathbb{N}}^5$,\begin{equation}\label{Idem}
|\partial_x^\alpha{\rm Mod}_{{\mathbf W}}(t)|\lesssim t^{-3+\delta} \sum_{k=1}^{K} |W_k|^{1+\frac{|\alpha|}{3}},\quad
|\partial_x^\alpha{\rm Mod}_{{\mathbf X}}(t)|\lesssim t^{-3+\delta} \sum_{k=1}^{K} |W_k|^{1+\frac{1+|\alpha|}{3}}.
\end{equation}
\subsection{Energy functional}
Recall that 
	$f(u)=|u|^{\frac 43} u$ and $F(u)=\frac 3{10} |u|^{\frac {10}3}$.
Let
\begin{equation}\label{hyp:sec3}
\forall k\in\{1,\ldots,K\},\quad 
\boldsymbol\ell_k = \sum_{i=1}^{5}\ell_{k,i} \mathbf{e}_i \quad \hbox{where}\quad |\boldsymbol\ell_k| < 1,
\end{equation}
and denote (see~\eqref{sc})
\begin{equation}\label{spe boud}
\overline{\ell}=\big(\sum_{i=1}^{5}(\max_{k\in\{1,\cdots,K\}}\ell^{2}_{k,i})\big)^{\frac{1}{2}}<\frac{3}{5}.
\end{equation}
We fix $\delta<\frac{1}{100}$ small enough, such that
\begin{equation}\label{bd}
\begin{aligned}
& \overline{\ell}\le \left(\frac{(2-4\delta)(8-4\delta)}{(6-4\delta)^{2}}\right)^{\frac{1}{2}},\quad \overline{\ell}\le\left(\frac{(1-4\delta)(9-4\delta)}{(5-4\delta)^{2}}\right)^{\frac{1}{2}},\\
& \overline{\ell}\le \frac{(5-4\delta)}{(6-4\delta)},\quad \overline{\ell}\le \left(\frac{4-4\delta}{6-4\delta}\right)^{\frac{1}{2}},\quad \overline{\ell}\le\left(\frac{(3-4\delta)(7-4\delta)}{(6-4\delta)^{2}}\right)^{\frac{1}{2}}.
\end{aligned}
\end{equation}
Moreover, we set,
\begin{equation*}
\forall i\in \{1,\ldots,5\}\quad -1<\overline\ell_{1,i}<\cdots<\overline\ell_{K_{i},i}<1\quad \mathrm{such\ that}\ \{\ell_{1,i},\cdots,\ell_{K,i}\}=\{\overline{\ell}_{1,i}\cdots,\overline{\ell}_{K_{i},i}\}.
\end{equation*}
We denote $I=\{i|i\in \{1,\cdots,5\}\ \mathrm{and}\ K_{i}\ge 2\}$. For
\begin{equation*}
0<\sigma<\frac 1{10} \min_{i\in I}(\overline\ell_{k+1,i}-\overline\ell_{k,i})
\end{equation*}
small enough to be fixed, we set
\begin{align*}
\hbox{for $k=1,\dots, K_{i}-1$},\quad 		&\overline\ell_{k,i}^{+}=\overline\ell_{k,i}+\sigma(\overline\ell_{k+1,i}-\overline\ell_{k,i}),\\
\hbox{for $k=2,\dots, K_{i}$},\quad  	&\overline\ell_{k,i}^{-}=\overline\ell_{k,i}-\sigma(\overline\ell_{k,i}-\overline\ell_{k-1,i}),
\end{align*}
and for $t>0$, we denote, 
$$
\Omega_{1,i}(t) =  ( (\overline\ell_{1,i}^+ t,\overline\ell_{2,i}^- t)\cup\ldots \cup (\overline\ell_{K_{i}-1,i}^+ t,\overline\ell_{K_{i},i}^- t)) ,\quad
\Omega_{0,i}(t) = \mathbb{R}\setminus \Omega_{1,i}(t)\quad \mathrm{for}\ i\in I.
$$
and
$$ \Omega_{1,i}(t)=\emptyset ,\quad \Omega_{0,i}(t) = \mathbb{R}\quad \mathrm{for}\ i\notin I.$$

When $i\in I$, we consider the continuous function $\chi_i(t,x)=\chi_{i}(t,x_i)$ defined as follows, for all $t>0$,
\begin{equation}\label{defchiK}
\left\{\begin{aligned}
&\hbox{$\chi_{i}(t,x) =  \overline\ell_{1,i}$ for $x_i\in (-\infty,  \overline\ell_{1,i}^+ t]$},\\
& \hbox{$\chi_{i}(t,x) = \overline\ell_{k,i} $ for $x_i\in [\overline\ell_{k,i}^- t, \overline\ell_{k,i}^+ t]$, for $k\in \{2,\ldots,K_{i}-1\}$,}
\\ & \hbox{$\chi_{i}(t,x)= \overline\ell_{K_{i},i}$ for $x_i\in [\overline\ell_{K_{i},i}^- t,+\infty)$},\\
& \chi_{i}(t,x) = \frac{x_i}{(1-2\sigma)t}  - \frac {\sigma}{1-2 \sigma} (\overline\ell_{k+1,i}+\overline\ell_{k,i}) 
\hbox{ for $x_i \in [\overline\ell_{k,i}^+ t,\overline\ell_{k+1,i}^-t ]$, $k\in \{1,\ldots,K_{i}-1\}$}.
\end{aligned}
\right.
\end{equation}
In particular,
\begin{equation}\label{derchi}\left\{\begin{aligned}
& \partial_t \chi_i(t,x) =0,\quad  \nabla \chi_i(t,x)=0, \quad \mathrm{on}\ \mathbb{R}^{i-1}\times\hbox{$\Omega_{0,i}(t)$}\times \mathbb{R}^{5-i},\\
& \partial_{x_i} \chi_i(t,x)= \frac{1}{(1-2\sigma)t}  \quad \mathrm{for}\ x\in\mathbb{R}^{i-1}\times\hbox{$\Omega_{1,i}(t)$}\times \mathbb{R}^{5-i},\\
& \partial_{t} \chi_i(t,x)= -\frac 1t \frac{x_i}{(1-2\sigma)t} \quad \mathrm{for}\ x\in\mathbb{R}^{i-1}\times\hbox{$\Omega_{1,i}(t)$}\times \mathbb{R}^{5-i}.
\end{aligned} \right.\end{equation}
When $i\notin I$, we consider the continuous function $\chi_i(t,x)=\chi_{i}(t,x_i)=\ell_{1,i}$, for all $t>0$.
\\ We denote
\begin{equation*}
\Omega=\bigcup_{\sum_{i} s_{i}\ne 0}\Omega_{s_{1},1}\times\cdots\times\Omega_{s_{5},5}\quad \mathrm{and}\quad \vec{\chi}=(\chi_1,\cdots,\chi_5).
\end{equation*}
The choice of $\vec{\chi}$ in this paper is different from that in~\cite{MMwave1,MMwave2} to take into account non-colinear speeds.

We define (see~\cite{MMT,MMwave1,MMwave2,CMkg} for similar energy functional)
\[\mathcal H =\int \left\{|\nabla\varepsilon|^2+|\eta|^2- 2 (F ({\mathbf W} +\varepsilon )-F ({\mathbf W} )-f ({\mathbf W} )\varepsilon ) \right\}
+ 2 \int (\vec{\chi}\cdot\nabla \varepsilon )\eta.
\] 
\begin{lemma}\label{mainprop}
	There exists $\mu>0$ such that, for $t\in[T^{*},S_{n}] $, the following hold.
	\begin{enumerate}[label=\emph{(\roman*)}]
		\item\emph{Bound.}
		\begin{equation}\label{boun}
		|\mathcal H(t)| \leq \frac {\|\vec \varepsilon(t)\|_{\dot H^1\times L^2}^2}{\mu}.
		\end{equation}
		\item\emph{Coercivity.}
		\begin{equation}\label{coer}
		\mathcal H(t) \geq \mu\|\vec \varepsilon(t)\|_{\dot H^1\times L^2}^2 - \frac {t^{-7}}{\mu} .
		\end{equation}
		\item\emph{Time variation.} For all $0<\delta<\frac{1}{100}$ small enough to satisfy~\eqref{bd},
		\begin{equation}\label{time}
		- \frac d{dt} \left( t^{6-3\delta} \mathcal H\right) \le \frac{1}{\mu}C_{0}t^{-1-\delta}.
		\end{equation}
	\end{enumerate}
\end{lemma}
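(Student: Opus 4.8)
The plan is to establish the three assertions in turn, treating the bound and coercivity as relatively standard consequences of the expansions already available, and concentrating the real work on the time-variation estimate~\eqref{time}.

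For part (i), the bound~\eqref{boun}, I would expand $F({\mathbf W}+\varepsilon)-F({\mathbf W})-f({\mathbf W})\varepsilon$ using a second-order Taylor formula, which gives $\int f'({\mathbf W})\varepsilon^2 + O(|{\mathbf W}|^{1/3}|\varepsilon|^3 + |\varepsilon|^{10/3})$; the quadratic term is controlled by $\|\varepsilon\|_{\dot H^1}^2$ using~\eqref{holder1} and the pointwise bound~\eqref{e:WX} on ${\mathbf W}$, the cubic and higher terms by Sobolev~\eqref{sobolev} together with the smallness of $\|\vec\varepsilon\|_{\dot H^1\times L^2}$ coming from~\eqref{eq:BS}, and the last term $2\int(\vec\chi\cdot\nabla\varepsilon)\eta$ by Cauchy–Schwarz since $\vec\chi$ is bounded (each $\chi_i$ ranges between $\overline\ell_{1,i}$ and $\overline\ell_{K_i,i}$, with $|\vec\chi|\le\overline\ell<1$ on $\Omega$). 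For part (ii), the coercivity~\eqref{coer}, I would localize $\varepsilon$ near each soliton using a partition of unity subordinate to the decomposition $\mathbb{R}^5 = \bigcup_k(\text{neighborhood of }W_k)\cup(\text{transition region})$; near $W_k$ the function $\vec\chi$ is exactly $\bell_k$, so the quadratic form reduces (after the change of variables $x\mapsto x_{\bell_k}$) to the one appearing in (iii) of Lemma~\ref{surZZ}, giving coercivity modulo the directions $\Lambda_k W_k$, $\partial_{x_j}W_k$, $\vec Z_{\bell_k}^\pm$; the first two are eliminated by the orthogonality conditions~\eqref{ortho}, and the last contribute $-C\sum_k(|z_k^+|^2+|z_k^-|^2)\ge -Ct^{-7}$ by the bootstrap bound on $z_k^\pm$ in~\eqref{eq:BS}. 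In the transition regions $\vec\chi$ is close to $x/t$ and the solitons are exponentially small there, so no negative contribution from the potential term survives — this is precisely where the choice of $\sigma$ and the structure of $\Omega$ matter, and the positivity of $1-|\vec\chi|^2$ (guaranteed by $\overline\ell<3/5<1$) provides the needed ellipticity of the first-order part $|\nabla\varepsilon|^2 + 2(\vec\chi\cdot\nabla\varepsilon)\eta + |\eta|^2$.

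For part (iii), the time variation~\eqref{time}, the computation is the heart of the matter. I would differentiate $\mathcal H$ in $t$, substituting the equation~\eqref{syst_e} for $\partial_t\varepsilon$ and $\partial_t\eta$. After integration by parts, the dangerous-looking terms combine: the wave-operator contributions cancel against the cross term $2\int(\vec\chi\cdot\nabla\varepsilon)\eta$ up to terms involving $\partial_t\vec\chi$ and $\nabla\vec\chi$, which by~\eqref{derchi} are supported in the transition region $\Omega$ and carry a factor $t^{-1}$. The key algebraic identity is that $-\frac{d}{dt}\mathcal H$ produces, on $\Omega$, a quadratic form in $(\nabla\varepsilon,\eta)$ of the schematic shape $\frac1t\int_\Omega\big(|\nabla\varepsilon|^2 - \frac{x}{t}\cdot(\text{stuff}) + |\eta|^2\big)$ whose sign is governed by comparing $|x_i/t|$ with the $\overline\ell_{k,i}$; the five inequalities in~\eqref{bd} are exactly the conditions ensuring that, after multiplying by the weight $t^{6-3\delta}$ and accounting for the $t^{6-3\delta}$-derivative producing $(6-3\delta)t^{5-3\delta}\mathcal H$, the resulting quadratic form in the transition zones is nonnegative (this is the reason for the exponents $6-4\delta$, $5-4\delta$, etc.\ appearing there — they come from the homogeneity of $W_k$-tails and of $v_k$ of size $t^{-2}$). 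The remaining terms are error terms: the modulation terms ${\rm Mod}_{\mathbf W}, {\rm Mod}_{\mathbf X}, {\rm Mod}_{\mathbf V}, {\rm Mod}_{\mathbf Z}$ are estimated via~\eqref{le:p},~\eqref{lemvz},~\eqref{Idem}; the interaction remainder ${\mathbf R}_{\mathbf X}$ via~\eqref{RWX}; the nonlinear remainders ${\mathbf R}_1,{\mathbf R}_2$ via~\eqref{R1bis} together with Claim~\ref{WW}; and $\|\vec\varepsilon\|_{\dot H^1\times L^2}\lesssim C_0 t^{-3+\delta}$ from~\eqref{eq:BS}. Collecting, every error is $O(t^{6-3\delta}\cdot t^{-7+\delta'}) = O(t^{-1-\delta})$ for $\delta$ small, which gives~\eqref{time}.

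The main obstacle I anticipate is the careful bookkeeping in part (iii) showing that the localized quadratic form on the transition regions is nonnegative under~\eqref{bd}: one must split $\Omega$ into the product cells $\Omega_{s_1,1}\times\cdots\times\Omega_{s_5,5}$, and on each cell where several coordinates are in their transition sub-intervals the relevant quadratic form in $\partial_{x_1}\varepsilon,\dots,\partial_{x_5}\varepsilon,\eta$ mixes the $\chi_i$'s, so the scalar condition $\overline\ell<3/5$ must be upgraded to the simultaneous conditions on $\big(\sum_i\max_k\ell_{k,i}^2\big)^{1/2}$ displayed in~\eqref{bd} — verifying these make the discriminants of all the relevant $2\times2$ (and effectively higher) blocks of the correct sign is where the non-colinearity genuinely enters and where the choice of $\vec\chi$ in~\eqref{defchiK} pays off. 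Everything else is a (lengthy but routine) matter of applying the pointwise bounds from Lemmas~\ref{pr:AS1} and~\ref{le:43} and the interaction estimates already proved.
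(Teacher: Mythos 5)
Your proposal follows essentially the same route as the paper: Taylor expansion plus Sobolev/H\"older for the bound; localization near each soliton combined with Lemma~\ref{surZZ}(iii), the orthogonality conditions~\eqref{ortho} and the bootstrap control of $z_k^\pm$ for the coercivity; and, for the time variation, substitution of~\eqref{syst_e}, integration by parts, and a case-by-case analysis of the quadratic form generated by $\nabla\vec\chi$ and $\partial_t\vec\chi$ on the product cells of $\Omega$, with the conditions~\eqref{bd} entering exactly where you say they do. Two minor inaccuracies that do not affect the strategy: the solitons are only polynomially (not exponentially) small in the transition regions, which still suffices since $\varphi_k^2\lesssim t^{-4\alpha}$ there; and the exponents $6-4\delta$, $5-4\delta$ in~\eqref{bd} come from the Cauchy--Schwarz balancing in the five cases (one per number of coordinates in transition) against the target coefficient $6-4\delta<6-3\delta$, not from the homogeneity of the $W_k$-tails.
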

\begin{proof}[Proof of Lemma~\ref{mainprop}]
	\textbf{Proof of~\eqref{boun}.}
	Since
	\[
	| F({\mathbf W}+\varepsilon) - F({\mathbf W}) - f({\mathbf W}) \varepsilon|\lesssim |\varepsilon|^{\frac {10}3} + |\varepsilon|^2|{\mathbf W}|^{\frac 43},
	\]
	estimate~\eqref{boun} on $\mathcal H$ follows from~\eqref{sobolev},~\eqref{holder1} and $\|\vec \varepsilon\|_{\dot H^1\times L^2}+\|{\mathbf W}\|_{\dot H^1}\lesssim 1$.
	
	\smallbreak
	\textbf{Proof of~\eqref{coer}.}
	Set
	\[ 
	{\mathcal N_{\Omega}}(t) = \int_{\Omega} \left( |\nabla \varepsilon(t)|^2 + \eta^2(t) + 2 (\vec{\chi}(t)\cdot\nabla \varepsilon(t) ) \eta(t)\right)
	,\quad 
	{\mathcal N_{\Omega^C}}(t) = \int_{{\Omega^C}} \left( |\nabla \varepsilon(t)|^2 + \eta^2(t) \right).
	\]
	Note that, since $|\vec\chi|<\overline \ell$,
	\begin{equation}\label{nint} 
	{\mathcal N_{\Omega}}\ge \int_{\Omega} \left( |\nabla \varepsilon|^2 + \eta^2\right)-2\overline{\ell}\int_{\Omega} |\nabla \varepsilon||\eta|\ge(1-\overline \ell) \int_{\Omega}\left( |\nabla \varepsilon|^2 + \eta^2\right).
	\end{equation} 
	To obtain \eqref{coer}, we claim the following estimates, for some small $\alpha>0$
\begin{equation}\label{lF} 
\mathcal H(t) \geq 
{\mathcal N_{\Omega}}(t) + \mu {\mathcal N_{\Omega^C}}(t) -\frac {t^{-7}}\mu - \frac{t^{-4\alpha}}\mu \left\|\vec \varepsilon\right\|_{\dot H^1\times L^2}^2 - \frac{t^{-1}}\mu \left\|\vec \varepsilon\right\|_{\dot H^1\times L^2}^2- \frac{1}\mu \left\|\vec \varepsilon\right\|_{\dot H^1\times L^2}^3.
\end{equation}
	To prove~\eqref{lF}, we decompose 
	$
	\mathcal H = {\mathcal H_1}+ {\mathcal H_2} + {\mathcal H_3},
	$
	where
	\begin{align*}
	{\mathcal H_1} &
	= \int |\nabla \varepsilon|^2 - \sum_{k=1}^{K} \int f'(W_k) \varepsilon^2 + \int \eta^2 + 2\int (\vec{\chi}\cdot\nabla \varepsilon) \eta,
	\\{\mathcal H_2} &
	= - 2 \int \left( F\left({\mathbf W} +\varepsilon\right)-F\left({\mathbf W}\right) -f\left({\mathbf W}\right)\varepsilon - \frac 12 f' \left({\mathbf W}\right) \varepsilon^2 \right),
	\\{\mathcal H_3} &
	= \int \left( \sum_{k=1}^{K} f'(W_k) - f' \left({\mathbf W}\right) \right) \varepsilon^2.
	\end{align*}
	We claim the following estimates
	\begin{align}
	& {\mathcal H_1} \geq {\mathcal N_{\Omega}} + \mu {\mathcal N_{\Omega^C}} - \frac {t^{-7}}{\mu} - \frac{t^{-4\alpha}}\mu \left\|\vec \varepsilon\right\|_{\dot H^1\times L^2}^2, \label{bisff2}\\
	& |{\mathcal H_2}|+|{\mathcal H_3}|\lesssim \left\|\vec\varepsilon\right\|_{\dot H^1\times L^2}^3+t^{-1} \left\|\vec\varepsilon\right\|_{\dot H^1\times L^2}^2 .\label{bisff3}
	\end{align}
	which imply~\eqref{lF} for $T_0$ large enough.
	\smallbreak
	
	\emph{Proof of~\eqref{bisff2}.} For $\varphi_\alpha$ defined in~\eqref{phia}, set
	\[
	\varphi_k(t,x) = \varphi_\alpha \left(\frac {x-{\boldsymbol{\ell}}_{k} t - \mathbf{y}_{k}(t)}{\lambda_k(t)}\right).
	\]
	We decompose $\mathcal H_1$ as follows
\begin{align*}
{\mathcal H_1} & = {\mathcal N_{\Omega}} + \sum_{k=1}^{K} \left(\int |\nabla \varepsilon|^2\varphi_k^2 - \int f'(W_k)\varepsilon^2 + \int \eta^2 \varphi_k^2+ 2 \int (\vec{\chi}\cdot \nabla \varepsilon) \eta\varphi_k^2 \right)\\
&\quad +\int_{\Omega^C} \left(|\nabla \varepsilon|^2+\eta^2+2 (\vec{\chi}\cdot\nabla \varepsilon)\eta\right)\left(1-\sum_{k=1}^{K} \varphi_k^2\right)\\
&\quad- \int_{\Omega} \left(|\nabla \varepsilon|^2+\eta^2+2 (\vec{\chi}\cdot\nabla \varepsilon)\eta\right)\left(\sum_{k=1}^{K} \varphi_k^2\right)\\	&\quad +2 \sum_{k=1}^{K} \int \left((\vec{\chi}- \boldsymbol{\ell}_k)\cdot\nabla\varepsilon\right) \eta \varphi_k^2
={\mathcal N_{\Omega}}+ {\mathcal H_{1,1}}+{\mathcal H_{1,2}} +{\mathcal H_{1,3}}+{\mathcal H_{1,4}}.
\end{align*}
	By Lemma~\ref{surZZ} (iii), the orthogonality conditions on $\vec \varepsilon$ and a change of variable, we have
\begin{align*}
{\mathcal H_{1,1}} & \geq \mu_{0} \int \left( |\nabla \varepsilon|^2+\eta^2\right) \left(\sum_{k=1}^{K}\varphi_k^2\right)
-\frac 1{\mu_{0}} \sum_{k=1}^{K} \left( (z_k^-)^2 + (z_k^+)^2\right).
\end{align*}
Thus, using~\eqref{eq:BS},
\begin{align*}
{\mathcal H_{1,1}} & \geq \mu_{0} \int \left( |\nabla \varepsilon|^2+\eta^2\right) \left(\sum_{k=1}^{K}\varphi_k^2\right)
-\frac 1{\mu_{0}} t^{-7}
\geq \mu_{0} \int_{\Omega^C} \left( |\nabla \varepsilon|^2+\eta^2\right) \left(\sum_{k=1}^{K}\varphi_k^2\right)
-\frac 1{\mu_{0}}t^{-7}.
\end{align*}
Next, note that if $x$ is such that
$\varphi_k(t,x) >\frac 1K$, then $\varphi_{k'}^2(x) \lesssim t^{-4 \alpha}$ for $k'\neq k$.
Thus, there exists $\mu_{1}$ the estimate $1-\sum_{k=1}^{K} \varphi_k^2 \ge - \frac{t^{-4 \alpha}}{\mu_{1}}$ holds on ${\mathbb{R}}$.
By direct computations (with the notation $v_+ = \max (0,v)$,
\begin{align*}
{\mathcal H_{1,2}} & = \overline{\ell}\int_{\Omega^C} \left|\frac{\vec{\chi}\cdot\nabla\varepsilon}{\overline{\ell}}+\eta\right|^{2}\left(1-\sum_{k=1}^{K} \varphi_k^2\right)+(1-\overline{\ell})\int_{\Omega^C}\eta^{2}\left(1-\sum_{k=1}^{K} \varphi_k^2\right)\\
&+ \int_{\Omega^C} \left(|\nabla \varepsilon|^2-\frac{|\vec{\chi}\cdot\varepsilon|^{2}}{\overline{\ell}}\right)\left(1-\sum_{k=1}^{K} \varphi_k^2\right)\\
&\geq (1-\overline \ell) \int_{\Omega^C} \left(| \nabla \varepsilon|^2 + \eta^2\right)\left(1-\sum_{k=1}^{K} \varphi_k^2\right)_+ -\frac{\left\|\vec \varepsilon\right\|_{\dot H^1\times L^2}^2 }{\mu_{1}}t^{-4\alpha}.
\end{align*}
Last, by the definition of $\vec{\chi}$, the decay property of $\varphi_{\alpha}$ and $\eqref{eq:BS}$, we have
\begin{equation*}
\|\varphi_{k}^{2}\|_{L^{\infty}(\Omega)}\lesssim t^{-4\alpha}\quad\mathrm{and}\quad \|(\vec{\chi}-\boldsymbol{\ell}_{k})\varphi_{k}^{2}\|_{L^{\infty}}\lesssim t^{-4\alpha}.
\end{equation*}
Thus, $
|{\mathcal H_{1,3}}| \lesssim t^{-4\alpha} \left\|\vec \varepsilon\right\|_{\dot H^1\times L^2}^2 
$ and $
|{\mathcal H_{1,4}}| \lesssim t^{-4\alpha} \left\|\vec \varepsilon\right\|_{\dot H^1\times L^2}^2 
$.
Therefore, for some $\mu >0$, and $T_0$ large enough, we have
$$
{\mathcal H_{1,1}}+{\mathcal H_{1,2}}+{\mathcal H_{1,3}}+{\mathcal H_{1,4}}
\geq \mu \Nsol  - \frac 1\mu t^{-7} - \frac 1\mu t^{-4\alpha} \left\|\vec \varepsilon\right\|_{\dot H^1\times L^2}^2  .
$$
\emph{Proof of~\eqref{bisff3}.}
Using~\eqref{sobolev},~\eqref{holder1},~\eqref{e:WX} and~\eqref{eq:BS}, we have
\[
|{\mathcal H_{2}}| \lesssim\int|\varepsilon|^{\frac{10}3}+|\varepsilon|^3|{\mathbf W}|^{\frac13} \lesssim \left\|\vec\varepsilon\right\|_{\dot H^1\times L^2}^{3}.
\]
Last, we observe that by~\eqref{R1bis},
$
|{\mathcal H_{3}}| \lesssim \|{\mathbf{R}}_{1}\|_{L^{\frac {10}7}} \|\varepsilon\|_{L^{\frac{10}3}}^2 \lesssim t^{-1} \left\|\vec\varepsilon\right\|_{\dot H^1\times L^2}^2.
$ 
\smallbreak
\textbf{Proof of~\eqref{time}.} 
We decompose
\begin{align*}
\frac d{dt} \mathcal H =& \int \partial_t \left\{|\nabla\varepsilon|^2+|\eta|^2- 2 (F ({\mathbf W} +\varepsilon )-F ({\mathbf W} )-f ({\mathbf W} )\varepsilon ) \right\}\\
&+ 2 \int \vec\chi \cdot\partial_t\left( (\nabla \varepsilon) \eta \right) 
+ 2 \int (\partial_t \vec\chi) \cdot(\nabla \varepsilon) \eta = {\bf g_1} + {\bf g_2}.
\end{align*}
We claim the following estimates
\begin{align*} 
{\bf g_1} & = 2 \int \varepsilon \left( -\Delta {\rm Mod}_{{\mathbf W}} - f'({\mathbf W}) {\rm Mod}_{{\mathbf W}} \right) +2 \int \eta{\rm Mod}_{{\mathbf X}} \nonumber\\
& \quad + 2 \int \left( \sum_{k=1}^{K} \boldsymbol\ell_k \cdot \nabla  W_k\right) \left( f({\mathbf W} + \varepsilon) - f({\mathbf W})-f'({\mathbf W}) \varepsilon\right)
+ O\left(C_{0}t^{-7+2\delta}\right),
\end{align*}
\begin{align*}
{\bf g_2} & = 
\int(-\eta^{2}+|\nabla \varepsilon|^{2})\mathrm{div}\vec \chi -2\sum_{j,k=1}^{5}(\partial_{x_k}\varepsilon)(\partial_{x_j}\varepsilon)\partial_{x_j}\chi_{k}+2\sum_{k=1}^{5}(\partial_{x_k}\varepsilon)\eta\partial_{t}\chi_{k}\\
& \quad - 2\int \vec\chi\cdot\left( \sum_{k=1}^{K} \nabla W_k\right)\left( f({\mathbf W} + \varepsilon) - f({\mathbf W})-f'({\mathbf W}) \varepsilon\right)\nonumber\\
& \quad +2 \int (\vec\chi \cdot \nabla {\rm Mod}_{{\mathbf W}}) \eta - 2 \int \varepsilon (\vec\chi \cdot \nabla{\rm Mod}_{{\mathbf X}})
+ O\left(C_{0}t^{-7+2\delta}\right).
\end{align*}

\emph{Estimate on $\bf g_1$.}
From direct differentiation and integration by parts, we have
\begin{align*}
{\bf g_1} & = 2 \int (\partial_t \varepsilon) \left( -\Delta \varepsilon - \left( f({\mathbf W} + \varepsilon) - f({\mathbf W}) \right)\right) +2 \int (\partial_t \eta) \eta\\
& \quad -2 \int (\partial_t {\mathbf W}) \left( f({\mathbf W} + \varepsilon) - f({\mathbf W})-f'({\mathbf W}) \varepsilon \right) .
\end{align*}
Using~\eqref{syst_WX} and~\eqref{syst_e}
\begin{align*}
{\bf g_1} & = 2 \int \left(-\Delta \varepsilon -f'({\mathbf W})\varepsilon\right){\rm Mod}_{\mathbf W} + 2 \int \eta{\rm Mod}_{\mathbf X} \\
&\quad +2\int \left(-\Delta \varepsilon -f'({\mathbf W})\varepsilon\right){\rm Mod}_{\mathbf{V}} + 2\int \eta {{\mathbf R}}_{\mathbf X}+2\int \eta {\rm Mod}_{\mathbf Z} \\
& \quad -2 \int {\mathbf X} \left( f({\mathbf W} + \varepsilon) - f({\mathbf W})-f'({\mathbf W}) \varepsilon\right) 
={\bf g_{1,1}} + {\bf g_{1,2}}+ {\bf g_{1,3}}.\end{align*}
We integrate by parts terms in $\bf g_{1,1}$.
Next, by~\eqref{sobolev},~\eqref{holder1},~\eqref{RWX},~\eqref{e:WX},~\eqref{eq:BS} and~\eqref{lemvz}, we obtain
\begin{align*}
|{\bf g_{1,2}}|&\lesssim \|\varepsilon\|_{\dot H^1}\|{\rm Mod}_{\mathbf{V}}\|_{\dot H^1}+\|\eta\|_{L^2}\|{{\rm Mod}}_{\mathbf Z}\|_{L^2}\\
&+\|\eta\|_{L^2}\|{{\mathbf R}}_{\mathbf X}\|_{L^2}+\|\varepsilon\|_{L^{\frac{10}3}} \|{\mathbf W}\|_{L^{\frac{10}3}}^{\frac 43} \|{\rm Mod}_{\mathbf{V}}\|_{L^{\frac {10}3}}\lesssim C_{0}t^{-7+2\delta}.
\end{align*}
Recall from~\eqref{def:Wapp} that ${\mathbf X} = \sum_{k=1}^{K} \left(-\boldsymbol\ell_k \cdot\nabla W_k +c_k z_k\right)$. Moreover,
from~\eqref{e:n40} and the definition of $z_k$ in~\eqref{def.wk}, it follows that
$
\|z_k\|_{L^{\frac {10}3}}\lesssim \|\nabla z_k\|_{L^2}\lesssim t^{-2}.
$
Thus,
\begin{multline*}
\left|{\bf g_{1,3}} - 2 \int \left( \sum_{k=1}^{K} \boldsymbol\ell_k\cdot \nabla W_k\right) \left( f({\mathbf W} + \varepsilon) - f({\mathbf W})-f'({\mathbf W}) \varepsilon\right)\right|\\
\lesssim 
\int \left(\sum_{k=1}^{K} |z_k|\right) \left( |\varepsilon|^{\frac 73} +\varepsilon^2 |{\mathbf W}|^{\frac 13}\right)
\lesssim\sum_{k=1}^{K}\|z_{k}\|_{L^{\frac{10}{3}}}(\|\varepsilon\|_{L^{\frac{10}{3}}}^{\frac{7}{3}}+\|\varepsilon\|_{L^{\frac{10}{3}}}^{2})\lesssim C_{0}t^{-7+2\delta}.
\end{multline*}

\smallbreak
\emph{Estimate on $\bf g_2$.}
\begin{align*}
{\bf g_2} & = 2 \int (\vec \chi \cdot \nabla \partial_t \varepsilon) \eta+ 2 \int (\vec\chi \cdot\nabla \varepsilon) \partial_t \eta ++ 2 \int (\partial_t \vec\chi) \cdot(\nabla \varepsilon) \eta\\
& = 2 \int (\vec \chi \cdot \nabla \eta) \eta + 2 \int (\vec\chi\cdot \nabla \varepsilon) \left[ \Delta \varepsilon + \left( f({\mathbf W} + \varepsilon) - f({\mathbf W}) \right) \right]+ 2 \int (\partial_t \vec\chi) \cdot(\nabla \varepsilon) \eta\\
& \quad+2 \int (\vec \chi \cdot \nabla {\rm Mod}_{{\mathbf W}}) \eta + 2 \int (\vec\chi \cdot\nabla \varepsilon) {\rm Mod}_{{\mathbf X}}
+2 \int (\vec\chi \cdot \nabla {\rm Mod}_{\mathbf{V}}) \eta\\
&\quad +2\int (\vec\chi \cdot \nabla \varepsilon) {{\rm Mod} }_{{\mathbf Z}}+ 2 \int (\vec\chi \cdot \nabla \varepsilon) {{\mathbf R}}_{{\mathbf X}}.
\end{align*}
Note that by integration by parts 
\begin{align*}
&2 \int (\vec \chi \cdot \nabla \eta) \eta + 2 \int (\vec\chi\cdot \nabla \varepsilon)\Delta \varepsilon+2 \int (\partial_t \vec\chi) \cdot(\nabla \varepsilon) \eta\\
&=(-\eta^{2}+|\nabla \varepsilon|^{2})\mathrm{div}\vec \chi -2\sum_{j,k=1}^{5}(\partial_{x_k}\varepsilon)(\partial_{x_j}\varepsilon)\partial_{x_j}\chi_{k}+2\sum_{k=1}^{5}(\partial_{x_k}\varepsilon)\eta\partial_{t}\chi_{k}.
\end{align*}
Next, we observe
\begin{multline*}
\int (\vec\chi \cdot \nabla \varepsilon) \left( f({\mathbf W} + \varepsilon) - f({\mathbf W}) \varepsilon\right) 
\\= \int \vec\chi \cdot\nabla ( F({\mathbf W} + \varepsilon) - F({\mathbf W})-f({\mathbf W}) \varepsilon )
- \int \vec \chi \cdot(\nabla {\mathbf W}) \left( f({\mathbf W} + \varepsilon) - f({\mathbf W})-f'({\mathbf W}) \varepsilon\right).
\end{multline*}
Integrating by parts and using~\eqref{derchi},
\begin{align*}
| -\int \vec\chi \cdot\nabla ( F({\mathbf W} + \varepsilon) - F({\mathbf W})-f({\mathbf W}) \varepsilon )|&=|\int_{\Omega}\left(\mathrm{div}\vec{\chi}\right)\left( F({\mathbf W} + \varepsilon) - F({\mathbf W})-f({\mathbf W}) \varepsilon \right)|\\
&\lesssim  \frac 1{(1-2\sigma) t} \int_{\Omega} |\left( F({\mathbf W} + \varepsilon) - F({\mathbf W})-f({\mathbf W}) \varepsilon\right)|.
\end{align*}
Thus, by~\eqref{eq:BS} and 
\[
\|{\mathbf W}\|_{L^{\frac{10}3}(\Omega)}\lesssim \sum_{k=1}^{K} \left(\|W_k\|_{L^{\frac{10}3}(\Omega)}+\|v_k\|_{\dot H^1}\right)
\lesssim t^{-\frac 32},
\]
we obtain
\begin{align*}
\left| \int \vec\chi \cdot\nabla ( F({\mathbf W} + \varepsilon) - F({\mathbf W})-f({\mathbf W}) \varepsilon )\right|
&\lesssim t^{-1} \int_\Omega \left(|\varepsilon|^{\frac {10}3} + {\mathbf W}^{\frac 43} |\varepsilon|^2 \right)\\
&\lesssim t^{-1}(\|\varepsilon\|^{\frac{10}{3}}_{L^{\frac{10}{3}}}+\|\varepsilon\|_{L^{\frac{10}{3}}}^{2}\|\mathbf{W}\|^{\frac{4}{3}}_{L^{\frac{10}{3}}(\Omega)})\\
&\lesssim t^{-1}\big((C_{0})^{\frac{10}{3}}t^{-10+\frac{10}{3}\delta}+(C_{0}^{2}t^{-8+2\delta}\big)\lesssim C_{0}t^{-7+2\delta}.
\end{align*}
Moreover, again by~\eqref{e:n40} and~\eqref{eq:BS}
\begin{align*}
&\left| \int \vec\chi \cdot\left(\nabla {\mathbf W} -\nabla \sum_{k=1}^{K} W_k\right) \left( f({\mathbf W} + \varepsilon) - f({\mathbf W})-f'({\mathbf W}) \varepsilon\right)\right|\\
&= \left| \int \vec\chi\cdot \nabla \left(\sum_{k=1}^{K} c_k v_k\right) \left( f({\mathbf W} + \varepsilon) - f({\mathbf W})-f'({\mathbf W}) \varepsilon\right)\right|\\
&\lesssim \sum_{k=1}^{K} \int |\nabla v_k| \left( |\varepsilon|^{2} {\mathbf W}^{\frac 13} + |\varepsilon|^{\frac 73}\right)\lesssim \left( \sum_{k=1}^{K} \|\nabla v_k\|_{L^\frac{10}3} \right) \|\varepsilon\|_{L^{\frac {10}3}}^2\lesssim (C_{0})^{2}t^{-8+2\delta}\lesssim C_{0}t^{-7+2\delta}.
\end{align*}
Next, integrating by parts, 
\[
2 \int (\vec\chi\cdot\nabla \varepsilon) {\rm Mod}_{{\mathbf X}} 
= -2 \int (\vec\chi\cdot\nabla{\rm Mod}_{{\mathbf X}}) \varepsilon + O\left(C_{0}t^{-7+2\delta}\right),
\]
since by~\eqref{eq:BS},~\eqref{Idem} and~\eqref{derchi}
\begin{align*}
\left|\int (\mathrm{div}\vec\chi)\varepsilon {\rm Mod}_{{\mathbf X}}\right|
\lesssim t^{-4+\delta} \int_{\Omega} |\varepsilon| \left( \sum_{k=1}^{K} |W_k|^{\frac 43}\right)&
\lesssim t^{-4+\delta}\|\varepsilon\|_{L^{\frac{10}{3}}}\sum_{k=1}^{K}\|W_{k}\|_{L^{\frac{40}{21}}(\Omega)}^{\frac{4}{3}}\\
&\lesssim C_{0}t^{-7+2\delta}.
\end{align*}
We finish the estimate of~$\bf g_2$ by observing that~\eqref{RWX},~\eqref{eq:BS} and ~\eqref{lemvz}  yield
\begin{align*}
&\left| \int (\vec\chi \cdot \nabla {\rm Mod}_{\mathbf{V}}) \eta \right|+\left|\int (\vec\chi \cdot\nabla \varepsilon){\rm Mod}_{\mathbf{Z}}\right|+\left|\int (\vec\chi \cdot\nabla \varepsilon) {{\mathbf R}}_{{\mathbf X}}\right|\\
&\lesssim \|\eta\|_{L^2} \|\nabla {\rm Mod}_{\mathbf{V}}\|_{L^2} 
+\|\nabla \varepsilon\|_{L^2}  \|{\rm{Mod}}_{{\mathbf Z}}\|_{L^2}
+ \|\nabla \varepsilon\|_{L^2} \|{{\mathbf R}}_{{\mathbf X}}\|_{L^2}
\lesssim C_{0}t^{-7+2\delta}.
\end{align*}

\smallbreak
Gathering the estimates on $\bf g_1$ and $\bf g_2$, we rewrite
\[\frac d{dt} \mathcal H = {\bf h_1}+{\bf h_2}+{\bf h_3}+{\bf h_4}+ O\left(C_{0}t^{-7+2\delta}\right),\]
where
\begin{align*}
{\bf h_1} &= \int(-\eta^{2}+|\nabla \varepsilon|^{2})\mathrm{div}\vec \chi -2\sum_{j,k=1}^{5}(\partial_{x_k}\varepsilon)(\partial_{x_j}\varepsilon)\partial_{x_j}\chi_{k}+2\sum_{k=1}^{5}(\partial_{x_k}\varepsilon)\eta\partial_{t}\chi_{k}, \\ 
{\bf h_2}& = 2 \int \left(\sum_k \left(\boldsymbol\ell_k - \vec\chi\right) \cdot\nabla W_k\right) \left( f({\mathbf W} + \varepsilon) - f({\mathbf W})-f'({\mathbf W}) \varepsilon\right),\\
{\bf h_3} &= 2 \int \eta\left({\rm Mod}_{{\mathbf X}} + \vec\chi\cdot \nabla {\rm Mod}_{{\mathbf W}}\right),\\
{\bf h_4} &= 2 \int \varepsilon \left( -\Delta {\rm Mod}_{{\mathbf W}} - \vec\chi \cdot\nabla {\rm Mod}_{{\mathbf X}}- f'({\mathbf W}) {\rm Mod}_{{\mathbf W}} \right).\end{align*}
\emph{Estimate on $\bf h_1$.}
We claim the following estimate
\begin{equation}\label{bf h_1}
-(1-2\sigma)t {\bf h_1} \le (6-4\delta+C\sigma){\mathcal N_{\Omega}}(t).
\end{equation}
Let
\begin{equation*}
{\rm{I}}=-(1-2\sigma)t\left((-\eta^{2}+|\nabla \varepsilon|^{2})\mathrm{div}\vec \chi -2\sum_{j,k=1}^{5}(\partial_{x_k}\varepsilon)(\partial_{x_j}\varepsilon)\partial_{x_j}\chi_{k}+2\sum_{k=1}^{5}(\partial_{x_k}\varepsilon)\eta\partial_{t}\chi_{k}\right).
\end{equation*}
To obtain \eqref{bf h_1}, we will actually prove the following stronger property
\begin{equation}\label{bf h_1 p.p}
{\rm{I}}\le (6-4\delta)\left(|\nabla \varepsilon|^2 + \eta^2 + 2 (\vec\chi\cdot\nabla \varepsilon )\eta\right) +C\sigma(|\nabla\varepsilon|^{2}+\eta^{2})\quad \forall  x\in \Omega(t).
\end{equation}
Without loss of generality, we consider the following five cases,
\\ \textbf{Case 1:} Let $x\in \Omega_{1,1}\times\Omega_{0,2}\times\Omega_{0,3}\times\Omega_{0,4}\times\Omega_{0,5}$. From \eqref{derchi}, we obtain
\begin{equation}\label{case 1}\left\{\begin{aligned}
& \partial_{x_1} \chi_{1}(t,x)= \frac{1}{(1-2\sigma)t},\quad \partial_{t} \chi_{1}(t,x)= -\frac {x_1}t \frac{1}{(1-2\sigma)t},\\
& \partial_t \chi_{i}(t,x) =0,\quad \nabla \chi_{i}(t,x)=0 \quad \forall i\ne 1.
\end{aligned} \right.\end{equation}
From direct computations and \eqref{case 1}, we obtain
\begin{equation}\label{case 1 p.p}
\begin{aligned}
{\rm{I}}&=\eta^{2}-|\nabla\varepsilon|^{2}+2|\partial_{x_1}\varepsilon|^{2}+2(6-4\delta)(\vec{\chi}\cdot\nabla \varepsilon)\eta-2(5-4\delta)(\partial_{x_1}\varepsilon\cdot\chi_{1})\eta\\
&\ \ -2(6-4\delta)\sum_{i=2}^{5}(\partial_{x_i}\varepsilon\cdot\chi_{i})\eta+2\eta\partial_{x_1}\varepsilon\left(\frac{x_{1}}{t}-\chi_{1}\right).
\end{aligned}
\end{equation}
Finally, using \eqref{spe boud}, \eqref{bd}, \eqref{defchiK}, \eqref{case 1 p.p} and Cauchy-Schwarz inequality, we obtain
\begin{align*}
{\rm{I}} &\le \eta^{2}+|\nabla\varepsilon|^{2}+2(6-4\delta)(\vec{\chi}\cdot\nabla \varepsilon)\eta+C\sigma(|\nabla\varepsilon|^{2}+\eta^{2})\\
& \ \ +(6-4\delta)|\overline{\ell}|\left(\frac{5-4\delta}{|\overline{\ell}|(6-4\delta)}|\nabla\varepsilon|^{2}+\frac{|\overline{\ell}|(6-4\delta)}{(5-4\delta)}\eta^{2}\right)\\
& \le (6-4\delta)\left(|\nabla\varepsilon|^{2}+\eta^{2}+2(\vec{\chi}\cdot\nabla \varepsilon)\eta\right)+C\sigma(|\nabla\varepsilon|^{2}+\eta^{2}).
\end{align*}
 \textbf{Case 2:} Let $x\in \Omega_{1,1}\times\Omega_{1,2}\times\Omega_{0,3}\times\Omega_{0,4}\times\Omega_{0,5}$. From \eqref{derchi}, we obtain
\begin{equation}\label{case 2}\left\{\begin{aligned}
& \partial_{x_i} \chi_{i}(t,x)= \frac{1}{(1-2\sigma)t},\quad \partial_{t} \chi_{i}(t,x)= -\frac {x_i}t \frac{1}{(1-2\sigma)t}\quad \mathrm{for}\ i\in \{1,2\},\\
& \partial_t \chi_{i}(t,x) =0,\quad \nabla \chi_{i}(t,x)=0 \quad \mathrm{for}\ i\in \{3,4,5\}.
\end{aligned} \right.\end{equation}
From direct computations and \eqref{case 2}, we obtain
\begin{equation}\label{case 2 p.p}
\begin{aligned}
{\rm{I}}&=2\eta^{2}-2|\nabla\varepsilon|^{2}+2\sum_{i=1}^{2}|\partial_{x_i}\varepsilon|^{2}+2(6-4\delta)\left(\vec{\chi}\cdot\nabla \varepsilon\right)\eta-2(5-4\delta)\sum_{i=1}^{2}\left(\partial_{x_i}\varepsilon\cdot\chi_{i}\right)\eta\\
&\ \ -2(6-4\delta)\sum_{i=3}^{5}\left(\partial_{x_i}\varepsilon\cdot\chi_{i}\right)\eta+2\sum_{i=1}^{2}\eta\partial_{x_i}\varepsilon\left(\frac{x_{i}}{t}-\chi_{i}\right).
\end{aligned}
\end{equation}
Finally, using~\eqref{spe boud},~\eqref{bd},~\eqref{defchiK},~\eqref{case 2 p.p} and Cauchy-Schwarz inequality, we obtain
\begin{align*}
{\rm{I}}&\le 2\eta^{2}+2(6-4\delta)\left(\vec{\chi}\cdot\nabla \varepsilon\right)\eta+(6-4\delta)|\overline{\ell}|\left(\frac{1}{|\overline{\ell}|}|\nabla\varepsilon|^{2}+|\overline{\ell}|\eta^{2}\right)+C\sigma\left(|\nabla\varepsilon|^{2}+\eta^{2}\right)\\
&\le (6-4\delta)\left(|\nabla\varepsilon|^{2}+\eta^{2}+2(\vec{\chi}\cdot\nabla \varepsilon)\eta\right)+C\sigma\left(|\nabla\varepsilon|^{2}+\eta^{2}\right).
\end{align*}
\textbf{Case 3:} Let $x\in \Omega_{1,1}\times\Omega_{1,2}\times\Omega_{1,3}\times\Omega_{0,4}\times\Omega_{0,5}$. From \eqref{derchi}, we obtain
\begin{equation}\label{case 3}\left\{\begin{aligned}
& \partial_{x_i} \chi_{i}(t,x)= \frac{1}{(1-2\sigma)t},\quad \partial_{t} \chi_{i}(t,x)= -\frac {x_i}t \frac{1}{(1-2\sigma)t}\quad \mathrm{for}\ i\in \{1,2,3\},\\
& \partial_t \chi_{i}(t,x) =0,\quad \nabla \chi_{i}(t,x)=0 \quad \mathrm{for}\ i\in \{4,5\}.
\end{aligned} \right.\end{equation}
From direct computations and \eqref{case 3}, we obtain
\begin{equation}\label{case 3 p.p}
\begin{aligned}
{\rm{I}}&=3\eta^{2}-3|\nabla\varepsilon|^{2}+2\sum_{i=1}^{3}|\partial_{x_i}\varepsilon|^{2}+2(6-4\delta)\left(\vec{\chi}\cdot\nabla \varepsilon\right)\eta\\
&\ \ -2(5-4\delta)\sum_{i=1}^{3}\left(\partial_{x_i}\varepsilon\cdot\chi_{i}\right)\eta-2(6-4\delta)\sum_{i=4}^{5}
\left(\partial_{x_i}\varepsilon\cdot\chi_{i}\right)\eta+2\sum_{i=1}^{3}\eta\partial_{x_i}\varepsilon\left(\frac{x_{i}}{t}-\chi_{i}\right).
\end{aligned}
\end{equation}
Finally, using~\eqref{spe boud},~\eqref{bd},~\eqref{defchiK},~\eqref{case 3 p.p} and Cauchy-Schwarz inequality, we obtain
\begin{align*}
{\rm{I}}&\le 3\eta^{2}-|\nabla \varepsilon|^{2}+2(6-4\delta)(\vec{\chi}\cdot\nabla \varepsilon)\eta+C\sigma(|\nabla\varepsilon|^{2}+\eta^{2})\\
&+(6-4\delta)|\overline{\ell}|\left(\frac{(7-4\delta)}{(6-4\delta)|\overline{\ell}|}|\nabla\varepsilon|^{2}+\frac{(6-4\delta)|\overline{\ell}|}{(7-4\delta)}\eta^{2}\right)\\
&\le (6-4\delta)\left(|\nabla\varepsilon|^{2}+\eta^{2}+2(\vec{\chi}\cdot\nabla \varepsilon)\eta\right)+C\sigma\left(|\nabla\varepsilon|^{2}+\eta^{2}\right).
\end{align*}
\\ \textbf{Case 4:} Let $x\in \Omega_{1,1}\times\Omega_{1,2}\times\Omega_{1,3}\times\Omega_{1,4}\times\Omega_{0,5}$. From \eqref{derchi}, we obtain
\begin{equation}\label{case 4}\left\{\begin{aligned}
& \partial_{x_i} \chi_{i}(t,x)= \frac{1}{(1-2\sigma)t},\quad \partial_{t} \chi_{i}(t,x)= -\frac {x_i}t \frac{1}{(1-2\sigma)t}\quad \mathrm{for}\ i\in \{1,2,3,4\},\\
& \partial_t \chi_{i}(t,x) =0,\quad \nabla \chi_{i}(t,x)=0 \quad \mathrm{for}\ i=5.
\end{aligned} \right.\end{equation}
From direct computations and \eqref{case 4}, we obtain
\begin{equation}\label{case 4 p.p}
\begin{aligned}
{\rm{I}}&=4\eta^{2}-4|\nabla\varepsilon|^{2}+2\sum_{i=1}^{4}|\partial_{x_i}\varepsilon|^{2}+2(6-4\delta)(\vec{\chi}\cdot\nabla \varepsilon)\eta-2(5-4\delta)\sum_{i=1}^{4}(\partial_{x_i}\varepsilon\cdot\chi_{i})\eta\\
&\ \ -2(6-4\delta)(\partial_{x_5}\varepsilon\cdot\chi_{5})\eta+2\sum_{i=1}^{4}\eta\partial_{x_i}\varepsilon\left(\frac{x_{i}}{t}-\chi_{i}\right).
\end{aligned}
\end{equation}
Finally, using~\eqref{spe boud},~\eqref{bd},~\eqref{defchiK},~\eqref{case 4 p.p} and Cauchy-Schwarz inequality, we obtain
\begin{align*}
{\rm{I}}&\le 4\eta^{2}-2|\nabla \varepsilon|^{2}+2(6-4\delta)(\vec{\chi}\cdot\nabla \varepsilon)\eta+C\sigma(|\nabla\varepsilon|^{2}+\eta^{2})\\
& +(6-4\delta)|\overline{\ell}|\left(\frac{(8-4\delta)}{(6-4\delta)|\overline{\ell}|}|\nabla\varepsilon|^{2}
+\frac{|\overline{\ell}|(6-4\delta)}{(8-4\delta)}\eta^{2}\right)\\
&\le (6-4\delta)\left(|\nabla\varepsilon|^{2}+\eta^{2}+2(\vec{\chi}\cdot\nabla \varepsilon)\eta\right)+C\sigma\left(|\nabla\varepsilon|^{2}+\eta^{2}\right).
\end{align*}
\textbf{Case 5:} Let $x\in \Omega_{1,1}\times\Omega_{1,2}\times\Omega_{1,3}\times\Omega_{1,4}\times\Omega_{1,5}$. From \eqref{derchi}, we obtain
\begin{equation}\label{case 5}
 \partial_{x_i} \chi_{i}(t,x)= \frac{1}{(1-2\sigma)t},\quad \partial_{t} \chi_{i}(t,x)= -\frac {x_i}t \frac{1}{(1-2\sigma)t}\quad \mathrm{for}\ i\in \{1,2,3,4,5\}.
\end{equation}
From direct computations and \eqref{case 5}, we obtain
\begin{equation}\label{case 5 p.p}
\begin{aligned}
{\rm{I}}&=5\eta^{2}-5|\nabla\varepsilon|^{2}+2\sum_{i=1}^{5}|\partial_{x_i}\varepsilon|^{2}+2(6-4\delta)(\vec{\chi}\cdot\nabla \varepsilon)\eta\\
&\ \ -2(5-4\delta)\left(\nabla\varepsilon\cdot\vec{\chi}\right)\eta+2\sum_{i=1}^{5}\eta\partial_{x_i}\varepsilon\left(\frac{x_{i}}{t}-\chi_{i}\right).
\end{aligned}
\end{equation}
Finally, using~\eqref{spe boud},~\eqref{bd},~\eqref{defchiK},~\eqref{case 3 p.p} and Cauchy-Schwarz inequality, we obtain
\begin{align*}
{\rm{I}}&\le 5\eta^{2}-3|\nabla \varepsilon|^{2}+2(6-4\delta)(\vec{\chi}\cdot\nabla \varepsilon)\eta+C\sigma(|\nabla\varepsilon|^{2}+\eta^{2})\\
& +(5-4\delta)|\overline{\ell}|\left(\frac{(9-4\delta)}{|\overline{\ell}|(5-4\delta)}|\nabla\varepsilon|^{2}+\frac{|\overline{\ell}|(5-4\delta)}{(9-4\delta)}\eta^{2}\right)\\
&\le(6-4\delta)\left(|\nabla\varepsilon|^{2}+\eta^{2}+2(\vec{\chi}\cdot\nabla \varepsilon)\eta\right)+C\sigma\left(|\nabla\varepsilon|^{2}+\eta^{2}\right).
\end{align*}

\emph{Estimate on $\bf h_2$.} We observe that by the definition of $\vec{\chi}$ in~\eqref{derchi} and the decay of $\nabla W$ and $W$,
\begin{align*}
& \left\| \left(\boldsymbol\ell_k - \vec\chi \right)\cdot \nabla W_k \right\|_{L^{\frac{10}3}}\lesssim t^{-\frac 52}. 
\end{align*}
Thus, by~\eqref{holder1},
$ |{\bf h_2}|\lesssim t^{-\frac 52} \|\varepsilon\|_{L^{\frac{10}3}}^2 
\lesssim C_{0}t^{-\frac {17}2+2\delta}\lesssim C_{0}t^{-7+2\delta}.
$

\emph{Estimate on $\bf h_3$.} Denote
\[
M_k = \left(\frac {\dot \lambda_k }{\lambda_k} - \frac{a_k}{\lambda_k^{\frac 12}t^2}\right)\Lambda W_k + \dot{\mathbf{y}}_k\cdot \nabla W_k \]
so that
${\rm Mod}_{{\mathbf W}} = \sum_k M_k$ and ${\rm Mod}_{{\mathbf X}} = - \sum_k \boldsymbol\ell_k \cdot\nabla M_k$.
Using~\eqref{e:WX} and the definition of $\vec{\chi}$ (see~\eqref{derchi}), we have 
$\|(\boldsymbol\ell_k-\vec\chi)\cdot\nabla M_k\|_{L^2}\lesssim t^{-\frac 92+\delta}$. 
It follows from~\eqref{le:p} that
\[
\left\|{\rm Mod}_{{\mathbf X}} + \vec\chi \cdot \nabla {\rm Mod}_{{\mathbf W}}\right\|_{L^2} \lesssim t^{-\frac 92+\delta},
\]
and thus
\begin{align*}
|{\bf h_3}|=\left| \int \eta\left({\rm Mod}_{{\mathbf X}} + \vec\chi\cdot \nabla {\rm Mod}_{{\mathbf W}}\right) \right|
\lesssim t^{-\frac 92+\delta} \|\eta\|_{L^2} \lesssim C_{0}t^{-7+2\delta}.
\end{align*}

\emph{Estimate on $\bf h_4$.}
By~(i) of Lemma~\ref{surZZ},
$
-\Delta M_k + (\boldsymbol{\ell_{k}}\cdot\nabla)(\boldsymbol{\ell_{k}}\cdot\nabla) M_k - f'(W_k) M_k=0. 
$ Thus,
\begin{align*}
&\left| -\Delta M_k + (\vec\chi\cdot\nabla)\cdot (\boldsymbol{\ell_{k}}\cdot\nabla) M_k - f'({\mathbf W}) M_k\right|\\
&\lesssim \left| ((\vec\chi-\boldsymbol{\ell_{k}})\cdot\nabla)\cdot (\boldsymbol{\ell_{k}}\cdot\nabla) M_k\right| + \left|f'({\mathbf W})-f'(W_k)\right| |M_k|.
\end{align*}
As before, by~\eqref{e:WX},
$
\left\| ((\vec\chi-\boldsymbol{\ell_{k}})\cdot\nabla)\cdot (\boldsymbol{\ell_{k}}\cdot\nabla) M_k\right\|_{L^{\frac{10}7}}\lesssim t^{-\frac 92+\delta}$.
Moreover, by~\eqref{e:n40}
\begin{align*}
&\left| |{\mathbf W}|^{\frac 43} - |W_k|^{\frac 43}\right| |M_k|\\
& \lesssim C_{0}t^{-3+\delta}\left(\sum |W_k|+\sum |v_k|\right)^{\frac 13} \left(\sum_{k'\neq k}|W_{k'}| + \sum |v_k|\right) |W_k|\\
& \lesssim C_{0}t^{-3+\delta}\Bigg[\sum_{k'\neq k''} |W_{k'}|^{\frac 43} |W_{k''}|+t^{-2+\delta} \sum_{k'} |W_{k'}|^{2}\\
&\quad \quad \quad \quad \quad \quad +t^{-\frac{2}{3}+\frac{\delta}{3}} \sum_{k'\neq k''} |W_{k'}|^{1+\frac 29} |W_{k''}| +t^{-\frac{8}{3}+\frac{4}{3}\delta} \sum_{k'} |W_{k'}|^{\frac{17}9} \Bigg],
\end{align*}
and from  Claim~\ref{WW},
\[
\left\| \left(f'({\mathbf W})-f'(W_k)\right) M_k\right\|_{L^{\frac {10}7}}\lesssim t^{-5+2\delta}.
\]
Therefore, using~\eqref{le:p},
\[
\left\| -\Delta {\rm Mod}_{{\mathbf W}} - \vec{\chi}\cdot\nabla {\rm Mod}_{{\mathbf X}}- f'({\mathbf W}) {\rm Mod}_{{\mathbf W}} \right\|_{L^{\frac{10}7}}
\lesssim t^{-\frac 92+\delta}.
\]
It follows that by~\eqref{eq:BS},
\[
|{\bf h_4}|\lesssim \|\varepsilon\|_{L^{\frac{10}3}} \left\| -\Delta {\rm Mod}_{{\mathbf W}} -\vec{\chi}\cdot \nabla {\rm Mod}_{{\mathbf X}}- f'({\mathbf W}) {\rm Mod}_{{\mathbf W}} \right\|_{L^{\frac{10}7}}
\lesssim C_{0} t^{-7+2\delta}.
\]

In conclusion, using~\eqref{coer}, for $\sigma$ small, and $T_0$ large,
\[
-\frac d{dt} \mathcal H
\leq \frac {(6-4\delta+C \sigma)} t {\mathcal N_{\Omega}} + O(C_{0}t^{-7+2\delta})
\leq \frac{6-3\delta}{t} \mathcal H+ O(C_{0}t^{-7+2\delta})
\]
and the proof of Lemma~\ref{mainprop} is complete.
\end{proof}
\subsection{Parameters and energy estimates}\label{s4}
The following result, mainly based on Lemma~\ref{mainprop}, improves all the estimates in~\eqref{eq:BS}, except
the ones on $(z_k^-)_k$.
\begin{lemma}[Closing estimates except $(z_k^-)_k$]\label{le:bs1}
	For $C_0>0$ large enough, for all $t\in[T^*,S_n]$,
	\begin{equation}\label{eq:BSd}\left\{\begin{aligned}
	& |\lambda_k(t)-\lambda_k^\infty|\leq \frac {C_0} 2t^{-1},\quad |\mathbf{y}_k(t)-\mathbf{y}_k^\infty|\leq \frac {C_0}2t^{-1},\\
	&|z_k^+(t)|^2\leq \frac 12 t^{-7},\quad 
	\left\|\vec \varepsilon(t)\right\|_{\dot H^1\times L^2}\leq \frac{C_{0}}{2} t^{-3+\delta}.\end{aligned}\right. \end{equation}
\end{lemma}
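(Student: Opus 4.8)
The plan is to integrate the differential inequalities of Lemma~\ref{mainprop}, \eqref{le:p} and \eqref{le:z} backwards from $S_n$, using the initial bounds of Claim~\ref{le:modu2}, and to close the bootstrap by fixing the constants in the order: $\delta$ (as in \eqref{bd}), then $C_0$ large, then $T_0$ large depending on $C_0$. On $[T^*,S_n]$ the decomposition of Lemma~\ref{le:4} is valid and \eqref{eq:BS} holds, so Lemma~\ref{mainprop} applies; the implicit constants below are independent of $C_0,T_0,n,t$.

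\emph{Estimate on $\vec\varepsilon$.} Integrating \eqref{time} on $[t,S_n]$ gives $t^{6-3\delta}\mathcal H(t)\le S_n^{6-3\delta}\mathcal H(S_n)+\frac{C_0}{\mu\delta}t^{-\delta}$. By \eqref{boun} and \eqref{modu3}, $|\mathcal H(S_n)|\lesssim\|\vec\varepsilon(S_n)\|_{\dot H^1\times L^2}^2\lesssim S_n^{-7}$, hence $S_n^{6-3\delta}|\mathcal H(S_n)|\lesssim S_n^{-1-3\delta}\le t^{-\delta}$, so that $\mathcal H(t)\lesssim C_0 t^{-6+2\delta}$. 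Together with \eqref{coer}, $\mu\|\vec\varepsilon(t)\|_{\dot H^1\times L^2}^2\le\mathcal H(t)+\mu^{-1}t^{-7}\lesssim C_0 t^{-6+2\delta}$, i.e. $\|\vec\varepsilon(t)\|_{\dot H^1\times L^2}\le C_1\sqrt{C_0}\,t^{-3+\delta}$ with $C_1$ independent of $C_0$; choosing $C_0\ge 4C_1^2$ improves this to $\|\vec\varepsilon(t)\|_{\dot H^1\times L^2}\le\frac{C_0}{2}t^{-3+\delta}$.

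\emph{Estimates on $\lambda_k,\mathbf y_k,z_k^+$.} From \eqref{le:p} and the bootstrap bound on $\vec\varepsilon$ we obtain $|\dot{\mathbf y}_k|\lesssim C_0 t^{-3+\delta}$ and, since $\lambda_k(t)\sim\lambda_k^\infty$ for $T_0$ large, $|\dot\lambda_k(t)-a_k\sqrt{\lambda_k(t)}\,t^{-2}|\lesssim C_0 t^{-3+\delta}$; integrating on $[t,S_n]$ and using \eqref{modu3} at $t=S_n$ yields $|\mathbf y_k(t)-\mathbf y_k^\infty|\lesssim C_0 t^{-2+\delta}+S_n^{-7/2}$ and $|\lambda_k(t)-\lambda_k^\infty|\le C_2 t^{-1}+C_3(C_0 t^{-2+\delta}+S_n^{-7/2})$, where $C_2 t^{-1}$ is the contribution of $\int_t^{S_n}|a_k|\sqrt{\lambda_k(s)}\,s^{-2}\,ds$ and $C_2,C_3$ are fixed constants. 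Since $C_2$ does not depend on $C_0$, enlarging $C_0$ so that $C_2\le\frac{C_0}{8}$ and then taking $T_0$ large (using $S_n\ge t$ and $-2+\delta<-1$) gives $|\lambda_k(t)-\lambda_k^\infty|\le\frac{C_0}{2}t^{-1}$ and $|\mathbf y_k(t)-\mathbf y_k^\infty|\le\frac{C_0}{2}t^{-1}$. For $z_k^+$, set $\nu_k(t)=\frac{\sqrt{\lambda_0}}{\lambda_k(t)}(1-|\bell_k|^2)^{1/2}$, which is bounded below by a fixed $\nu_0>0$ for $T_0$ large; by \eqref{le:z} and the bound on $\vec\varepsilon$, writing $R_k=\frac{d}{dt}z_k^+-\nu_k z_k^+$ one has $|R_k(t)|\lesssim C_0^2 t^{-4+\delta}$, and using the integrating factor together with $z_k^+(S_n)=0$ from \eqref{modu:2}, $|z_k^+(t)|\le\int_t^{S_n}e^{-\nu_0(s-t)}|R_k(s)|\,ds\lesssim C_0^2 t^{-4+\delta}$; since $-4+\delta<-\frac72$, taking $T_0$ large gives $|z_k^+(t)|^2\le\frac12 t^{-7}$. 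This proves \eqref{eq:BSd}.

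\emph{Main obstacle.} The only delicate point is the order and dependence of the constants: the $\vec\varepsilon$-estimate must be closed by choosing $C_0$ large \emph{independently} of $T_0$ (it uses the full strength of Lemma~\ref{mainprop}), whereas the estimates on $\lambda_k,\mathbf y_k,z_k^+$ are closed by taking $T_0$ large \emph{depending on} $C_0$. In particular one must observe that the genuine $O(t^{-1})$ drift of $\lambda_k$ produced by the modulation term $a_k t^{-2}$ carries a constant independent of $C_0$, hence is absorbed into $\frac{C_0}{2}t^{-1}$.
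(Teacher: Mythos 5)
Your proposal is correct and follows essentially the same route as the paper: integrate the monotonicity estimate \eqref{time} from $S_n$ with the data \eqref{modu3} and combine with coercivity \eqref{coer} to close $\vec\varepsilon$ by choosing $C_0$ large independently of $T_0$; integrate \eqref{le:p} to close $\lambda_k,\mathbf y_k$ (isolating the $a_k t^{-2}$ drift whose contribution is $O(t^{-1})$ with constant independent of $C_0$); and integrate \eqref{le:z} with the final condition $z_k^+(S_n)=0$ via an integrating factor to close $z_k^+$. Your remarks on the order in which $\delta$, $C_0$ and $T_0$ are fixed match the paper's (implicit) logic.
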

\begin{proof}
	\textbf{Step 1.} Estimate on $(\lambda_{k}(t),\mathbf{y}_{k}(t))_{k\in\{1,\dots,K\}}$.
	From~\eqref{le:p} and~\eqref{eq:BS}, we have
	\begin{equation}\label{dly}
\left|\frac {\dot \lambda_k}{\lambda_k}\right|+|\dot {\mathbf y}_k|\leq C t^{-2}
\end{equation}
	where the constant $C$ depends on the parameters of the $K$ solitons, but not on $C_0$. Using ~\eqref{modu3} and ~\eqref{dly} and taking $C_{0}$ large enough, we obtain
	\begin{align*}
|\lambda_{k}(t)-\lambda_{k}^{\infty}|&\le|\lambda_{k}(t)-\lambda(S_{n})|+|\lambda(S_{n})-\lambda_{k}^{\infty}|\\
& \le |\int_{t}^{S_{n}}\dot{\lambda_{k}}(s)ds|+\frac{1}{S^{\frac{7}{2}}_{n}}\le \int_{t}^{S_{n}}\frac{C}{s^{2}}ds+\frac{1}{S^{\frac{7}{2}}_{n}}\le \frac{C_{0}}{2t}.
\end{align*}
Using again ~\eqref{modu3} and ~\eqref{dly} and taking $C_{0}$ large enough, we obtain
\begin{align*}
|\mathbf{y}_{k}(t)-\mathbf{y}_{k}^{\infty}|&\le|\mathbf{y}_{k}(t)-\mathbf{y}(S_{n})|+|\mathbf{y}(S_{n})-\mathbf{y}_{k}^{\infty}|\\
& \le |\int_{t}^{S_{n}}\dot{\mathbf{y}_{k}}(s)ds|+\frac{1}{S^{\frac{7}{2}}_{n}}\le \int_{t}^{S_{n}}\frac{C}{s^{2}}ds+\frac{1}{S^{\frac{7}{2}}_{n}}\le \frac{C_{0}}{2t}.
\end{align*}

\textbf{Step 2.} Bound on the energy norm. From ~\eqref{modu3} and ~\eqref{boun}, we have
\begin{equation}\label{bHS}
\mathcal H(S_n) \lesssim {S_n^{-7}}.
\end{equation}
Integrating~\eqref{time} on $[t,S_n]$, and using \eqref{bHS}, we obtain,
\begin{align*}
t^{6-3\delta}\mathcal{H}(t)&=-\int_{t}^{S_{n}}\frac{d}{ds}(s^{6-3\delta}\mathcal{H}(s))ds+S_{n}^{6-3\delta}\mathcal{H}(S_{n})\lesssim \int_{t}^{S_{n}}\frac{C_{0}}{s^{1+\delta}}ds\lesssim\frac{C_{0}}{t^{\delta}}+\frac{1}{S_{n}^{1+3\delta}}.
\end{align*}
It follows that, $\mathcal{H}(t)\lesssim C_{0}t^{-6+2\delta}$. Using~\eqref{coer}, we conclude that
$
\|\vec \varepsilon\|_{\dot H^1\times L^2} \lesssim \sqrt{C_{0}}t^{-3+\delta}$. The estimate follows for $C_{0}$ large enough.

\textbf{Step 3.} Estimate on $\big(z^{+}_{k}(t)\big)_{k\in\{1,\dots,K\}}$. Let $\beta_k=\frac{\sqrt{\lambda_0}}{\lambda_k^{\infty}}(1-|{\boldsymbol{\ell}}_k|^2)^{1/2}>0$.
Then, from~\eqref{le:z} and~\eqref{eq:BS}, 
\begin{equation}\label{bdz}
\frac d{dt} \left( e^{-\beta_k t} z_k^+\right)\lesssim C_{0} e^{-\beta_k t} t^{-4+\delta}. 
\end{equation}
Integrating ~\eqref{bdz} on $[t,S_n]$, and using~\eqref{modu:2}, we obtain
$- z_k^+(t) \lesssim {t^{-4+\delta}}$. Using the same estimate for $-e^{-\beta_{k}t} z_{k}^{+}$, we obtain the conclusion for $T_{0}$ large enough.
	\end{proof}
As in~\cite{CMM,CMkg,MMwave1, MMwave2}, the parameters $z_k^-$ require a specific argument.
\begin{lemma}[Control of unstable directions in~\cite{MMwave2}]\label{le:bs2}
	There exist 
	$(\xi_{k,n})_{k}\in \mathcal B_{{\mathbb{R}}^K}(S_n^{-7/2})$ such that, for $C^*>0$ large enough, $T^*((\xi_{k,n})_k)=T_0$.
	In particular, let $(\zeta_n^\pm)$ be given by Claim~\ref{le:modu2} from such $(\xi_{k,n})_{k}$,
	then the solution $u_n$ of~\eqref{defun} satisfies~\eqref{eq:un}.
\end{lemma}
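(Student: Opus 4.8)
The plan is to run the standard Brouwer-type topological argument, using the outgoing flux property of the unstable modes $z_k^-$ on the boundary sphere $\mathcal S_{{\mathbb{R}}^K}(S_n^{-7/2})$. First I would fix $C_0$ large enough so that Lemma~\ref{le:bs1} applies; then for each choice of $(\xi_k)_k\in\overline{\mathcal B}_{{\mathbb{R}}^K}(S_n^{-7/2})$, Claim~\ref{le:modu2} produces the initial data $(\zeta_{k,n}^\pm)_k$ so that the decomposition at $S_n$ satisfies $z_k^+(S_n)=0$, $z_k^-(S_n)=\xi_k$, and the $\dot H^1\times L^2$-smallness~\eqref{modu3}, hence~\eqref{hyp:4} at $t=S_n$ and (by continuity) on a left-neighbourhood. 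On $[T^*,S_n]$ all the bootstrap bounds~\eqref{eq:BS} hold by definition of $T^*$, and Lemma~\ref{le:bs1} strictly improves every estimate in~\eqref{eq:BS} except the one on $(z_k^-)_k$. So the only way $T^*>T_0$ can occur is if $\sum_k |z_k^-(T^*)|^2 = (T^*)^{-7}$, i.e. the vector $(z_k^-(T^*))_k$ reaches the sphere $\mathcal S_{{\mathbb{R}}^K}((T^*)^{-7/2})$.

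The key dynamical input is the differential inequality~\eqref{le:z}: with $\beta_k=\tfrac{\sqrt{\lambda_0}}{\lambda_k}(1-|\bell_k|^2)^{1/2}>0$, and using $\|\vec\varepsilon\|_{\dot H^1\times L^2}\lesssim C_0 t^{-3+\delta}\ll t^{-7/2}$ together with $t^{-4+\delta}\ll t^{-7/2}$ on $[T_0,S_n]$, one gets at any time $t$ where $\sum_k|z_k^-(t)|^2=t^{-7}$ that
\begin{equation*}
\frac 12 \frac d{dt}\Big(\sum_k |z_k^-|^2 - t^{-7}\Big)\Big|_{t}
= \sum_k z_k^-\Big(-\beta_k z_k^- + O(\|\vec\varepsilon\|^2 + t^{-1}\|\vec\varepsilon\| + t^{-4+\delta})\Big) + \tfrac 72 t^{-8}
\le -\tfrac 12\min_k\beta_k \, t^{-7} < 0
\end{equation*}
for $T_0$ large. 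Hence the function $t\mapsto \sum_k |z_k^-(t)|^2 - t^{-7}$ is strictly negative on a right-neighbourhood of any of its zeros, which means that once $(z_k^-(t))_k$ touches the sphere, it strictly exits the closed ball for slightly larger times; transversality of the flow at the boundary follows. This is the standard mechanism; the map
\begin{equation*}
(\xi_k)_k \in \overline{\mathcal B}_{{\mathbb{R}}^K}(S_n^{-7/2}) \ \longmapsto\ \Big(\big(T^*((\xi_k)_k)\big)^{7/2}\, z_k^-\big(T^*((\xi_k)_k)\big)\Big)_k \in \mathcal S_{{\mathbb{R}}^K}(1)
\end{equation*}
(defined on the set where $T^*>T_0$) is then shown to be continuous, and, by transversality, to restrict to the identity on $\mathcal S_{{\mathbb{R}}^K}(S_n^{-7/2})$; this contradicts the no-retraction theorem for the ball onto its boundary. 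Therefore there exists at least one $(\xi_{k,n})_k\in\overline{\mathcal B}_{{\mathbb{R}}^K}(S_n^{-7/2})$ with $T^*((\xi_{k,n})_k)=T_0$.

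For this choice of $(\xi_{k,n})_k$, the bootstrap estimates~\eqref{eq:BS} hold on the whole interval $[T_0,S_n]$; in particular, combining with~\eqref{e:n40} and the expression of $\vec{\mathbf W}$ versus $\vec{\mathbf W}_n$, one reads off $\|\vec u_n(t)-\vec{\mathbf W}_n(t)\|_{\dot H^1\times L^2}\lesssim t^{-3+\delta}$, which is~\eqref{eq:un}; the $\dot H^2\times\dot H^1$ persistence and~\eqref{eq:deux} follow from~\eqref{bounds}, the improved parameter bounds in Step~1 of Lemma~\ref{le:bs1} and~\eqref{le:p}, and the uniform control~\eqref{cpq} is obtained as in~\cite{MMwave1} using~\eqref{eq:un}, the decay~\eqref{e:WX} of $\vec{\mathbf W}$ and the $\dot H^2$ bound. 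The only genuinely delicate point is the rigorous verification of transversality and continuity of $T^*$ in $(\xi_k)_k$ — i.e. that reaching the sphere really is a clean exit so that Brouwer applies — but since $\|\vec\varepsilon\|$ and $t^{-4+\delta}$ are both much smaller than $t^{-7/2}$ on $[T_0,S_n]$, the sign computation above makes this routine, exactly as in~\cite{CMM,MMwave1,MMwave2}, so I would simply cite those references for the topological step.
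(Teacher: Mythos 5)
Your proposal is correct and is essentially the argument the paper defers to (the proof of Lemma~5.8 in~\cite{MMwave2}, itself modeled on~\cite{CMM}): transversal outgoing behaviour of $(z_k^-)_k$ at the sphere $\mathcal S_{{\mathbb{R}}^K}(t^{-7/2})$, continuity of $T^*$, and Brouwer's no-retraction theorem, combined with Lemma~\ref{le:bs1} closing all the other bootstrap bounds. One small slip: the parenthetical claim $\|\vec\varepsilon\|_{\dot H^1\times L^2}\lesssim C_0t^{-3+\delta}\ll t^{-7/2}$ is false (indeed $t^{-3+\delta}\gg t^{-7/2}$), but what your displayed computation actually uses is that the full error in~\eqref{le:z}, namely $\|\vec\varepsilon\|^2+t^{-1}\|\vec\varepsilon\|+t^{-4+\delta}\lesssim C_0^2t^{-4+\delta}\ll t^{-7/2}$, is negligible against $\min_k\beta_k\,t^{-7/2}$, and that is true, so the transversality computation and the conclusion stand.
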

\begin{proof}
See proof of Lemma 5.8 in~\cite{MMwave2}.
\end{proof}
	
Estimates~\eqref{eq:un} follow directly from the estimates~\eqref{eq:BS} on $\varepsilon(t)$, $\lambda_k(t)$, $\mathbf{y}_k(t)$.

\subsection{End of the proof of Proposition~\ref{pr:s5}}\label{s5}
We refer to~\cite{MMwave2} for the proofs of the $\dot{H}^{2}\times\dot{H}^{1}$ bound and~\eqref{cpq} (\S\ref{s4} and \S\ref{s5} of~\cite{MMwave2}).

\end{document}